\newcommand{\RR}{\mathbb{R}}
\newcommand{\NN}{\mathbb{N}}
\newcommand{\EE}{\mathbb{E}}
\newcommand{\dom}{{\mathrm{dom}}\,} % domain
\newcommand{\dist}{{\mathrm{dist}}}
\newcommand{\cD}{{\mathcal{D}}}
\DeclareMathOperator*{\Min}{minimize}
\newcommand{\st}{\mbox{subject to}}
\newtheorem{theorem}{Theorem}[section]
\newtheorem{lemma}{Lemma}[section]
\newtheorem{definition}{Definition}[section]
\begin{document}

\title{Adaptively Sketched Bregman Projection Methods for Linear Systems}

\author{
Zi-Yang Yuan\thanks{Academy of Military Science of People's Liberation Army and Department of Mathematics, National University of Defense Technology.\texttt{yuanziyang11@nudt.edu.cn}}
\and Lu Zhang\thanks{Department of Mathematics, National University of Defense Technology,
Changsha, Hunan 410073, China.}
\and Hongxia Wang\thanks{
Department of Mathematics, National University of Defense Technology,
Changsha, Hunan, 410073, P.R.China. \texttt{wanghongxia@nudt.edu.cn}}
\and Hui Zhang\thanks{Corresponding author.
Department of Mathematics, National University of Defense Technology,
Changsha, Hunan 410073, China.  Email: \texttt{h.zhang1984@163.com}
}
}

%\address[rvt]{Department of Mathematics and Systems Science,
%College of Science, National University of Defense Technology,
%Changsha, Hunan, 410073, P.R.China}
%\address[els]{Department of Mathematics, University of Iowa, Iowa City, IA 52242, USA}
%%\address[focal]{Yanwachi Main street 47, Changsha, Hunan, China}

\date{\today}

\maketitle

\begin{abstract}
The sketch-and-project, as a general archetypal algorithm for solving linear systems, unifies a variety of randomized iterative methods such as the randomized Kaczmarz and randomized coordinate descent. However, since it aims to find a least-norm solution from a linear system, the randomized sparse Kaczmarz can not be included. This motivates us to propose a more general framework, called sketched Bregman projection (SBP) method, in which we are able to find solutions with certain structures from linear systems. To generalize the concept of adaptive sampling to the SBP method, we show how the progress, measured by Bregman distance, of single step depends directly on a sketched loss function. Theoretically, we provide detailed global convergence results for the SBP method with different adaptive sampling rules. At last, for the (sparse) Kaczmarz methods, a group of numerical simulations are tested, with which we verify that the methods utilizing sampling Kaczmarz-Motzkin rule demands the fewest computational costs to achieve a given error bound comparing to the corresponding methods with other sampling rules.
\end{abstract}

\textbf{Keywords.} sketch-and-project, Bregman distance, Bregman projection, Kaczmarz method, sampling rule

\textbf{AMS subject classifications.} 90C25,  65K05.

%% \linenumbers

%% main text\
\section{Introduction}
 The Kaczmarz method \cite{Kaczmarz1937Angenaherte} and its randomized variant\cite{2009AVershyin} for solving large-scale linear systems
 \begin{equation}\label{LS}
 Ax=b,
 \end{equation}
 where $A\in \RR^{m\times n}$ and $b\in \RR^m$
recently become very popular, mainly due to their cheap per iteration cost and low total computational complexity. Typically, the randomized Kaczmarz method is designed for solving highly over-determined linear systems; in other words, the number of samples $m$ is much larger than the dimension of variable $n$. In each iteration $k$, the current iterate $x^k$ is projected onto a hyperplane formed by randomly selecting a row $i_k$ of the linear systems \eqref{LS}, that is to obtain $x^{k+1}$ in the following way
  \begin{equation}\label{rk}
x^{k+1}=\arg\min_{x\in \RR^n} \|x-x^k\|^2, \quad\st\quad A_{i_k:}x=b_{i_k},
 \end{equation}
 where $A_{i_k:}$ is the $i_k$-row of $A$ randomly selected at iteration $k$ and $b_{i_k}$ is the $i_k$-entry of $b$. Recently, remarkable progress of the Kaczmarz method has been made; see for example \cite{2009AVershyin,2016Linear,7025269,7032145,Liu2015An,Petra2015Randomized,bai20181on,popa2012on,needell2014paved,leventhal2010randomized,haddock2019greed,bai2018on}.

 On the one hand, a sparse variant of the randomized Kaczmarz method was studied in \cite{2016Linear} to recover sparse solutions of possibly under-determined linear systems namely, where the number of samples $m$ is allowed to be smaller than the dimension $n$. In each iteration $k$ of the randomized sparse Kaczmarz method, the current iterate $x^k$ is projected via certain Bregman's distance onto the random hyperplane $\{x:A_{i_k:}x=b_{i_k}\}$. That is to obtain $x^{k+1}$ by solving the following linear constrained optimization problem
   \begin{equation}\label{srk}
x^{k+1}=\arg\min_{x\in \RR^n} D_f^{x^k_*}(x^k, x), \quad\st\quad A_{i_k:}x=b_{i_k},
 \end{equation}
 where $f$, called generating function, is some strongly convex function used to induce the Bregman distance $D_f^{x^k_*}(x^k, x)$.  Different from the classical random Bregman projection method \cite{1997Legendre}, the function $f$ in the randomized sparse Kaczmarz method could be nonsmooth.
 If we take $f(x)=\frac{1}{2}\|x\|^2$, then $D_f^{x^k_*}(x^k, x)=\frac{1}{2}\|x-x^k\|^2$ and the update in \eqref{srk} reduces to that in \eqref{rk}. In this sense, the scheme \eqref{srk} generalizes the randomized Kaczmarz method. A remarkable advantage behind is that the Bregman distance could characterize different geometries via choosing different generating functions $f$. For example, in the randomized sparse Kaczmarz method, the convex function
 $f(x)=\frac{1}{2}\|x\|^2+\lambda\|x\|_1$ is used to produce sparse solution $x^{k}$. See Figure \ref{geometric} as an example. Assume the generating function $f$ of the Bregman projection is $\frac{1}{2}\|x\|^2+\|x\|_1$. The pre-image of the Euclidean projection point $(1,0)$ is a line with measure $0$ in the $\text{x-y}$ plane. However, the Pre-image of Bregman projection point $(1,0)$ is the blue region. So comparing to the Euclidean projection, the Bregman projection is more capable to generate the sparse solution.

On the other hand, by replacing the random hyperplane with a random sketch of the original system and introducing the $B$-norm given by $\|\cdot\|_B:=\sqrt{\langle \cdot, B\cdot\rangle}$ where $B$ is a symmetric positive definite matrix, Gower and Richt$\acute{a}$rik in \cite{Gower2015} proposed the sketch-and-project framework
   \begin{equation}\label{sprk}
x^{k+1}=\arg\min_{x\in \RR^n} \|x^k- x\|_B^2, \quad\st\quad S^{\top}_{i_k}Ax=S^{\top}_{i_k}b,
 \end{equation}
where $S_{i_k}$ are random matrices drawn in an independent and identically distributed (i.i.d) fashion in each iteration, and matrix $B$ is a user-defined symmetric positive matrix used to define the geometry of the space. By varying these two parameters $S$ and $B$, the authors of \cite{Gower2015} can recover many existing variants of the Kaczmarz method as special cases, including the randomized Kaczmarz method, randomized Newton method, randomized coordinate descent method, random Gaussian pursuit, and variants of all these methods using blocks and important sampling. However, the $B$-norm is not general enough to capture \textsl{non-smooth} geometry like sparsity, and hence is impossible to recover sparse solutions. Such limitation could be overcome by utilizing the Bregman distance with a suitable generating function. This is the main motivation of our study in this paper.

Now, blending the Bregman projection scheme \eqref{srk} and the sketch-and-project method \eqref{sprk}, we propose a more general unified framework, called sketched Bregman projection (SBP) method, to solving linear systems. It updates $x^{k}$ via
\begin{equation}\label{urk}
x^{k+1}=\arg\min_{x\in \RR^n} D_f^{x^k_*}(x^k, x), \quad\st\quad S^{\top}_{i_k}Ax=S^{\top}_{i_k}b.
 \end{equation}
Specially noticing that when $f=\|x\|_{B}^2$ with $B$ being a symmetric positive definite matrix, \eqref{urk} reduces to \eqref{sprk}. At each iteration, when $i_k$ is sampled with a fixed probability, we will show that SBP method can have a linear convergence rate. At the same time, an adaptive sampling rule is also introduced. By tuning parameters of the adaptive sampling rule, many existing sampling rules such as max-distance, proportional to the sketch loss rule\cite{Gower2019}, capped sampling rule\cite{bai2018on} can be recovered. Moreover a new sampling rule called Sketch-Motzkin rule is proposed which can derive the sampling Kaczmarz-Motzkin rule in \cite{haddock2019greed}. Theoretical results demonstrate that the SBP methods equipped with these adaptive sampling rules above can have a faster convergence rate. Furthermore, we apply the theoretical results of SBP method to the Kaczmarz method and sparse Kaczmarz method to recover many existing results, recently obtained in \cite{2016Linear,2009AVershyin,haddock2019greed,bai2018on,Gower2019,Maxdis,Yuan2021}. Numerical simulations are tested to demonstrate that the sparse Kaczmarz method utilizing the sampling Kaczmarz-Motzkin rule has the least burden to achieve a given error bound.

In the next section, we recall some basic properties about the Bregman distances and projections. In Section 3, we formally propose the unified framework \eqref{urk}, and introduce the SBP method along with the sampling rules. In Section 4, convergence rates of SBP methods with non-adaptive sampling rule and adaptive sampling rules are analyzed. At the same time, by tuning the parameters the special cases of the adaptive sampling rules are also researched. In Section 5, we present a couple of examples of the proposed framework and analyze their convergence behaviour. In Section 6, we report some numerical results to compare the performance of different methods. In Section 7, we give a few concluding remarks and research directions for future work.
\begin{figure}
\centering
\includegraphics[width=0.5\textwidth]{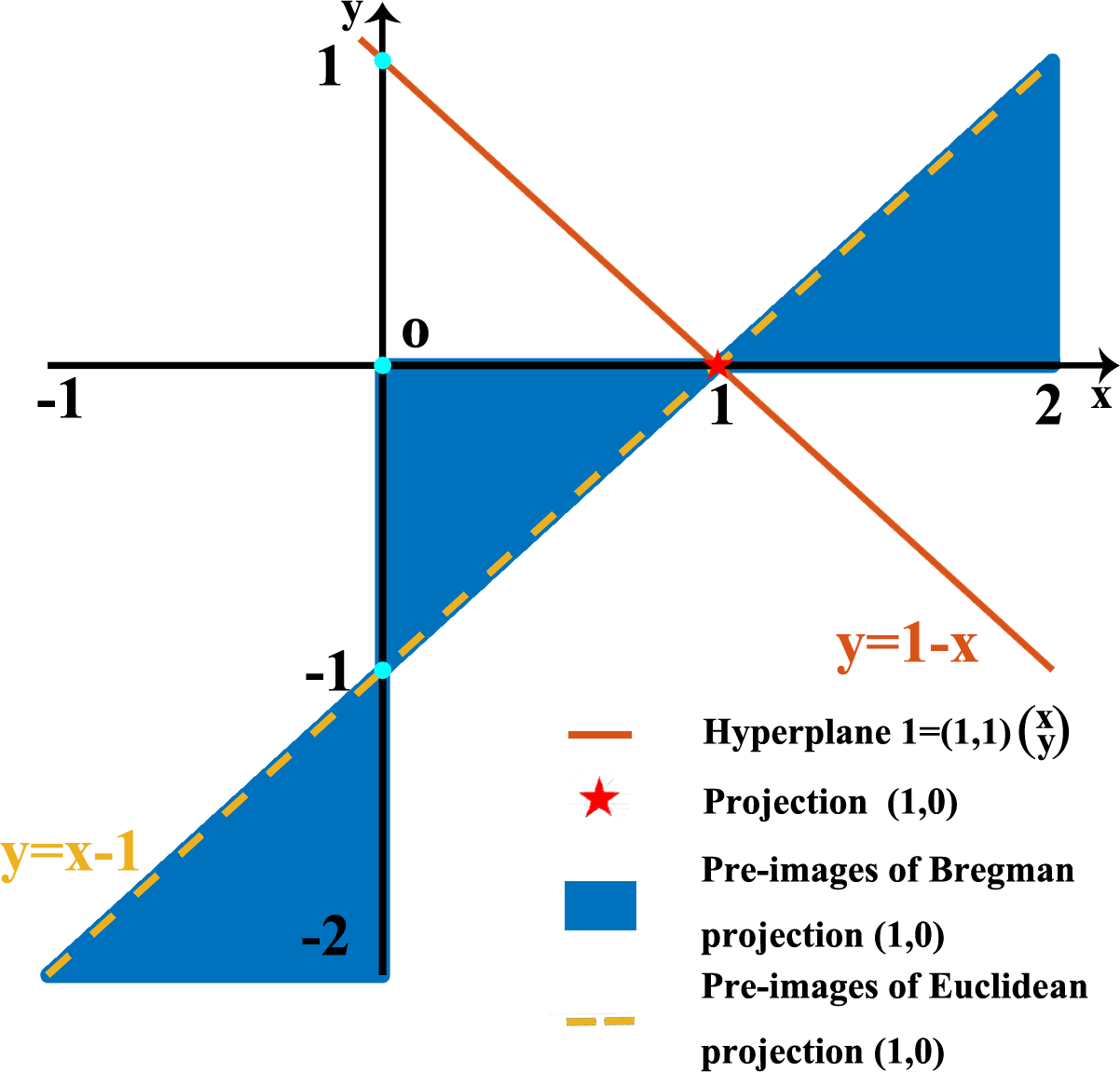}
\caption{Comparison between Bregman projection and Euclidean projection to generate the sparse point. The generating function $f$ of the Bregman projection is $\frac{1}{2}\|x\|^2+\|x\|_1$.}
\label{geometric}
\end{figure}

\section{Preliminaries}
Let $f:\RR^n\rightarrow (-\infty, +\infty]$ be a proper convex function. The effective domain of $f$ is given by $\dom f :=\{x\in \RR^n: f(x)< +\infty\}$.
The Fenchel conjugate of $f$ is defined as
$$f^*(y)=\sup_{x\in \RR^n}\{\langle x,y\rangle -f(x)\}.$$
The subdifferential of $f$  at $x$ is given by
$$\partial f (x):= \{ y \in \RR^n: f(u)\geq f(x)+ \langle y, u-x\rangle,\quad \forall u\in \RR^n \}.$$
We say that $f$ is subdifferentiable at $x\in \RR^n$ if $\partial f(x)\neq \emptyset$. The elements of $\partial f(x)$ are called the subgradients of $f$ at $x$.
We say that $f:\RR^n\rightarrow \RR$ is strongly convex with modulus $\mu>0$ if for any $x, y\in \RR^n$ and $x_*\in \partial f(x)$, we have
\begin{equation}
f(y)\geq f(x)+\langle x_*, y-x\rangle +\frac{\mu}{2}\|y-x\|^2.
\end{equation}

Throughout the paper, $\|\cdot\|$ is default to be the 2-norm, $\|\cdot\|_{\text{F}}$ is the Frobenius norm of a matrix. $(\cdot)^{\dagger}$ is the Moore -penrose pseudo inverse, and $\textbf{supp}(\cdot)$ means the set of the index whose value is not zero. For a real positive number $q$, $[q]:=\{1,2,\cdots,q\}$. $|\cdot|$ is the cardinality of a set.

\subsection{Bregman distance and projection}
We recall the definitions of the Bregman distance and Bregman projection, introduced in \cite{2016Linear}.
\begin{definition}[Bregman distance \cite{2016Linear}] Let $f:\RR^n\rightarrow \RR$ be a strongly convex function. The Bregman distance $D_f^{x_*}(x,y)$ between $x, y\in \RR^n$ with respect to $f$ and a subgradient $x_*\in\partial f(x)$ is defined by
$$D_f^{x_*}(x,y):=f(y)-f(x)-\langle x_*, y-x\rangle.$$
\end{definition}

\begin{definition}[Bregman projection \cite{2016Linear}] Let $f:\RR^n\rightarrow \RR$ be a strongly convex function and $C\subset \RR^n$ be a nonempty closed convex set. The Bregman projection of $x$ onto $C$ with respect to $f$ and $x_*\in\partial f(x)$ is the unique point $\Pi_C^{x_*}(x)\in C$ such that
$$\Pi_C^{x_*}(x)=\arg\min_{y\in C}D_f^{x_*}(x,y).$$
\end{definition}
It is not hard to see that the uniqueness of $\Pi_C^{x_*}(x)$ is implied by the strong convexity of $f$. The characterization below of the Bregman projection will play a vital role for deriving the linear convergence rate later on.

\begin{lemma}[\cite{2016Linear}]\label{lemma2.1}
Let $f:\RR^n\rightarrow \RR$ be a strongly convex function, and $C\subset \RR^n$ be a nonempty closed convex set. Then a point $\hat{x}\in C$ is the Bregman projection of $x$ onto $C$ with respect to $f$ and $x_*\in\partial f(x)$ if and only if there is some $\hat{x}_*\in \partial f(\hat{x})$ such that the following condition is  satisfied
\begin{equation}\label{ineq1}
D_f^{\hat{x}_*}(\hat{x},y)\leq D_f^{x_*}(x,y) - D_f^{x_*}(x,\hat{x}),\quad \forall y\in C.
\end{equation}
We call any such $\hat{x}_*$ an admissible subgradient for $\hat{x}=\Pi_C^{x_*}(x)$.
\end{lemma}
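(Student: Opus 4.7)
The plan is to prove both directions of the equivalence by reducing the four-point inequality (\ref{ineq1}) to the first-order variational inequality that characterizes the Bregman projection, and then exploiting the nonnegativity of the Bregman distance (which follows from the strong convexity of $f$).

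The first step, used in both directions, is a purely algebraic expansion. For any $\hat{x}_{*}\in \partial f(\hat{x})$ and any $y\in C$, I would write out
$$
D_f^{x_{*}}(x,y)-D_f^{x_{*}}(x,\hat{x})-D_f^{\hat{x}_{*}}(\hat{x},y)
\;=\;\langle \hat{x}_{*}-x_{*},\,y-\hat{x}\rangle,
$$
after cancelling the $f(y)$, $f(\hat{x})$, and $\langle x_{*},y-\hat{x}\rangle$ terms. Thus inequality (\ref{ineq1}) is equivalent to
$\langle \hat{x}_{*}-x_{*},\,y-\hat{x}\rangle\ge 0$ for all $y\in C$, which is exactly the variational characterization of a minimizer of the convex function $y\mapsto D_f^{x_{*}}(x,y)=f(y)-f(x)-\langle x_{*},y-x\rangle$ over $C$ via a subgradient $\hat{x}_{*}\in\partial f(\hat{x})$.

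For the ``only if'' direction, assume $\hat{x}=\Pi_C^{x_{*}}(x)$. Since $\hat{x}$ minimizes the convex function $y\mapsto D_f^{x_{*}}(x,y)$ over the nonempty closed convex set $C$, the first-order optimality condition yields the existence of $\hat{x}_{*}\in\partial f(\hat{x})$ with $\hat{x}_{*}-x_{*}\in N_C(\hat{x})$, i.e.\ $\langle \hat{x}_{*}-x_{*},y-\hat{x}\rangle\ge 0$ for all $y\in C$. Plugging this into the identity above delivers (\ref{ineq1}).

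For the ``if'' direction, suppose some admissible $\hat{x}_{*}\in\partial f(\hat{x})$ satisfies (\ref{ineq1}). Because $f$ is strongly convex, the Bregman distance is nonnegative, so $D_f^{\hat{x}_{*}}(\hat{x},y)\ge 0$ for all $y$. Using (\ref{ineq1}) I obtain $D_f^{x_{*}}(x,\hat{x})\le D_f^{x_{*}}(x,y)$ for every $y\in C$, which together with $\hat{x}\in C$ shows that $\hat{x}$ attains the minimum in the definition of $\Pi_C^{x_{*}}(x)$. Uniqueness of the Bregman projection (guaranteed by strong convexity) then gives $\hat{x}=\Pi_C^{x_{*}}(x)$. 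The main subtlety throughout is that $f$ need not be differentiable, so one must work with a selected subgradient $\hat{x}_{*}\in\partial f(\hat{x})$ rather than with $\nabla f(\hat{x})$; this is precisely why the statement requires the existence of \emph{some} admissible subgradient rather than a unique one.
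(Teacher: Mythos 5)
Your proof is correct. The paper itself gives no argument for this lemma (it is imported verbatim from \cite{2016Linear}), and your derivation --- the three-point identity $D_f^{x_*}(x,y)-D_f^{x_*}(x,\hat{x})-D_f^{\hat{x}_*}(\hat{x},y)=\langle \hat{x}_*-x_*,\,y-\hat{x}\rangle$, first-order optimality with the subdifferential sum rule (valid since $f$ is finite on all of $\RR^n$) for the forward direction, and nonnegativity of the Bregman distance plus uniqueness for the converse --- is exactly the standard argument used in the cited source.
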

Notice that when $f=\frac{1}{2}\|x\|^2,$ Lemma \ref{lemma2.1} actually demonstrates the theoretical results in Pythagoras theorem namely for a right-angled triangle the square of the hypotenuse side is equal to the sum of squares of the other two sides.

\subsection{Sketching}
Define $\cD:=\{S_i\in \RR^{m\times \tau}, i=1,2,\cdots q\}$ to be the set of sketching matrices where $\tau\in\NN$ (not fixed) is the sketching size. In general, the size of the sketching matrices $\cD$ is allowed to be infinite, however, we assume that it is a finite set for simplicity of convergence analysis. In each iteration, we will choose a sketching matrix $S_i$ from $\cD$ with positive probability $p_i$. To this end, we let $\Delta_q^{\dag}$ denote the interior of the simplex in $\RR^q$, that is
$$\Delta_q^{\dag}:=\{p\in \RR^q: \sum_{i=1}^q p_i=1, p_i>0\}.$$
Let probabilities $p\in \Delta_q^{\dag}$ and $x$ be a random variable that takes values $x_i\in \{x_1, x_2,\cdots, x_q\}$ with probability $p_i$. Next we denote
$$\EE_p[x]:= \sum_{i=1}^q p_ix_i.$$
For simplicity, we introduce a random index $r$ that takes values $i\in \{1,2,\cdots, q\}$ with probability $p_i$, denoted by $r\sim p$. Then we also denote
$$\EE_{r\sim p}[x_r]=\EE_p[x] = \sum_{i=1}^q p_ix_i.$$
Using this type of language, we could define a random matrix $S$ as a sketch that takes values $S_i$ from $\cD$ with probability $p_i$.

\section{The proposed method}
In this section, we describe the proposed method \eqref{urk} in details. Recall that the sketched Bregman projection method is abbreviated by SBP.
\begin{table}
	\label{a1}
	\begin{tabular}{l}
		\hline
		Algorithm 1 Non-adaptive SBP method\\
		\hline 1: \textbf{Input}: $x^{0}=x^{0}_*=0 \in \mathbb{R}^{n}$, strongly convex function $f$, $A \in \mathbb{R}^{m \times n}, b \in \mathbb{R}^{m}$, $p \in \Delta_{q}^{\dagger}$,\\ $\quad \quad \quad\quad$ and $S=\left[S_{1}, \ldots, S_{q}\right]$, $S_i\in\mathbb{R}^{m\times\tau},i=1,\cdots,q$ \\
		2: for $k=0,1,2, \ldots$ do \\
		3: $\quad i_{k} \sim p$ \\
		4: $\quad y^k\in\arg\min_{y\in\mathbb{R}^{\tau}} f^*(x_*^k-A^{\top}S_{i_k}y)+\langle S_{i_k}^{\top}b, y\rangle$\\
		5: $\quad x^{k+1}_*=x_*^k-A^{\top}S_{i_k}y^k$\\
		6: $\quad  x^{k+1}=\nabla f^*(x_*^{k+1})$\\
		7: \textbf{Output}: last iterate $x^{k+1}$\\
		\hline	
	\end{tabular}
\end{table}
\subsection{The SBP method}
Let $f$ be a given strongly convex function and $r$ be a random mapping from $\NN$ onto $\{1, 2,\cdots q\}$ with its entry $r^k$ obeying probability $p^k\in \Delta^{\dag}_q$, i.e.,  $r^k$ is a random variable taking values from $\{1, 2,\cdots q\}$  with probability $p^k_i$. Given $x^k$ and $x_*^k\in\partial f(x^k)$, the update of $x^k$ in the $(k+1)$th step of SBP method reads as
\begin{equation}\label{urk1}
x^{k+1}=\arg\min_{x\in \RR^n} D_f^{x_*^k}(x^k, x), \quad\st\quad S_{i_k}^{\top}Ax=S_{i_k}^{\top}b.
 \end{equation}
To deduce an admissible subgradient $x_*^{k+1}$ for $x^{k+1}$, we reformulate the optimization problem in \eqref{urk1} as follows
\begin{equation}\label{urk1.1}
\Min_{x\in \RR^n} F(x)+G(M_k x),
 \end{equation}
 where $F(x):=f(x)-\langle x_*^k, x\rangle$, $G(z):=\delta_{\{S_{i_k}^{\top}b\}}(z)$, and $M_k:=S_{i_k}^{\top}A$. Its Fenchel-Rockafellar dual problem \cite{Bauschke2017} is
 \begin{equation}\label{urk1.2}
\Min_{y\in\mathbb{R}^{\tau}} D(y):= F^*(-M_k^{\top} y)+G^*(y)=f^*(x_*^k-A^{\top}S_{i_k}y)+\langle S_{i_k}^{\top}b, y\rangle.
 \end{equation}
Let $y^k$ be a solution to \eqref{urk1.2}, which must exist but not necessarily be unique. Then, following from the KKT  \cite{Bauschke2017} condition of \eqref{urk1.1}-\eqref{urk1.2}, that is
 \begin{eqnarray}\label{app3.2}
\left\{\begin{array}{lll}
- A^{\top}S_{i_k}y \in   \partial f(x)-x_*^k, \\[5pt]
S_{i_k}^{\top}Ax = S_{i_k}^{\top}b,
\end{array} \right.
\end{eqnarray}
we have that
\begin{equation}\label{xupdate}
 x^{k+1}=\nabla f^*(x_*^k-A^{\top}S_{i_k}y^k).
\end{equation}
Now, denote
$$x_*^{k+1}:=x_*^k-A^{\top}S_{i_k}y^k.$$
From \eqref{xupdate} we immediately have that $x_*^{k+1}\in \partial f(x^{k+1})$. In other words, such $x_*^{k+1}$ is an admissible subgradient for $x^{k+1}$.
Let $\overline{x}$ be the ground truth. Next, we will give an error bound of the iterates, which helps us bound $D_f^{x_*}(x,\overline{x})$ by $\|Ax-b\|$.
\begin{lemma}[Theorem 4.12 in \cite{2016Linear}]\label{2016linear}
Consider the linearly constrained optimization problem \eqref{urk1} with $A \in$ $\mathbb{R}^{m \times n}, b \in \mathcal{R}(A),$ and strongly convex $f: \mathbb{R}^{n} \rightarrow \mathbb{R} .$ Let $x^{0} \in \mathbb{R}^{n}$ and $x^{0}_{*} \in \partial f\left(x^{0}\right) \cap$
	$\mathcal{R}\left(A^{\top}\right)$ be given.  If the subdifferential mapping of $f$ is calm at the unique solution $\overline{x}$ of (15) and the collection $\left\{\partial f(\overline{x}), \mathcal{R}\left(A^{\top}\right)\right\}$ is linearly regular, then there exists $\gamma>0$ such that for all $x \in \mathbb{R}^{n}$ and $x_{*} \in \partial f(x) \cap \mathcal{R}\left(A^{\top}\right)$ with $D_{f}^{x_{*}}(x, \overline{x}) \leq D_{f}^{x^{0}_{*}}\left(x^{0}, \overline{x}\right)$, we have
\begin{eqnarray}\label{EB1}
	\gamma \cdot D_{f}^{x_{*}}(x, \overline{x}) \leq \|A x-b\|^{2}.
	\end{eqnarray}
\end{lemma}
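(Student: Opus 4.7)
The plan is to combine three ingredients—strong convexity of $f$, calmness of $\partial f$ at $\overline{x}$, and linear regularity of $\{\partial f(\overline{x}), \mathcal{R}(A^{\top})\}$—to derive an error bound relating $D_f^{x_*}(x,\overline{x})$ to $\|Ax-b\|^{2}$. The cornerstone is the symmetric three-point identity
\[
D_f^{x_*}(x,\overline{x}) + D_f^{\overline{x}_*}(\overline{x},x) = \langle \overline{x}_* - x_*,\; \overline{x} - x \rangle,
\]
valid for any $\overline{x}_* \in \partial f(\overline{x})$. Since Bregman distances are nonnegative, this yields $D_f^{x_*}(x,\overline{x}) \leq \langle \overline{x}_* - x_*,\; \overline{x} - x\rangle$, and my strategy is to choose $\overline{x}_*$ so that $x_* - \overline{x}_*$ lies in $\mathcal{R}(A^{\top})$, converting the right-hand side into an expression involving $A(x-\overline{x}) = Ax - b$.

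To produce such an $\overline{x}_*$, I would first invoke the linear regularity hypothesis at the point $x_* \in \mathcal{R}(A^{\top})$ to obtain some $\overline{x}_* \in \partial f(\overline{x}) \cap \mathcal{R}(A^{\top})$ satisfying
\[
\|x_* - \overline{x}_*\| \;\leq\; \kappa \cdot \dist(x_*, \partial f(\overline{x}))
\]
for a regularity modulus $\kappa$. Then the calmness of $\partial f$ at $\overline{x}$ with modulus $L$ gives $\dist(x_*, \partial f(\overline{x})) \leq L\|x - \overline{x}\|$, so $\|x_* - \overline{x}_*\| \leq \kappa L \|x-\overline{x}\|$. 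Writing $x_* - \overline{x}_* = A^{\top} z$ with the minimum-norm choice $z = (A^{\top})^{\dagger}(x_* - \overline{x}_*)$, the bound $\|z\| \leq \sigma_{\min}^{+}(A)^{-1}\|x_* - \overline{x}_*\|$ (with $\sigma_{\min}^{+}(A)$ the smallest nonzero singular value of $A$) combined with Cauchy–Schwarz produces
\[
D_f^{x_*}(x,\overline{x}) \;\leq\; \langle z, Ax - b\rangle \;\leq\; \frac{\kappa L}{\sigma_{\min}^{+}(A)}\,\|x - \overline{x}\|\,\|Ax-b\|.
\]

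To close the loop, strong convexity supplies $\|x-\overline{x}\|^{2} \leq \tfrac{2}{\mu} D_f^{x_*}(x,\overline{x})$; substituting this into the previous display and cancelling a factor of $\sqrt{D_f^{x_*}(x,\overline{x})}$ (the case when it is zero being trivial) yields the desired inequality with an explicit constant $\gamma = \mu\,(\sigma_{\min}^{+}(A))^{2}/(2\kappa^{2}L^{2})$.

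The main obstacle is a uniformity issue: calmness of $\partial f$ at $\overline{x}$ is by definition only a local property, yet the statement requires the bound to hold for every $x$ in the sublevel set $\{x : D_f^{x_*}(x,\overline{x}) \leq D_f^{x^0_*}(x^0,\overline{x})\}$, which need not be contained in any small neighborhood of $\overline{x}$. This is precisely why the sublevel-set hypothesis is imposed: it confines the relevant $x$ to a bounded region on which a single calmness constant can be extracted, exploiting in typical cases that for generating functions such as $f(x) = \tfrac{1}{2}\|x\|^{2} + \lambda\|x\|_{1}$ the subdifferential mapping is piecewise polyhedral and thus globally calm with a uniform modulus. Carefully justifying this passage from a pointwise modulus at $\overline{x}$ to a uniform constant valid on the whole sublevel set—rather than the algebraic manipulation above—is the delicate technical step.
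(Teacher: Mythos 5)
The paper does not actually prove this statement: it is imported verbatim as Theorem~4.12 of the cited reference \cite{2016Linear}, so there is no in-paper proof to compare against. Your reconstruction is nevertheless essentially the argument of that source and is sound in its skeleton: the three-point identity, the selection of $\overline{x}_*\in\partial f(\overline{x})\cap\mathcal{R}(A^{\top})$ via linear regularity applied at $x_*\in\mathcal{R}(A^{\top})$ (where the $\dist(x_*,\mathcal{R}(A^{\top}))$ term vanishes), the factorization $x_*-\overline{x}_*=A^{\top}z$ with $\|z\|\leq\|x_*-\overline{x}_*\|/\sigma_{\min}^{+}(A)$, and the strong-convexity bound $\|x-\overline{x}\|^{2}\leq\tfrac{2}{\mu}D_f^{x_*}(x,\overline{x})$ all check out, and your constant $\gamma=\mu(\sigma_{\min}^{+}(A))^{2}/(2\kappa^{2}L^{2})$ is consistent with the chain of inequalities.

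The one step you flag but do not close --- passing from calmness of $\partial f$ at $\overline{x}$, a purely local property, to a single constant $L$ valid on the whole sublevel set --- is a genuine gap as written, but it is closable by an argument you should make explicit rather than gesture at polyhedrality. Since $x_*\in\partial f(x)$, strong convexity gives $D_f^{x_*}(x,\overline{x})\geq\tfrac{\mu}{2}\|x-\overline{x}\|^{2}$, so the hypothesis $D_f^{x_*}(x,\overline{x})\leq D_f^{x^{0}_*}(x^{0},\overline{x})=:D_0$ confines $x$ to the ball $\|x-\overline{x}\|\leq\sqrt{2D_0/\mu}$. On the portion of that ball with $\|x-\overline{x}\|<\varepsilon$ you use the calmness modulus directly; on the annulus $\varepsilon\leq\|x-\overline{x}\|\leq\sqrt{2D_0/\mu}$ you instead use that $\partial f$ is locally bounded on $\RR^{n}$ (as $f$ is finite-valued convex), so $\dist(x_*,\partial f(\overline{x}))\leq M$ for some $M$, whence $\dist(x_*,\partial f(\overline{x}))\leq (M/\varepsilon)\|x-\overline{x}\|$. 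Taking $L=\max\{L_{\mathrm{calm}},M/\varepsilon\}$ gives the uniform constant. With that paragraph added, your proof is complete; note also that the resulting $\gamma$ depends on $D_0$ and hence on the initialization, which is consistent with how the lemma is stated and used in the paper.
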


We here briefly recall the definitions of the calmness and linear regularity; for more details please refer to Section 4 in \cite{2016Linear}. A set-valued mapping $S: \mathbb{R}^{n} \rightarrow \mathbb{R}^{m}$ is said to be calm at $\hat{x} \in \mathbb{R}^{n}$ if $S(\hat{x}) \neq \emptyset$ and for any $x\in B_{\varepsilon}(\hat{x})$, there are some constants $\epsilon$ and $L>0$ such that
$$ S(x) \subset S(\hat{x})+L \cdot\|x-\hat{x}\| \cdot B_{1}(0),$$
where $B_\varepsilon(\hat{x})=\{x|\|x-\hat{x}\|\leq\varepsilon\}.$  Two closed convex sets $C_1, C_2$ with nonempty intersection $C=C_1\bigcap C_2$ are said to be linearly regular if there exists $\beta>0$ such that for all $x\in \mathbb{R}^n$ we have
$$\dist(x,C)\leq \beta (\dist(x,C_1)+\dist(x,C_2)).$$

\noindent It has been well verified in \cite{2016Linear} that the differential mapping or sub-differential mapping of $f(x)=\frac{1}{2}\|\mathbf{x}\|^2$ and $f(x)=\frac{1}{2}\|\mathbf{x}\|^2+\lambda\|x\|_1$ are calm at each $x\in\mathbb{R}^n.$
Besides, from Lemma \ref{2016linear}, we can conclude that for any $k$ except $x^k=\overline{x}$, $(x^k-\overline{x})\notin\mathrm{Null}(A)$, if $D_{f}^{x^k_{*}}(x^k, \overline{x}) \leq D_{f}^{x^{0}_{*}}\left(x^{0}, \overline{x}\right)$ .

\subsection{The sampling rule}
The sampling rule of $i_k$ in the SBP method can be divided into non-adaptive and adaptive types. Non-adaptive sampling rules use some fixed probabilities $p\in\Delta_{q}$. Well-known examples include the uniform sampling rule and the rule in \cite{2009AVershyin} which selects the rows with the probability proportional to $\|A_{i_k:}\|^2$. The details of the SBP method with non-adaptive sampling rules are provided in Algorithm 1.

Although the non-adaptive SBP method can be implemented with low computational cost and can be analyzed in a relatively simple way, one of its remarkable drawbacks is that we may sample the same index consecutively; say for example $i_k=i_{k-1}$. Then,
\begin{eqnarray}
\begin{split}
&x^{k+1}=x^{k} \in\arg\min D_f^{x^k_*}(x^k,x)\\
& \st~S_{i_{k-1}}^{\top}Ax =S_{i_{k-1}}^{\top}b
\end{split}.
\end{eqnarray}
As a result, no progress will be made in the updated step. This motivates us to design adaptive sampling rules to speed up convergence. Intuitively, we should assign non-zero probabilities to the indices that have non-zero sketched loss values, defined by
\begin{eqnarray}
 g_{i_k}\left(x^{k}\right)=\left\|A x^{k}-b \right\|_{S_{i_k}(S_{i_k}^{\top}AA^{\top}S_{i_k})^{\dagger}S_{i_k}^{\top}}^2. \label{exactloss}
  \end{eqnarray}
 Consequently, the same index will never be consecutively chosen because the sketched loss value will disappear, as showed in the lemma below.

\begin{lemma}\label{S0}
 	Denote $H_{i_k}:=S_{i_k}(S_{i_k}^{\top}AA^{\top}S_{i_k})^{\dagger}S_{i_k}^{\top}$ and consider the sketched loss $g_{i_k}\left(x^{k}\right)=\left\|A x^{k}-b \right\|_{H_{i_k}}^{2}$ with $x^k$ generated by the SBP \eqref{urk}. We have that
$$
g_{i_{k}}\left(x^{k+1}\right)=0, \quad \forall k \geq 0.
$$
\end{lemma}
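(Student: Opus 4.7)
The plan is to observe that the lemma follows immediately from the feasibility constraint that is built directly into the SBP update. First, I would record that by the definition of $x^{k+1}$ in \eqref{urk1}, the new iterate must satisfy the sketched equation
$$S_{i_k}^\top A x^{k+1} = S_{i_k}^\top b, \qquad \text{equivalently} \qquad S_{i_k}^\top (A x^{k+1} - b) = 0.$$
This is not a consequence of any dual analysis; it is the explicit constraint of the minimization problem defining $x^{k+1}$, and it holds provided the constrained problem is feasible (which it is, since $b \in \mathcal{R}(A)$ ensures that $\overline{x}$ itself satisfies the constraint).

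Next, I would substitute this identity into the quadratic form defining the sketched loss. Writing
$$g_{i_k}(x^{k+1}) \;=\; (A x^{k+1} - b)^\top S_{i_k}\,\bigl(S_{i_k}^\top A A^\top S_{i_k}\bigr)^{\dagger}\, S_{i_k}^\top (A x^{k+1} - b),$$
the outer vector factors $S_{i_k}^\top (A x^{k+1} - b)$ and its transpose are both zero by the previous step, so the whole triple product collapses to $0$ regardless of the central pseudo-inverse factor. This yields $g_{i_k}(x^{k+1}) = 0$, as claimed.

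I do not expect a genuine technical obstacle: the proof is essentially a one-line consequence of the exact projection step. The only mild checks are that $(S_{i_k}^\top A A^\top S_{i_k})^{\dagger}$ is well-defined (which holds because the Moore--Penrose pseudo-inverse is defined for any matrix, possibly singular) and that the constraint set is non-empty (guaranteed by $b \in \mathcal{R}(A)$), both of which are already assumed in the setting of Algorithm~1 and Lemma~\ref{2016linear}. Hence the lemma admits a clean, self-contained proof that simply unpacks the definitions.
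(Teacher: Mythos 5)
Your proposal is correct and takes essentially the same route as the paper: both arguments reduce the claim to the feasibility identity $S_{i_k}^{\top}Ax^{k+1}=S_{i_k}^{\top}b$ enforced by the constraint in the projection step \eqref{urk1}, which makes the quadratic form vanish. The paper merely phrases the same computation via $Z_{i_k}(x^{k+1}-\overline{x})=0$ with $b=A\overline{x}$, whereas you work directly with the residual $Ax^{k+1}-b$; the content is identical.
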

\begin{proof}
Let $Z_{i_k}:=A^{\top}H_{i_k}A$. By the definition of $g_{i_k}(x^k)$, we deduce
$$
g_{i_{k}}\left(x^{k+1}\right)=\left\|x^{k+1}-\overline{x}\right\|_{Z_{i_{k}}}^{2}=\left\langle \left(x^{k+1}-\overline{x}\right),Z_{i_{k}} \left(x^{k+1}-\overline{x}\right)\right\rangle.
$$

Therefore, the conclusion follows by observing that
\begin{eqnarray*}
Z_{i_{k}}\left(x^{k+1}-\overline{x}\right) &=& A^{\top}S_{i_k}(S^{\top}_{i_k}AA^{\top}S_{i_k})^{\dag}S_{i_k}^{\top}A\left(x^{k+1}-\overline{x}\right) \\
&=&A^{\top}S_{i_k}(S_{i_k}^{\top}AA^{\top}S_{i_k})^{\dag}(S_{i_k}^{\top}Ax^{k+1}-S_{i_k}^{\top}b)=0.
\end{eqnarray*}
\end{proof}

\noindent With this key result at hand, we are able to design adaptive sampling rules.
\begin{table}
	\label{a2}
	\begin{tabular}{l}
		\hline
		Algorithm 2 Sampling adaptive SBP method\\
		\hline ~1: \textbf{Input}: $x^{0}=x^{0}_*=0 \in \mathbb{R}^{n}$, $\theta \in [0,1]$, strongly convex function $f$, $A \in \mathbb{R}^{m \times n}, b \in \mathbb{R}^{m},$ \\
		$\quad \quad \quad\quad$ $S=\left[S_{1}, \ldots, S_{q}\right],$ $S_i\in\mathbb{R}^{m\times\tau},i=1,\cdots,q$ \\
		~2: for $k=0,1,2, \ldots$ do \\
		~3: $\quad\tau_k\in\binom{[q]}{\beta_k}\sim p_1^k$ $,p_1^k \in \Delta_{|\binom{[q]}{\beta_k}|}^{\dagger}$, and $\beta_k\leq q$ \\
		~4: $\quad g_{i}\left(x^{k}\right)=\left\|A x^{k}-b\right\|_{H_{i}}$ for $i\in\tau_k$ where $H_{i}:=S_i(S_{i}^{\top}AA^{\top}S_{i})^{\dagger}S_{i}^{\top}$ \\
		~5: $\quad\mathcal{W}_{k}=\left\{i \mid g_{i}\left(x^{k}\right) \geq \theta \max _{j\in\tau_k} g_{j}\left(x^{k}\right)+(1-\theta) \mathbb{E}_{j \sim p_2^k}\left[g_{j}\left(x^{k}\right)\right]\right\}$,\\
		~~$\quad$ where $p^{k}_2 \in \Delta_{q}^{\dagger}$ such that $\textbf{supp}\left(p^{k}_2\right) \subset \tau_{k}$\\
		~6: $\quad i_k\sim p^k_3$, where $p^{k}_3 \in  \Delta_{q}^{\dagger}$ so that $\textbf{supp}\left(p^{k}_3\right) \subset \mathcal{W}_{k}$\\
		~7: $\quad y^k\in\arg\min_{y\in\mathbb{R}^{\tau}} f^*(x_*^k-A^{\top}S_{i_k}y)+\langle S_{i_k}^{\top}b, y\rangle$\\
		~8: $\quad x^{k+1}_*=x_*^k-A^{\top}S_{i_k}y^k$\\
		~9: $\quad  x^{k+1}=\nabla f^*(x_*^{k+1})$\\
		10: \textbf{Output}: last iterate $x^{k+1}$\\
		\hline	
	\end{tabular}
\end{table}
At each iteration $k$, we consider the class of subsets $\binom{[q]}{\beta_k}$, each of which is a subset of $[q]$ containing $\beta_k$ number chosen from the set $[q]$. Assume that each subset will be sampled with probabilities $p_1^k\in\Delta_{|\binom{[q]}{\beta_k}|}^{\dagger}$. For example, one may consider the following weighted probabilities or uniform probability. $$p_1^k(\tau_k)=\frac{\sum_{i\in\tau_k}\| S_i^{\top}A\|_{\textrm{F}}^2}{\sum_{\tau_k\in\binom{[q]}{\beta_k}}\sum_{i\in\tau_k}\| S_i^{\top}A\|_{\textrm{F}}^2}.$$ The reason to sample the index from one of the $[\binom{[q]}{\beta_k}]$ blocks is to add randomness to alleviate the algorithm from early stagnation.

Next, we construct an index set $\mathcal{W}_{k}$, which contains indices whose sketched losses are large than the weighted sum of the maximal sketched loss and the expectation $\mathbb{E}_{i \sim p_2^k}\left[g_{i}\left(x^{k}\right)\right]$. More concretely, the index set can be defined by
$$\mathcal{W}_{k}:=\left\{i\in\tau_k\mid g_{i}\left(x^{k}\right) \geq \theta \max _{j\in\tau_k} g_{j}\left(x^{k}\right)+(1-\theta) \mathbb{E}_{j \sim p_2^k}\left[g_{j}\left(x^{k}\right)\right]\right\},$$
where $p^{k}_2 \in \Delta_{q}^{\dagger}$ such that $\textbf{supp}\left(p^{k}_2\right) \subset\tau_{k}$ and the input parameter $\theta \in[0,1]$ controls how aggressive the sampling method is.
Finally, the probability $p_{3}^{k}$ only considers those indices that are contained in the set $\mathcal{W}_{k}$. The details can be found in Algorithm 2.

In the remaining,  from both theoretical and numerical aspects, we will demonstrate that the sampling adaptive SBP method with finely tuning parameters will converge faster.

%\subsection{The SBP2 method}
%Let $f$ be a given Legendre-type function. Using the Bregman distance of type-II as a replacement in SBP1 and letting $x^0$ be contained in $\rint(\dom f)$, we obtain the SBP2 method
%\begin{equation}\label{urk2}
%x^{k+1}=\arg\min_{x\in \RR^n} D_f(x^k, x), \quad\st\quad S_{r^k}^{\top}Ax=S_{r^k}^{\top}b,
% \end{equation}
%Repeating the dual argument for SBP1 and simply using $\nabla f(x^k)$ to replace $x_*^k$, we derive the following dual optimization problem
% \begin{equation}\label{urk2.2}
%\Min  f^*(\nabla f(x^k)-A^{\top}S_{r^k}y)+\langle S_{r^k}^{\top}b, y\rangle.
% \end{equation}
% Let $y^k$ be a solution to \eqref{urk2.2}; then we have
% \begin{equation}\label{xupdate2}
% x^{k+1}=\nabla f^*(\nabla f(x^k)-A^{\top}S_{r^k}y^k).
% \end{equation}

\section{Convergence analysis}
In this section, we deduce convergence results for the SBP method with non-adaptive and adaptive sampling rule, respectively. To this end, we first introduce some important spectral constants for formulating the convergence rates of the SBP method.
\subsection{ Important spectral constants}
First, we require the exactness assumption, introduced in \cite{richtarik2020stochastic}. Recall that $Z_{i_k}:=A^{\top}H_{i_k}A$. The lemma below shows that $Z_{i}$ is actually an orthogonal projection matrix.
\begin{lemma}\label{orth}
	Let $Z_{i_k}:=A^{\top}H_{i_k}A$. Then we have
	\begin{eqnarray}\label{pequal}
		Z_iZ_i=Z_i~\text{and}~(I-Z_i)Z_i=0.
	\end{eqnarray}
\end{lemma}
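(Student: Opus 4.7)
The plan is to reduce both identities to the single statement that $Z_i$ is idempotent, since $(I-Z_i)Z_i = Z_i - Z_i^2$ follows immediately from $Z_i^2 = Z_i$. So the entire content of the lemma is the first equality.

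To prove $Z_i^2 = Z_i$, I would abbreviate $M := S_i^\top A$, so that $M^\top = A^\top S_i$ and $MM^\top = S_i^\top A A^\top S_i$. In this notation,
\begin{equation*}
Z_i = M^\top (MM^\top)^\dagger M.
\end{equation*}
This is exactly the standard closed-form expression for the orthogonal projector onto $\mathcal{R}(M^\top)$, so idempotency is expected on geometric grounds; the task is just to verify it algebraically.

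For the verification, I would expand
\begin{equation*}
Z_i^2 = M^\top (MM^\top)^\dagger M M^\top (MM^\top)^\dagger M
\end{equation*}
and collapse the middle factor using a Moore--Penrose identity. Since $MM^\top$ is symmetric, its pseudoinverse satisfies $(MM^\top)^\dagger (MM^\top)(MM^\top)^\dagger = (MM^\top)^\dagger$ (this is one of the defining Penrose equations, $P^\dagger P P^\dagger = P^\dagger$). Substituting this back gives $Z_i^2 = M^\top (MM^\top)^\dagger M = Z_i$, establishing the first claim. The second identity then reads $(I-Z_i)Z_i = Z_i - Z_i^2 = 0$.

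I do not expect any real obstacle here; the only point worth being careful about is that the pseudoinverse is applied to $MM^\top$ rather than to $M$ itself, so one must invoke the Penrose identity $P^\dagger P P^\dagger = P^\dagger$ for the symmetric matrix $P = MM^\top$ rather than trying to use identities like $M^\dagger M M^\dagger = M^\dagger$ directly. Everything else is a one-line simplification.
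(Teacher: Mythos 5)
Your proposal is correct and follows essentially the same route as the paper: the paper likewise expands $Z_iZ_i$ and collapses the middle factor via the Penrose identity $P^{\dagger}PP^{\dagger}=P^{\dagger}$ applied to $P=S_i^{\top}AA^{\top}S_i$, then notes the second identity is an immediate consequence. (Your remark about symmetry is harmless but unnecessary, since that Penrose identity holds for any matrix.)
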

\begin{proof} It follows from that
	\begin{eqnarray*}
		Z_iZ_i&=&A^{\top}S_i(S_i^{\top}AA^{\top}S_i)S_i^{\top}AA^{\top}S_i(S_i^{\top}AA^{\top}S_i)^{\dag}S_i^{\top}A\\
		&=&A^{\top}S_i(S_i^{\top}AA^{\top}S_i)^{\dag}S_i^{\top}A=Z_i.
	\end{eqnarray*}
\end{proof}
\noindent We say that the \textsl{exactness assumption} \cite{richtarik2020stochastic} holds for $(p,\cD)$ if
\begin{equation}\label{EA}
\textrm{Null}(A)=\textrm{Null} (\EE_{i\sim p}[Z_i]).
\end{equation}
It has been verified that the exactness assumption holds trivially for most sketching techniques \cite{richtarik2020stochastic,Gower2019}. Based on the exactness assumption, one can conclude that the expected sketched loss $\mathbb{E}_{i\sim p}\left[g_i(x)\right]=\mathbb{E}_{i\sim p}\left[\|Ax-b\|_{H_{i}}^2\right]=0$ if and only if $Ax=b$; the argument can be found for example from \cite{Gower2019}. Next, we introduce some important spectral constants.

%Thus, if $\mathbb{E}_{i\sim p}\left[g_i(x)\right]=0$, then $x$ is the minimum of $\mathbb{E}_{i\sim p}\left[g_i(x)\right]$, and we can say $x$ is a solution of the linear systems. Specifically by the first order optimality condition and the fact that $H_i$ is a symmetric matrix, we have
%$$
%A^{\top} \mathbb{E}_{i \sim p}\left[H_{i}\right](A x-b)=A^{\top} \mathbb{E}_{i \sim p}\left[H_{i}\right] A\left(x-\overline{x}\right)=0
%$$
%As a result, by the exactness assumption we can have
%$$
%x-\overline{x} \in \operatorname{Null}\left(A^{\top} \mathbb{E}_{i \sim p}\left[H_{i}\right]A\right)= \operatorname{Null}(A)
%$$
%Then $A\left(x-\overline{x}\right)=Ax-b=0 .$

\begin{definition}
	Let $p \in\Delta_q^{\dagger}$. Denote
	\begin{eqnarray}\label{spec1}
	\sigma_{p}^{2}(S) := \min _{v \notin \mathrm{Null(A)}} \frac{\left\|v\right\|_{\mathbb{E}_{i \sim p}\left[Z_{i}\right]}^{2}}{\|v\|^{2}},
	\end{eqnarray}
	and
	\begin{eqnarray}\label{spec2}
	\sigma_{\infty}^{2}(S) := \min _{v \notin \mathrm{Null(A)}} \max _{i=1, \ldots, q} \frac{\left\| v\right\|_{Z_{i}}^{2}}{\|v\|^{2}} .
	\end{eqnarray}
	\end{definition}

\begin{lemma}\label{spectrum1}
Assume the conditions in Lemma \ref{2016linear} hold. Let $p \in \Delta_{q}^{\dagger}$ and  the iterates $x^{k}$ be generated by Algorithm 1 satisfying $D_{f}^{x^k_{*}}(x^k, \overline{x}) \leq D_{f}^{x^{0}_{*}}\left(x^{0}, \overline{x}\right)$. Then, we have
\begin{equation}\label{spectrum2}
\begin{aligned}
\max _{i=1, \ldots, q} g_{i}\left(x^{k}\right) & \geq \sigma_{\infty}^{2}(S)\left\|x^{k}-\overline{x}\right\|^{2}, \\[5pt]
\mathbb{E}_{i \sim p}\left[g_{i}\left(x^{k}\right)\right] & \geq \sigma_{p}^{2}(S)\left\|x^{k}-\overline{x}\right\|^{2}.
\end{aligned}
\end{equation}
\end{lemma}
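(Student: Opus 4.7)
The plan is to reduce both inequalities to straightforward applications of the variational definitions of $\sigma_\infty^2(S)$ and $\sigma_p^2(S)$, once we have rewritten the sketched loss $g_i(x^k)$ as a seminorm of the error vector $e^k := x^k - \overline{x}$.

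First I would observe that, since the linear system $Ax = b$ is consistent by assumption (the hypotheses of Lemma \ref{2016linear} require $b \in \mathcal{R}(A)$) and $\overline{x}$ is the solution identified there, we have $A\overline{x} = b$. Using the definition $H_i = S_i(S_i^{\top} A A^{\top} S_i)^{\dagger} S_i^{\top}$ and $Z_i = A^{\top} H_i A$, this yields the identity
\begin{equation*}
g_i(x^k) \;=\; \|A x^k - b\|_{H_i}^2 \;=\; \|A(x^k - \overline{x})\|_{H_i}^2 \;=\; \langle e^k, Z_i e^k\rangle \;=\; \|e^k\|_{Z_i}^2.
\end{equation*}
Taking the maximum over $i$ (resp.\ the expectation over $i \sim p$) gives $\max_i g_i(x^k) = \max_i \|e^k\|_{Z_i}^2$ and $\mathbb{E}_{i \sim p}[g_i(x^k)] = \|e^k\|_{\mathbb{E}_{i \sim p}[Z_i]}^2$.

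Next I would invoke the variational characterizations \eqref{spec1}--\eqref{spec2}. These are Rayleigh-type infima taken over $v \notin \mathrm{Null}(A)$, so to apply them I need the crucial side condition $e^k \notin \mathrm{Null}(A)$ whenever $e^k \neq 0$. This is precisely the consequence of Lemma \ref{2016linear} highlighted right after its statement in the paper: under the hypothesis $D_f^{x^k_*}(x^k, \overline{x}) \leq D_f^{x^0_*}(x^0, \overline{x})$, any iterate distinct from $\overline{x}$ satisfies $e^k \notin \mathrm{Null}(A)$. (Indeed, \eqref{EB1} forces $\|Ax^k - b\|^2 > 0$ whenever $D_f^{x^k_*}(x^k,\overline{x}) > 0$, and the strong convexity of $f$ implies $D_f^{x^k_*}(x^k,\overline{x}) > 0$ when $x^k \neq \overline{x}$.) Plugging $v = e^k$ into the definitions then gives
\begin{equation*}
\max_{i=1,\ldots,q} \|e^k\|_{Z_i}^2 \;\geq\; \sigma_\infty^2(S)\,\|e^k\|^2, \qquad
\|e^k\|_{\mathbb{E}_{i \sim p}[Z_i]}^2 \;\geq\; \sigma_p^2(S)\,\|e^k\|^2,
\end{equation*}
which, combined with the identities above, yields both inequalities in \eqref{spectrum2}. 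The edge case $x^k = \overline{x}$ is trivial since both sides vanish.

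There is essentially no obstacle beyond the two routine moves above: exploiting $b = A\overline{x}$ to rewrite $g_i$, and then citing the previously established fact that the error lives outside $\mathrm{Null}(A)$ so that the Rayleigh-quotient infima apply. The only subtle point worth verifying carefully is that the descent-in-Bregman-distance hypothesis propagates along the whole trajectory of Algorithm 1 (so that Lemma \ref{2016linear} is legitimately applicable at every iterate $x^k$ along with its subgradient $x_*^k$, including the requirement $x_*^k \in \mathcal{R}(A^\top)$ which is immediate from the update $x_*^{k+1} = x_*^k - A^\top S_{i_k} y^k$ together with the initialization $x_*^0 = 0$).
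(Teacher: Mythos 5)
Your proposal is correct and follows essentially the same route as the paper's proof: rewrite $g_i(x^k)=\|x^k-\overline{x}\|_{Z_i}^2$ using $A\overline{x}=b$, note that $x^k-\overline{x}\notin\mathrm{Null}(A)$ (unless $x^k=\overline{x}$) so the Rayleigh-quotient infima in \eqref{spec1}--\eqref{spec2} apply, and bound the resulting quotients by $\sigma_\infty^2(S)$ and $\sigma_p^2(S)$. Your treatment is in fact slightly more explicit than the paper's about the edge case $x^k=\overline{x}$ and about why the error lies outside $\mathrm{Null}(A)$, but the argument is the same.
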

\begin{proof}
For $\max _{i=1, \ldots, q} g_{i}\left(x^{k}\right) $, we deduce
\begin{equation}
\begin{aligned}
\frac{\max _{i=1, \ldots, q} g_{i}\left(x^{k}\right)}{\left\|x^{k}-\overline{x}\right\|^2} {=} \max _{i=1, \ldots, q} \frac{\left\|x^{k}-\overline{x}\right\|_{Z_{i}}^{2}}{\left\|x^{k}-\overline{x}\right\|^{2}}\geq \min _{v \notin \mathrm{Null(A)}} \max _{i=1, \ldots, q} \frac{\left\| v\right\|^2_{Z_{i}}}{\|v\|^{2}} {=} \sigma_{\infty}^{2}(S).
\end{aligned}
\end{equation}
Analogously,
\begin{equation}
\begin{aligned}
\frac{\mathbb{E}_{i \sim p}\left[g_{i}\left(x^{k}\right)\right]}{\left\|x^{k}-\overline{x}\right\|^{2}} = \frac{\mathbb{E}_{i \sim p}\left[\left\|x^{k}-\overline{x}\right\|_{Z_{i}}^{2}\right]}{\left\|x^{k}-\overline{x}\right\|^{2}} \geq \min _{v \notin \mathrm{Null(A)}} \frac{\mathbb{E}_{i \sim p}\left[\left\| v\right\|_{Z_{i}}^{2}\right]}{\|v\|^{2}}= \sigma_{p}^{2}(S) .
\end{aligned}
\end{equation}
\end{proof}
		\begin{definition}
			Let $p_1^k\in\Delta_{|\binom{[q]}{\beta_k}|}^{\dagger}$, $p_2^{k}\in\Delta_{\beta_k}^{\dagger}$, and $\tau_k\in\binom{[q]}{\beta_k}$. Denote
			\begin{eqnarray}\label{spectrum3}
			\sigma_{p_1^k, p^{k}_2}(\beta_k,S) := \min _{v \notin \mathrm{Null(A)}} \frac{\left\|v\right\|_{\mathbb{E}_{\tau_k\sim p_1^k}\left[\mathbb{E}_{i \sim p^{k}_2}\left[Z_i\right]\right]}^{2}}{\|v\|^{2}},
			\end{eqnarray}
			and
			\begin{eqnarray}\label{spectrum4}
			\sigma_{p_1^k,\infty}(\beta_k,S):= \min _{v \notin \mathrm{Null(A)}}\mathbb{E}_{\tau_k\sim p_1^k}\left[ \max _{i\in\tau_k} \frac{\left\| v\right\|_{Z_i}^{2}}{\|v\|^{2}}\right] .
			\end{eqnarray}
		\end{definition}
\noindent
Similar to Lemma \ref{spectrum1}, we can get the results below.
\begin{lemma}\label{spectrum11}
Assume the conditions in Lemma \ref{2016linear} hold. Let $p_1^k\in\Delta_{|\binom{[q]}{\beta_k}|}^{\dagger}$, $p_2^{k}\in\Delta_{\beta_k}^{\dagger}$, $\tau_k\in\binom{[q]}{\beta_k}$ and the iterates $x^{k}$ be generated by Algorithm 2 satisfying $D_{f}^{x^k_{*}}(x^k, \overline{x}) \leq D_{f}^{x^{0}_{*}}\left(x^{0}, \overline{x}\right)$. Then, we have
	\begin{equation}
	\begin{aligned}
\mathbb{E}_{\tau_k\sim p_1^k}\left[	\max _{i\in \tau_k} g_{i}\left(x^{k}\right)\right] & \geq \sigma_{p_1^k,\infty}^{2}(\beta_k, S)\left\|x^{k}-\overline{x}\right\|^{2}, \\[5pt]
	\mathbb{E}_{\tau_k\sim p_1^k}\left[\mathbb{E}_{i \sim p^{k}_2}\left[g_{i}\left(x^{k}\right)\right]\right] & \geq \sigma_{p^k_1, p^{k}_2}^{2}(\beta_k,S)\left\|x^{k}-\overline{x}\right\|^{2}.
	\end{aligned}
	\end{equation}
		\end{lemma}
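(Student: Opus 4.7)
The plan is to mimic the proof of Lemma \ref{spectrum1} and essentially reduce the claim to the respective definitions of the spectral constants $\sigma_{p_1^k,\infty}^{2}(\beta_k,S)$ and $\sigma_{p_1^k, p_2^k}^{2}(\beta_k,S)$, so no new ideas beyond Lemma \ref{spectrum1} should be required.

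First I would rewrite the sketched loss in the quadratic form that already appeared in the proof of Lemma \ref{S0}, namely
\begin{equation*}
g_i(x^k)=\|Ax^k-b\|_{H_i}^2=\|A(x^k-\overline{x})\|_{H_i}^2=\|x^k-\overline{x}\|_{Z_i}^2,
\end{equation*}
using $A\overline{x}=b$ and the definition $Z_i=A^\top H_i A$. Then I would invoke the observation (recorded right after Lemma \ref{2016linear}) that, under $D_f^{x_*^k}(x^k,\overline{x})\le D_f^{x_*^0}(x^0,\overline{x})$, one has $x^k-\overline{x}\notin \mathrm{Null}(A)$ whenever $x^k\neq \overline{x}$; the case $x^k=\overline{x}$ makes both inequalities trivially $0\ge 0$, so I can divide freely by $\|x^k-\overline{x}\|^2$.

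For the first inequality, after dividing, I would pull the deterministic quantity $\|x^k-\overline{x}\|$ inside each realisation of $\tau_k$ and then lower-bound the ratio by its minimum over all admissible vectors:
\begin{equation*}
\frac{\mathbb{E}_{\tau_k\sim p_1^k}\!\left[\max_{i\in\tau_k}g_i(x^k)\right]}{\|x^k-\overline{x}\|^2}
=\mathbb{E}_{\tau_k\sim p_1^k}\!\left[\max_{i\in\tau_k}\frac{\|x^k-\overline{x}\|_{Z_i}^2}{\|x^k-\overline{x}\|^2}\right]
\ge \min_{v\notin\mathrm{Null}(A)}\mathbb{E}_{\tau_k\sim p_1^k}\!\left[\max_{i\in\tau_k}\frac{\|v\|_{Z_i}^2}{\|v\|^2}\right]=\sigma_{p_1^k,\infty}^{2}(\beta_k,S),
\end{equation*}
by definition \eqref{spectrum4}. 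For the second inequality, I would similarly use linearity of expectation to pass the expectation through the quadratic form:
\begin{equation*}
\mathbb{E}_{\tau_k\sim p_1^k}\!\left[\mathbb{E}_{i\sim p_2^k}[g_i(x^k)]\right]=\|x^k-\overline{x}\|^2_{\mathbb{E}_{\tau_k\sim p_1^k}[\mathbb{E}_{i\sim p_2^k}[Z_i]]},
\end{equation*}
divide by $\|x^k-\overline{x}\|^2$, and apply definition \eqref{spectrum3}.

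The only point that requires a little care is that in Algorithm 2 the sub-distribution $p_2^k$ has $\mathrm{supp}(p_2^k)\subset\tau_k$, hence itself depends on the random set $\tau_k$; I would keep this nesting explicit in the tower expectation $\mathbb{E}_{\tau_k\sim p_1^k}[\mathbb{E}_{i\sim p_2^k}[\cdot]]$ and note that linearity still lets me move $\mathbb{E}$ inside the $Z_i$-quadratic form because $x^k-\overline{x}$ is measurable with respect to information available before sampling $\tau_k$. Beyond this bookkeeping I do not foresee any real obstacle, since the statement is the direct analogue of Lemma \ref{spectrum1} with a single outer expectation over $\tau_k$ added.
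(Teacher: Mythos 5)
Your proposal is correct and follows exactly the route the paper intends: the paper omits the proof of this lemma, stating only that it is ``similar to Lemma \ref{spectrum1},'' and your argument is precisely that adaptation --- rewriting $g_i(x^k)=\|x^k-\overline{x}\|_{Z_i}^2$, dividing by $\|x^k-\overline{x}\|^2$ (justified by the post-Lemma \ref{2016linear} observation that $x^k-\overline{x}\notin\mathrm{Null}(A)$), and bounding below by the minimum over $v\notin\mathrm{Null}(A)$ to recover definitions \eqref{spectrum3} and \eqref{spectrum4}. Your explicit handling of the trivial case $x^k=\overline{x}$ and of the dependence of $p_2^k$ on $\tau_k$ is careful bookkeeping that the paper leaves implicit.
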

\noindent Next, we show that the four spectral constants are always less than one. Moreover, if the exactness assumption and the assumptions in Lemma \ref{2016linear} hold, then the spectral constants are strictly greater than zero.
\begin{lemma}\label{relationshipspectral}
Assume the conditions in Lemma \ref{2016linear} hold, and let $p \in \Delta_{q}^{\dagger}$,  $p_1^k\in\Delta_{|\binom{[q]}{\beta_k}|}^{\dagger}$, $p^{k}_{2}\in\Delta_{\beta_k}^{\dagger}$, and the set of sketching matrices $\left\{S_{1}, \ldots, S_{q}\right\}$ be such that exactness assumption holds. Then, we have
	\begin{itemize}
		\item[1.] 	$
		0<\sigma_{p}^{2}(S)\leq  \sigma_{\infty}^{2}(S) \leq 1
		.$
		\item[2.]	$
	 \sigma_{p_1^k,\infty}^2(\beta_k,S)\leq \sigma_{\infty}^{2}(S)
		.$
				\item [3.]
		$
		0<\sigma_{p_1^k,p^{k}_2}^2(\beta_k,S) \leq \sigma_{p_1^k,\infty}^2(\beta_k,S) \leq 1
		.$
	\end{itemize}
	\end{lemma}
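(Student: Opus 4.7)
The plan rests on three elementary observations that together dispatch all six inequalities. First, by Lemma \ref{orth} each $Z_i$ is an orthogonal projection, hence $0 \leq v^\top Z_i v \leq \|v\|^2$ for every $v$. Second, for any nonnegative random variable $\mathbb{E}[X] \leq \max X$. Third, the exactness assumption forces $\bigcap_{i=1}^{q} \mathrm{Null}(Z_i) = \mathrm{Null}(A)$: since each $Z_i$ is PSD, $\mathrm{Null}(\mathbb{E}_{i\sim p}[Z_i]) = \bigcap_{i}\mathrm{Null}(Z_i)$ whenever $p \in \Delta_q^\dagger$, and the left-hand side equals $\mathrm{Null}(A)$ by exactness.

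For item 1, $\sigma_\infty^2(S) \leq 1$ is immediate from the first observation, and $\sigma_p^2(S) \leq \sigma_\infty^2(S)$ follows by applying the second observation pointwise in $v$ (so $\mathbb{E}_{i\sim p}[\|v\|_{Z_i}^2] \leq \max_i \|v\|_{Z_i}^2$) and then minimizing. For the strict positivity $\sigma_p^2(S)>0$, I will invoke exactness: the matrix $M := \mathbb{E}_{i\sim p}[Z_i]$ is PSD with $\mathrm{Null}(M)=\mathrm{Null}(A)$, so $\mathrm{Range}(M)=\mathrm{Null}(A)^\perp$. Decomposing any admissible $v$ along $\mathrm{Range}(A^\top) \oplus \mathrm{Null}(A)$ and noting that $M$ annihilates the second component while it contributes nothing to the numerator, the minimum in \eqref{spec1} is attained on $\mathrm{Range}(A^\top)$ (the subspace in which the algorithm's error lives by the remark following Lemma \ref{2016linear}), yielding the lower bound $\lambda_{\min}^+(M) > 0$.

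Items 2 and 3 follow by the same two-step procedure. For item 2, $\tau_k \subset [q]$ immediately gives $\max_{i \in \tau_k}\|v\|^2_{Z_i} \leq \max_{i \in [q]}\|v\|^2_{Z_i}$; taking the outer expectation over $\tau_k \sim p_1^k$ and then minimizing over $v$ produces $\sigma_{p_1^k,\infty}^2(\beta_k,S) \leq \sigma_\infty^2(S)$. For item 3, the chain $\sigma_{p_1^k,p_2^k}^2 \leq \sigma_{p_1^k,\infty}^2 \leq 1$ comes from applying the second observation to the inner $p_2^k$-expectation (using $\mathrm{supp}(p_2^k) \subset \tau_k$) and then the first observation to the resulting projection quadratic form. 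Strict positivity of $\sigma_{p_1^k,p_2^k}^2$ mirrors item 1: every $i \in [q]$ belongs to some $\tau_k \in \binom{[q]}{\beta_k}$ that $p_1^k$ samples with positive probability, and $p_2^k$ assigns positive mass to every $i$ inside that $\tau_k$, so $\mathbb{E}_{\tau_k}[\mathbb{E}_{i\sim p_2^k}[Z_i]]$ is a strictly positive combination of the $Z_i$'s; by the third observation it shares its null space with $A$, and the subspace/eigenvalue argument from item 1 applies verbatim.

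The main obstacle is the positivity half of items 1 and 3, where one must handle the subtlety that $v \notin \mathrm{Null}(A)$ alone does not yield a uniform positive lower bound on the Rayleigh-type quotient; the quotient can be made arbitrarily small by inflating the $\mathrm{Null}(A)$-component of $v$. Resolving this requires either interpreting the minimum as effectively over $\mathrm{Range}(A^\top)\setminus\{0\}$ (which is legitimate here because the error $x^k-\overline{x}$ of the algorithm stays orthogonal to $\mathrm{Null}(A)$ in the relevant sense) or reading $\sigma_p^2(S)$ as the smallest positive eigenvalue of $M$. All of the remaining inequalities are purely monotone and require nothing beyond Lemma \ref{orth}.
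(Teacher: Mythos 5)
Your proposal follows essentially the same route as the paper: $Z_i$ being an orthogonal projection gives the upper bounds by $1$, the pointwise inequality $\mathbb{E}\le\max$ gives the orderings, exactness gives positivity, and item 3 is reduced to item 1 by passing to the marginal distribution $p_3^k$ on $[q]$ (your ``strictly positive combination'' is exactly the paper's joint-probability argument). One small internal slip: your claim that the minimum in \eqref{spec1} is ``attained on $\mathrm{Range}(A^{\top})$'' is backwards --- adding a $\mathrm{Null}(A)$-component only \emph{decreases} the Rayleigh quotient, which is precisely the degeneracy you then correctly diagnose in your final paragraph; the paper silently makes the same identification $\sigma_p^2(S)=\sigma^{+}_{\min}(\mathbb{E}_{i\sim p}[Z_i])$ without comment, so your explicit caveat that the minimum must be read over $\mathrm{Range}(A^{\top})\setminus\{0\}$ (where the relevant error vectors live) is a point in your favor rather than a gap relative to the paper.
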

\begin{proof}
	Let us show the relationship 1 firstly. By \eqref{spec1}, we deduce
	$$
	\begin{aligned}
	\sigma_{p}^{2}(S) & {=} \min _{v \notin \operatorname{Null}\left( A\right)} \frac{\left\| v\right\|_{\mathbb{E}_{i \sim p}\left[Z_i\right]}^{2}}{\|v\|_{}^{2}} \\
	& \stackrel{(\ref{EA})}{=} \min _{ v \notin \operatorname{Null}\left(\mathbb{E}_{i \sim p}\left[Z_i\right]\right)} \frac{\left\| v\right\|_{\mathbb{E}_{i \sim p}\left[Z_i\right]}^{2}}{\|v\|^2}=\sigma_{\min }^{+}\left(\mathbb{E}_{i \sim p}\left[Z_i\right]\right)>0 .
	\end{aligned}
	$$
On the other hand, we have
	$$
	\begin{aligned}
	\sigma_{p}^{2}(S) & {=} \min _{v \notin \operatorname{Null}\left(A\right)} \frac{\left\|v\right\|_{\mathbb{E}_{i \sim p}\left[Z_i\right]}^{2}}{\|v\|^2} \\
	& {=} \min _{v \notin \operatorname{Null}\left(A\right)} \frac{\mathbb{E}_{i \sim p}\left[\left\| v\right\|_{Z_i}^{2}\right]}{\|v\|^2} \\
	&\leq \min _{v \notin \operatorname{Null}\left( A\right)} \max _{i=1, \ldots, q} \frac{\left\| v\right\|_{Z_i}^{2}}{\|v\|^{2}}=\sigma_{\infty}^{2}(S) .
	\end{aligned}
	$$
	Last, by using the fact in Lemma \ref{orth} that the symmetric matrix $Z_i$ is an orthogonal projection, we get
	$$
	\sigma_{\infty}^{2}(S) = \min _{v \notin \mathrm{Null(A)}} \max _{i=1, \ldots, q} \frac{\left\| v\right\|_{Z_i}^{2}}{\|v\|^{2}} {\overset{\eqref{pequal}}{=}}\min _{v \notin \mathrm{Null(A)}} \max _{i=1, \ldots, q} \frac{\left\|Z_i v\right\|^{2}}{\left\| v\right\|^{2}} \leq \max _{i=1, \ldots, q} \frac{\left\| v\right\|^{2}}{\left\| v\right\|^{2}}=1 .
	$$
The relationship 2 follows from
	\begin{eqnarray*}
		\sigma^2_{p_1^k,\infty}(\beta_k,S)&= &\min _{v \notin \mathrm{Null(A)}}\mathbb{E}_{\tau_k\sim p_1^k}\left[ \max _{i\in\tau_k} \frac{\left\| v\right\|_{Z_i}^{2}}{\|v\|^{2}}\right]\\
		&\leq&  \min _{v \notin \mathrm{Null(A)}} \max _{i=1, \ldots, q} \frac{\left\| v\right\|_{Z_i}^{2}}{\|v\|^{2}}\\
		&=&\sigma_{\infty}^2(S).
	\end{eqnarray*}

\noindent It remains to show the relationship 3. For $\mathbb{E}_{\tau_k\sim p_1^k}\left[\mathbb{E}_{i \sim p^{k}_2}\left[Z_i\right]\right]$, using the concept of joint probability, we know there is $p_3^k\in\Delta_q^{\dagger}$ so that $\mathbb{E}_{\tau_k\sim p_1^k}\left[\mathbb{E}_{i \sim p^{k}_2}\left[Z_i\right]\right]=\mathbb{E}_{i \sim p_3^k}\left[Z_i\right].$ Thus, by repeating the argument for the relationship 1, we can show that $\sigma^2_{p_1^k,p_2^k}(\beta_k,S)> 0$ and $\sigma^2_{p_1^k,p_2^k}(\beta_k,S)\leq\sigma^2_{p_1^k,\infty}(S).$ This completes the proof.
\end{proof}

\subsection{Convergence of non-adaptive SBP}
Now, we are ready to present the linear convergence of the non-adaptive SBP method.
\begin{theorem}\label{main1}
	Let the probabilities $p\in\Delta_q^{\dagger}$ be given.  If $f$ is a $\mu$-strongly convex function such that the conditions in Lemma \ref{2016linear} holds, and the initialization $x^{0} \in \mathbb{R}^{n}$ and $x^{0}_{*} \in \partial f\left(x^{0}\right) \cap$
	$\mathcal{R}\left(A^{\top}\right)$, then the iterates in Algorithm 1 converge linearly in the sense that
	$$\mathbb{E}\left[D_{f}^{x^{k+1}_*}\left(x^{k+1}, \bar{x}\right)\right] \leqslant(1-\frac{\mu\gamma\sigma_p^2(S)}{2\|A\|^2}  )^{k+1} D_f^{x^0_*}(x^0,\overline{x}).$$
\end{theorem}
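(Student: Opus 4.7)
The plan is to combine the Pythagorean-type identity from Lemma~\ref{lemma2.1}, the $\mu$-strong convexity of $f$, the spectral bound in Lemma~\ref{spectrum1}, and the error bound from Lemma~\ref{2016linear} to extract a one-step contraction in Bregman distance, then iterate by the tower property.

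First, I would check an invariant: since $x^0_\ast\in \mathcal{R}(A^\top)$ and each update has the form $x^{k+1}_\ast = x^k_\ast - A^\top S_{i_k} y^k$ with $A^\top S_{i_k} y^k\in \mathcal{R}(A^\top)$, induction gives $x^k_\ast\in\partial f(x^k)\cap \mathcal{R}(A^\top)$ throughout, so the hypotheses of Lemma~\ref{2016linear} hold along the trajectory. Next, because $A\overline{x}=b$ implies $\overline{x}$ lies in the affine constraint set $C_k:=\{x:S_{i_k}^\top A x=S_{i_k}^\top b\}$, Lemma~\ref{lemma2.1} with $y=\overline{x}$ yields the pathwise descent
\begin{equation*}
D_f^{x^{k+1}_\ast}(x^{k+1},\overline{x}) \;\leq\; D_f^{x^k_\ast}(x^k,\overline{x}) - D_f^{x^k_\ast}(x^k,x^{k+1}).
\end{equation*}
In particular $D_f^{x^k_\ast}(x^k,\overline{x})\leq D_f^{x^0_\ast}(x^0,\overline{x})$ for every $k$, so the error bound \eqref{EB1} is in force at each iterate.

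The key step is to lower bound the progress $D_f^{x^k_\ast}(x^k,x^{k+1})$ by the Euclidean sketched loss $g_{i_k}(x^k)$. By $\mu$-strong convexity, $D_f^{x^k_\ast}(x^k,x^{k+1})\geq \tfrac{\mu}{2}\|x^{k+1}-x^k\|^2$. Since the Euclidean projection of $x^k$ onto the affine set $C_k$ minimizes $\|x-x^k\|^2$ over $C_k$ and attains the value $(Ax^k-b)^\top H_{i_k}(Ax^k-b)=g_{i_k}(x^k)$ (a direct pseudoinverse calculation using $M^\dagger M M^\dagger=M^\dagger$), the Bregman projection $x^{k+1}\in C_k$ must satisfy $\|x^{k+1}-x^k\|^2\geq g_{i_k}(x^k)$, hence $D_f^{x^k_\ast}(x^k,x^{k+1})\geq \tfrac{\mu}{2}g_{i_k}(x^k)$. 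This is the essential bridge between the non-Euclidean update and the Euclidean spectral machinery, and I expect it to be the main obstacle; everything else is assembly.

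Finally, taking the conditional expectation over $i_k\sim p$ and invoking Lemma~\ref{spectrum1} gives $\EE[g_{i_k}(x^k)\mid x^k]\geq \sigma_p^2(S)\|x^k-\overline{x}\|^2$; chaining the two-sided error bound $\|A x^k-b\|^2\leq \|A\|^2\|x^k-\overline{x}\|^2$ with \eqref{EB1} yields $\|x^k-\overline{x}\|^2\geq \tfrac{\gamma}{\|A\|^2}D_f^{x^k_\ast}(x^k,\overline{x})$. Substituting back produces the one-step contraction
\begin{equation*}
\EE\!\left[D_f^{x^{k+1}_\ast}(x^{k+1},\overline{x})\,\middle|\,x^k\right] \;\leq\; \left(1-\frac{\mu\gamma\sigma_p^2(S)}{2\|A\|^2}\right) D_f^{x^k_\ast}(x^k,\overline{x}),
\end{equation*}
and iterating via the tower property delivers the claimed linear rate.
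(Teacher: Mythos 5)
Your argument is correct and follows essentially the same route as the paper: the pathwise descent from Lemma \ref{lemma2.1} with $y=\overline{x}$, the strong-convexity bridge $D_f^{x^k_*}(x^k,x^{k+1})\ge\tfrac{\mu}{2}\|x^{k+1}-x^k\|^2\ge\tfrac{\mu}{2}g_{i_k}(x^k)$ via the Euclidean projection onto the sketched affine set, then Lemma \ref{spectrum1} and the error bound \eqref{EB1} to close the one-step contraction (this is exactly the content of Lemmas \ref{NT1} and \ref{NT2} and Theorem \ref{T11}). Your explicit induction that $x^k_*\in\partial f(x^k)\cap\mathcal{R}(A^\top)$ is a small point the paper leaves implicit, but otherwise the two proofs coincide.
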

\noindent
To show Theorem \ref{main1}, we first deduce the following sufficient descent property.
\begin{lemma}\label{NT1}
	Denote $H_i:=S_i(S_i^{\top}AA^{\top}S_i)^{\dagger}S_i^{\top}$, $g_i(x):=\|Ax-b\|^2_{H_i}$, and $p\in\Delta^{\dagger}_q$. Then we have,
	\begin{eqnarray}\label{inequal1}
		\mathbb{E}_{i_k\sim p}[D_f^{x^{k+1}_*}(x^{k+1},\overline{x})|x^k]\leq D_f^{x^k_*}(x^k,\overline{x})-\frac{\mu}{2}\mathbb{E}_{i_k\sim p}[g_{i_k}(x^k)].
	\end{eqnarray}
\end{lemma}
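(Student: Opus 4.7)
The plan is to chain three ingredients: the Bregman projection identity from Lemma \ref{lemma2.1}, the strong convexity of $f$, and the fact (from Lemma \ref{orth}) that $Z_{i_k}$ is an orthogonal projection. Throughout I keep $i_k$ fixed and argue deterministically, then take the conditional expectation at the very end.

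First I would invoke Lemma \ref{lemma2.1} with $C=\{x:S_{i_k}^{\top}Ax=S_{i_k}^{\top}b\}$ and $y=\overline{x}$. Since $A\overline{x}=b$, we have $\overline{x}\in C$, and because $x_*^{k+1}=x_*^k-A^{\top}S_{i_k}y^k$ is an admissible subgradient for $x^{k+1}=\Pi_C^{x_*^k}(x^k)$ (this was verified just before the statement, from the KKT conditions), the lemma yields
\begin{equation*}
D_f^{x_*^{k+1}}(x^{k+1},\overline{x})\;\leq\; D_f^{x_*^k}(x^k,\overline{x}) - D_f^{x_*^k}(x^k,x^{k+1}).
\end{equation*}
So the whole game reduces to bounding $D_f^{x_*^k}(x^k,x^{k+1})$ from below by $\tfrac{\mu}{2}g_{i_k}(x^k)$.

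Next I would use $\mu$-strong convexity of $f$, which immediately gives
\begin{equation*}
D_f^{x_*^k}(x^k,x^{k+1})\;\geq\;\tfrac{\mu}{2}\|x^{k+1}-x^k\|^2,
\end{equation*}
so the remaining task is to show the deterministic inequality $\|x^{k+1}-x^k\|^2\geq g_{i_k}(x^k)$. For this I set $u:=x^k-x^{k+1}$ and use the feasibility $S_{i_k}^{\top}Ax^{k+1}=S_{i_k}^{\top}b$ to get $S_{i_k}^{\top}Au=S_{i_k}^{\top}(Ax^k-b)$. Plugging this into the definition of $g_{i_k}$,
\begin{equation*}
g_{i_k}(x^k)=(Ax^k-b)^{\top}H_{i_k}(Ax^k-b)=u^{\top}A^{\top}S_{i_k}(S_{i_k}^{\top}AA^{\top}S_{i_k})^{\dagger}S_{i_k}^{\top}Au=u^{\top}Z_{i_k}u.
\end{equation*}
By Lemma \ref{orth}, $Z_{i_k}$ is an orthogonal projection, so $u^{\top}Z_{i_k}u=\|Z_{i_k}u\|^2\leq\|u\|^2=\|x^{k+1}-x^k\|^2$, as desired.

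Combining the three pieces gives the deterministic bound $D_f^{x_*^{k+1}}(x^{k+1},\overline{x})\leq D_f^{x_*^k}(x^k,\overline{x})-\tfrac{\mu}{2}g_{i_k}(x^k)$, and taking $\mathbb{E}_{i_k\sim p}[\cdot\,|\,x^k]$ on both sides yields \eqref{inequal1}. The only mildly delicate step is the last computation, where one must be careful to rewrite $g_{i_k}(x^k)$ in terms of the increment $u$ using the feasibility of $x^{k+1}$; everything else is a direct application of previously established lemmas.
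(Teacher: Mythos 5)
Your proof is correct and follows essentially the same route as the paper: Lemma \ref{lemma2.1} applied at $y=\overline{x}$, strong convexity to lower-bound $D_f^{x_*^k}(x^k,x^{k+1})$ by $\tfrac{\mu}{2}\|x^{k+1}-x^k\|^2$, and the identification of $g_{i_k}(x^k)$ with a squared distance bounded by $\|x^{k+1}-x^k\|^2$. The only (cosmetic) difference is in that last step: the paper introduces the Euclidean projection $\tilde{x}^k$ of $x^k$ onto the sketched hyperplane and computes $\|x^k-\tilde{x}^k\|^2=g_{i_k}(x^k)$ explicitly, whereas you reach the same inequality by writing $g_{i_k}(x^k)=u^{\top}Z_{i_k}u=\|Z_{i_k}u\|^2\leq\|u\|^2$ with $u=x^k-x^{k+1}$, using feasibility and Lemma \ref{orth}.
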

\begin{proof}
	Denote $\mathcal{L}_k:=\{x:S^{\top}_{i_k}Ax=S^{\top}_{i_k}b\}$ and the orthogonal projection of $x^k$ onto $\mathcal{L}_k$ by $\tilde{x}^k$. Using the strong convexity of $f$ and the fact that $x^{k+1}\in\mathcal{L}_k$, we have,
	\begin{eqnarray}\label{t1}
	D_f^{x^k_*}(x^k,x^{k+1})\geq\frac{\mu}{2}\|x^{k}-x^{k+1}\|^2\geq\frac{\mu}{2}\|x^k-\tilde{x}^{k}\|.
	\end{eqnarray}
	Note that $\tilde{x}^k$ is the orthogonal projection of $x^k$ onto $\mathcal{L}_k$, then we have
	$$\tilde{x}^k\in\arg\min_{x\in \RR^n} \|x-x^k\|^2, \quad\st\quad x\in\mathcal{L}_k.$$
Equivalently,	
	\begin{eqnarray}
	\tilde{x}^k=x^k-A^{\top}S_{i_k}(S_{i_k}AA^{\top}S_{i_k})^{\dagger}S_{i_k}^{\top}(Ax^k-b).
	\end{eqnarray}
	Thereby, we derive that
	\begin{eqnarray}
	\|x^{k}-\tilde{x}^{k}\|^2&=&\left(A x^{k}-b\right)^{\top} S_{i_k}\left(S_{i_k}^{\top} A A^{\top} S_{i_k}\right)^{\dagger} S_{i_k}^{\top}\left(A x^{k}-b\right)\nonumber\\
	&=&\left(A x^{k}-b\right)^{\top} H_{i_k}\left(A x^{k}-b\right)\nonumber\\
	&=&\left\|A x^{k}-b \right\|_{H_{i_k}}^{2}.
	\label{t2}
	\end{eqnarray}
\noindent
From (\ref{t1}), (\ref{t2}) and the expression of $g_i(x)$, we have
\begin{eqnarray}\label{ft1}
D^{x_*^{k}}_f\left(x^{k}, x^{k+1}\right) \geqslant\frac{\mu}{2} g_{i_k}\left(x^{k}\right).
\end{eqnarray}
 Involving (\ref{ineq1}) in Lemma \ref{lemma2.1} and using \eqref{ft1}, we get
	\begin{eqnarray}\label{inequall}
		D_f^{x^{k+1}_*}(x^{k+1},\overline{x})\leq D_f^{x^k_*}(x^k,\overline{x})-\frac{\mu}{2}g_{i_k}(x^k).
	\end{eqnarray}
Finally, taking expectation conditioned on $x^k$, we get the desired result.
\end{proof}

Based on \eqref{inequall}, we can also know that $D_{f}^{x^k_{*}}(x^k, \overline{x}) \leq D_{f}^{x^{0}_{*}}\left(x^{0}, \overline{x}\right)$, then if the conditions in Lemma \ref{2016linear} are satisfied and , $x^0_*\in\partial f(x^0)\cap\mathcal{R}(A^{\top}),$ we can conclude that the iterates $\{x^k\}$ generated by the SBP method keep $(x^k-\overline{x})\notin\textrm{Null}(A),$ except $x^k=\overline{x}$. Next ,we can show that the term $\mathbb{E}_{i_k\sim p}[g_{i_k}(x^k)]$ can be controlled by the residual $A x^{k}-b$, as shown below.
\begin{lemma}\label{NT2} If the conditions in Theorem \ref{main1} are held, then the iterates in Algorithm 1 satisfy
	\begin{eqnarray}\label{Le5}
\mathbb{E}_{i_k\sim p}[g_{i_k}(x^k)]\geq\frac{\sigma^2_p(S)}{\|A\|^2}  \cdot\left\|A x^{k}-b\right\|^{2}.
	\end{eqnarray}
\end{lemma}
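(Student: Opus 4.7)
The plan is to reduce the claim to the spectral lower bound already established in Lemma \ref{spectrum1}, and then convert an $\|x^k-\overline{x}\|$ bound into an $\|Ax^k-b\|$ bound via the operator norm of $A$.

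First I would rewrite the sketched loss so that $Z_{i_k}=A^{\top}H_{i_k}A$ appears instead of $H_{i_k}$. Since $b=A\overline{x}$, we have
\begin{equation*}
g_{i_k}(x^k)=(Ax^k-b)^{\top}H_{i_k}(Ax^k-b)=(x^k-\overline{x})^{\top}Z_{i_k}(x^k-\overline{x})=\|x^k-\overline{x}\|_{Z_{i_k}}^{2}.
\end{equation*}
Taking expectation with respect to $i_k\sim p$ and linearity gives
\begin{equation*}
\EE_{i_k\sim p}[g_{i_k}(x^k)]=\|x^k-\overline{x}\|_{\EE_{i\sim p}[Z_i]}^{2}.
\end{equation*}

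Next I would invoke the definition of $\sigma_p^2(S)$ in \eqref{spec1}. This requires $x^k-\overline{x}\notin\mathrm{Null}(A)$; that holds because the descent inequality \eqref{inequall} established in Lemma \ref{NT1} gives $D_f^{x_*^k}(x^k,\overline{x})\le D_f^{x_*^0}(x^0,\overline{x})$, so by Lemma \ref{2016linear} (together with $x_*^0\in\mathcal{R}(A^{\top})$ and the fact that $x_*^{k+1}=x_*^k-A^{\top}S_{i_k}y^k$ keeps $x_*^k\in\mathcal{R}(A^{\top})$ for every $k$), any $x^k\ne\overline{x}$ must satisfy $Ax^k\ne b$, hence $x^k-\overline{x}\notin\mathrm{Null}(A)$. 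Consequently
\begin{equation*}
\|x^k-\overline{x}\|_{\EE_{i\sim p}[Z_i]}^{2}\ge \sigma_p^2(S)\,\|x^k-\overline{x}\|^2.
\end{equation*}

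Finally, I would use the trivial operator-norm estimate
\begin{equation*}
\|Ax^k-b\|^2=\|A(x^k-\overline{x})\|^2\le \|A\|^2\,\|x^k-\overline{x}\|^2,
\end{equation*}
which rearranges to $\|x^k-\overline{x}\|^2\ge \|Ax^k-b\|^2/\|A\|^2$. Chaining the two bounds yields the desired
\begin{equation*}
\EE_{i_k\sim p}[g_{i_k}(x^k)]\ge \frac{\sigma_p^2(S)}{\|A\|^2}\,\|Ax^k-b\|^2,
\end{equation*}
with the edge case $x^k=\overline{x}$ being trivial since both sides vanish. The only delicate point is the verification that $x^k-\overline{x}\notin\mathrm{Null}(A)$ so that $\sigma_p^2(S)$ can be applied as a true lower bound; everything else is algebraic manipulation and a single application of the operator-norm inequality.
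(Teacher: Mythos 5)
Your proposal is correct and follows essentially the same route as the paper: the paper's proof simply cites the spectral lower bound $\EE_{i\sim p}[g_i(x^k)]\ge\sigma_p^2(S)\|x^k-\overline{x}\|^2$ from Lemma \ref{spectrum1} and then applies $\|Ax^k-b\|\le\|A\|\,\|x^k-\overline{x}\|$, exactly as you do. Your inline rederivation of that spectral bound and your explicit verification that $x^k-\overline{x}\notin\mathrm{Null}(A)$ just unpack details the paper delegates to Lemma \ref{spectrum1} and to the remark following Lemma \ref{2016linear}.
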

\begin{proof}
%	First of all, using the fact
%	$$E_{r^k\sim p^k}[\|\nu\|_M^2]=\|\nu\|^2_{E_{r_k\sim p_k}[M]}.$$
%	We derive that
%	\begin{eqnarray}
%		E_{r^k\sim p^k}[g_{r^k}(x^k)]&=&E_{r^k\sim p^k}\frac{\mu}{2}\|x^k-\overline{x}\|_{A^{\top}H_{r^k}A}\\&=&\frac{\mu}{2}\|x^k-\overline{x}\|_{E_{r^k\sim p^k}[A^{\top}H_{r^k}A]}\\
%	\end{eqnarray}
%Denote $Z_{i}=A^{\top} H_{i} A=A^{\top} S_{i}\left(S_{i}^{\top} A A^{\top} S_{i}\right)^{\dagger} S_i^{\top} A$, then
%$$E_{r^k\sim p^k}[A^{\top}H_{r^k}A]=E_{p^k}[Z_{r^k}]=E_{p^k}[Z].$$
Recall \eqref{spectrum2}, we have that
\begin{eqnarray*}
	\mathbb{E}_{i_k\sim p}[g_{i_k}(x^k)]&\geq&\sigma_p^2(S)\left\|x^{k}-\bar{x}\right\|^{2}\\
	&\geq&\frac{\sigma^2_p(S)}{\|A\|^2}  \cdot\left\|Ax^{k}-b\right\|^{2},\label{ps}
\end{eqnarray*}
where the last inequality follows from $\left\|A x^{k}-b\right\|=\left\|A x^{k}-A \bar{x}\right\| \leqslant\|A\| \cdot\left\|x^{k}-\bar{x}\right\|.$
\end{proof}
\noindent Combing the Lemma \ref{NT1} and Lemma \ref{NT2} above, we now show Theorem \ref{T11}. Theorem \ref{main1} will follows naturally.
\begin{theorem}\label{T11}
	Suppose that $p\in \Delta_{q}^{\dagger}$. If the conditions in Theorem \ref{main1} are held,
	then the non-adaptive SBP converges linearly in the sense that,
	$$\mathbb{E}\left[D_{f}^{x^{k+1}_*}\left(x^{k+1}, \bar{x}\right)\right] \leqslant(1-\frac{\mu\cdot\gamma\cdot\sigma_p^2(S)}{2\|A\|^2}  )\mathbb{E}[D_f^{x^k_*}(x^k,\overline{x})].$$
\end{theorem}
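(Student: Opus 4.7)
The plan is to chain together the three ingredients that have already been established in the preceding lemmas. Starting from the per-step descent of Lemma~\ref{NT1}, I would substitute in the lower bound on the expected sketched loss from Lemma~\ref{NT2}, then invoke the error bound of Lemma~\ref{2016linear} to replace the residual norm with a multiple of the Bregman distance, and finally apply the tower property of conditional expectation to obtain the claimed one-step contraction.

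Concretely, first take conditional expectation in Lemma~\ref{NT1} to get
\begin{equation*}
\mathbb{E}_{i_k\sim p}\bigl[D_f^{x^{k+1}_*}(x^{k+1},\overline{x})\,\big|\,x^k\bigr]\leq D_f^{x^k_*}(x^k,\overline{x})-\tfrac{\mu}{2}\mathbb{E}_{i_k\sim p}[g_{i_k}(x^k)].
\end{equation*}
Next, substitute the inequality $\mathbb{E}_{i_k\sim p}[g_{i_k}(x^k)]\geq \sigma_p^2(S)\|A\|^{-2}\|Ax^k-b\|^2$ from Lemma~\ref{NT2}, and then invoke the error bound $\gamma D_f^{x^k_*}(x^k,\overline{x})\leq \|Ax^k-b\|^2$ from Lemma~\ref{2016linear}. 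Combining these gives
\begin{equation*}
\mathbb{E}_{i_k\sim p}\bigl[D_f^{x^{k+1}_*}(x^{k+1},\overline{x})\,\big|\,x^k\bigr]\leq \Bigl(1-\tfrac{\mu\gamma\sigma_p^2(S)}{2\|A\|^2}\Bigr)D_f^{x^k_*}(x^k,\overline{x}).
\end{equation*}
Taking total expectation and using the tower property then yields the stated inequality.

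The one subtle point, and the main thing that needs to be verified rather than just quoted, is that the hypotheses enabling Lemma~\ref{NT2} (and the error bound of Lemma~\ref{2016linear}) actually hold for every iterate $x^k$, not only for the initialization. Two conditions must be propagated: (i) the monotone decrease $D_f^{x^k_*}(x^k,\overline{x})\leq D_f^{x^0_*}(x^0,\overline{x})$, and (ii) the range condition $x^k_*\in\mathcal{R}(A^\top)$. Condition~(i) follows by induction directly from the deterministic descent inequality \eqref{inequall} that was derived en route to Lemma~\ref{NT1}, before taking expectations. Condition~(ii) follows by induction from the update rule $x^{k+1}_*=x^k_*-A^\top S_{i_k}y^k$: starting from $x^0_*=0\in\mathcal{R}(A^\top)$, each increment $A^\top S_{i_k}y^k$ lies in $\mathcal{R}(A^\top)$, so $x^{k+1}_*\in\mathcal{R}(A^\top)$ as well. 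Both observations are already hinted at in the paragraph between Lemmas~\ref{NT1} and \ref{NT2}, so no new argument is needed, but it is worth calling them out explicitly because the linear convergence rate hinges on the error bound remaining valid throughout the iteration. Finally, Theorem~\ref{main1} follows from Theorem~\ref{T11} by a trivial induction on $k$.
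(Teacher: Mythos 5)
Your proposal is correct and follows essentially the same route as the paper: the paper's proof of Theorem \ref{T11} likewise combines the error bound \eqref{EB1} with \eqref{Le5} to obtain $\mathbb{E}_{i_k\sim p}[g_{i_k}(x^k)]\geq \gamma\sigma_p^2(S)\|A\|^{-2}D_f^{x^k_*}(x^k,\overline{x})$ and then plugs this into the descent inequality \eqref{inequal1}. Your explicit verification that the monotonicity $D_f^{x^k_*}(x^k,\overline{x})\leq D_f^{x^0_*}(x^0,\overline{x})$ and the range condition $x^k_*\in\mathcal{R}(A^\top)$ propagate by induction is a welcome tightening of a point the paper only remarks on informally after Lemma \ref{NT1}.
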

\begin{proof}
	Combing (\ref{EB1}) and (\ref{Le5}), we can obtain that
	\begin{eqnarray}\label{inequal2}
	\mathbb{E}_{i_k\sim p}\left[g_{i_k}\left(x^{k}\right)\right] \geq \frac{\gamma \cdot\sigma^2_p(S)}{\| A \|^{2}} D_{f}^{x^{k}_*}\left(x^{k}, \bar{x}\right).
	\end{eqnarray}
By combing with \eqref{inequal1} and \eqref{inequal2}, we finish the proof.
\end{proof}
\begin{table}
	\centering
	\caption{Special cases of adaptive SBP method by tuning different parameters.}
	\label{special cases}
	\resizebox{1\textwidth}{!}{
\begin{tabular}{|c|c|c|c|}
	\hline Sampling rules & Convergence Rate Bound & Rate Bound Shown In & Parameters\\\hline Fixed, $p_{i}^{k} \equiv p_{i}$ & $1-\frac{\mu\gamma\sigma_p^2(S)}{2\|A\|^2}$ & Theorem \ref{main1} &$\theta=0$, $\beta_k=1.$\\
	\hline Max-distance & $(1-\frac{\mu\gamma\sigma_{\infty}^2(S)}{2\|A\|^2}  )$ & Theorem \ref{max distance}& $\theta=1$, $\beta_k=q.$\\
	\hline Proportional adaptive rule & $(1-\frac{\mu\gamma\sigma_{u}^2(S)}{\|A\|^2}  )$ & Theorem \ref{propsa}& $p_1 \propto g_{i}$,$\beta_k=1$ \\
	\hline Capped & $\left(1-\frac{\mu\gamma(\theta \sigma_{\infty}^{2}(S)+(1-\theta) \sigma_{p}^{2}( S))}{2\|A\|^2}\right)$ & Theorem \ref{capt}& $\beta_k=q$ \\
	\hline
	Sketch Motzkin & $\left(1-\frac{\mu \gamma\cdot\sigma^2_{p_1^k,\infty}(\beta_k,S)}{2\|A\|^2}  \right)$ & Theorem \ref{KMot}& $\beta_k=q$\\
	\hline
\end{tabular}}
\end{table}
\subsection{Convergence of adaptive SBP}
We now consider the adaptive sampling strategy. The main result is presented below.
\begin{theorem}\label{main2}
	Consider  Algorithm 2 with probabilities $p_1^k\in\Delta_q^{\dagger}$, and $p^{k}_2\in\Delta_q^{\dagger}$, $\beta_k\in[q]$. If $f$ is a $\mu$-strongly convex function such that the conditions in Lemma \ref{2016linear} holds, and the initialization $x^{0} \in \mathbb{R}^{n}$ and $x^{0}_{*} \in \partial f\left(x^{0}\right) \cap$
	$\mathcal{R}\left(A^{\top}\right)$, then the iterates in Algorithm 2 converge linearly in the sense that
	$$\mathbb{E}\left[D_{f}^{x^{k+1}_* }\left(x^{k+1}, \bar{x}\right)\right] \leqslant\left(1-\frac{\mu \gamma\left(\theta\sigma^2_{p_1^k,\infty}(\beta_k,S)+(1-\theta)\sigma^2_{p_1^k,p^{k}_2}(\beta_k,S)\right)}{2\|A\|^2}  \right)^{k+1}D_f^{x^0_*}(x^0,\overline{x}).$$
\end{theorem}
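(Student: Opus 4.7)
The plan is to transplant the three-step argument used for Theorem \ref{main1} (descent inequality $\Rightarrow$ lower bound on expected sketched loss $\Rightarrow$ error bound) to the adaptive setting of Algorithm 2. The only genuinely new ingredient is the expected-loss lower bound, since the descent inequality from Lemma \ref{NT1} depends only on the update rule \eqref{urk1} and Lemma \ref{lemma2.1}, not on how $i_k$ is sampled; it therefore still yields, pointwise in $i_k$,
$$D_f^{x^{k+1}_*}(x^{k+1},\bar x)\le D_f^{x^k_*}(x^k,\bar x)-\tfrac{\mu}{2}g_{i_k}(x^k).$$

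First I would exploit the defining property of $\mathcal{W}_k$ in Step 5 of Algorithm 2: every index that can possibly be sampled in Step 6 belongs to $\mathcal{W}_k$, so deterministically, given $\tau_k$,
$$g_{i_k}(x^k)\ge \theta\,\max_{j\in\tau_k} g_j(x^k)+(1-\theta)\,\mathbb{E}_{j\sim p_2^k}[g_j(x^k)].$$
Since this bound is uniform over the support of $p_3^k$, it is preserved by the conditional expectation over $p_3^k$. Taking a further expectation over $\tau_k\sim p_1^k$ and applying the two inequalities in Lemma \ref{spectrum11} yields
$$\mathbb{E}\!\left[g_{i_k}(x^k)\mid x^k\right]\ge \bigl(\theta\,\sigma^2_{p_1^k,\infty}(\beta_k,S)+(1-\theta)\,\sigma^2_{p_1^k,p_2^k}(\beta_k,S)\bigr)\,\|x^k-\bar x\|^2.$$

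Next I would chain this with Lemma \ref{2016linear} exactly as in Lemma \ref{NT2}: using $\|Ax^k-b\|\le \|A\|\cdot\|x^k-\bar x\|$ together with $\gamma D_f^{x^k_*}(x^k,\bar x)\le \|Ax^k-b\|^2$ gives $\|x^k-\bar x\|^2\ge (\gamma/\|A\|^2)\,D_f^{x^k_*}(x^k,\bar x)$. Inserting this into the descent inequality and taking total expectation produces the one-step contraction
$$\mathbb{E}\!\left[D_f^{x^{k+1}_*}(x^{k+1},\bar x)\right]\le \Bigl(1-\tfrac{\mu\gamma\,(\theta\,\sigma^2_{p_1^k,\infty}(\beta_k,S)+(1-\theta)\,\sigma^2_{p_1^k,p_2^k}(\beta_k,S))}{2\|A\|^2}\Bigr)\,\mathbb{E}\!\left[D_f^{x^k_*}(x^k,\bar x)\right],$$
and iterating $k+1$ times delivers the claimed bound (taking a worst-case over $k$ when the sampling parameters vary with iteration, or simply compounding directly when they are held fixed).

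The main obstacle I anticipate is bookkeeping: every inequality above is predicated on the hypotheses of Lemma \ref{2016linear}, namely $x_*^k\in\partial f(x^k)\cap\mathcal{R}(A^\top)$ and $D_f^{x^k_*}(x^k,\bar x)\le D_f^{x^0_*}(x^0,\bar x)$, so these invariants must be carried along the trajectory. The descent inequality together with $g_{i_k}(x^k)\ge 0$ gives monotone decrease of the Bregman distance to $\bar x$ and hence the second invariant; the first follows by induction from $x_*^{k+1}=x_*^k-A^\top S_{i_k}y^k\in\mathcal{R}(A^\top)$ once $x_*^0\in\mathcal{R}(A^\top)$, which is exactly the initialization assumed in the theorem. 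Once these invariants are in place, Lemma \ref{spectrum11} and Lemma \ref{2016linear} apply conditionally on $x^k$ and the proof closes by the tower property of expectation.
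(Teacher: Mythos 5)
Your proposal is correct and follows essentially the same route as the paper: the paper likewise reuses the descent inequality from Lemma \ref{NT1}, replaces only the expected-loss lower bound \eqref{Le5} with the adaptive version (its Lemma \ref{Len}, proved exactly via the defining inequality of $\mathcal{W}_k$ and Lemma \ref{spectrum11}), and then chains with Lemma \ref{2016linear} as in Theorem \ref{main1}. Your explicit tracking of the invariants $x_*^k\in\mathcal{R}(A^\top)$ and $D_f^{x^k_*}(x^k,\bar x)\le D_f^{x^0_*}(x^0,\bar x)$ is a point of care the paper leaves implicit, but it is not a different argument.
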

\noindent The main idea of the proof for the Theorem \ref{main2} is similar to that of Theorem \ref{main1} except \eqref{Le5} is changed when applying the adaptive sampling rule.
\begin{lemma}\label{Len} Assume the conditions in Theorem \ref{main2} are held, let the sequence $\{x^k\}$ be generated by  Algorithm 2. Then, we have
	\begin{eqnarray*}
	\mathbb{E}\left[g_{i_k}(x^k)\right]\geq\frac{\left(\theta\sigma^2_{p_1^k,\infty}(\beta_k,S)+(1-\theta)\sigma^2_{p_1^k,p^{k}_2}(\beta_k,S)\right)}{\|A\|^2}  \cdot\left\|A x^{k}-b\right\|^{2}.
	\end{eqnarray*}
\end{lemma}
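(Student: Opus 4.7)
The plan is to chain three inequalities, mirroring the logic of Lemma \ref{NT2} but now accounting for the two-stage adaptive sampling in Algorithm 2. First I would exploit the defining property of the index set $\mathcal{W}_k$. By construction, every $i\in\mathcal{W}_k$ satisfies
\begin{equation*}
g_i(x^k)\;\geq\;\theta\max_{j\in\tau_k}g_j(x^k)+(1-\theta)\,\mathbb{E}_{j\sim p_2^k}[g_j(x^k)].
\end{equation*}
Since $i_k\sim p_3^k$ with $\mathbf{supp}(p_3^k)\subset\mathcal{W}_k$, taking conditional expectation with $\tau_k$ (and hence the right-hand side) fixed yields
\begin{equation*}
\mathbb{E}_{i_k\sim p_3^k}\!\bigl[g_{i_k}(x^k)\mid\tau_k,x^k\bigr]\;\geq\;\theta\max_{j\in\tau_k}g_j(x^k)+(1-\theta)\,\mathbb{E}_{j\sim p_2^k}[g_j(x^k)].
\end{equation*}

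Next I would take expectation over $\tau_k\sim p_1^k$ and apply the linearity of expectation together with Lemma \ref{spectrum11}, which supplies the two key lower bounds
\begin{equation*}
\mathbb{E}_{\tau_k\sim p_1^k}\!\Bigl[\max_{i\in\tau_k}g_i(x^k)\Bigr]\geq\sigma_{p_1^k,\infty}^{2}(\beta_k,S)\|x^k-\bar{x}\|^2,
\end{equation*}
\begin{equation*}
\mathbb{E}_{\tau_k\sim p_1^k}\!\bigl[\mathbb{E}_{i\sim p_2^k}[g_i(x^k)]\bigr]\geq\sigma_{p_1^k,p_2^k}^{2}(\beta_k,S)\|x^k-\bar{x}\|^2.
\end{equation*}
Combining these with the convex combination from the previous display gives
\begin{equation*}
\mathbb{E}[g_{i_k}(x^k)]\geq\bigl(\theta\sigma_{p_1^k,\infty}^{2}(\beta_k,S)+(1-\theta)\sigma_{p_1^k,p_2^k}^{2}(\beta_k,S)\bigr)\|x^k-\bar{x}\|^2.
\end{equation*}

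Finally I would pass from $\|x^k-\bar{x}\|^2$ to the residual via the elementary bound $\|Ax^k-b\|=\|A(x^k-\bar{x})\|\leq\|A\|\cdot\|x^k-\bar{x}\|$, producing the claimed factor $1/\|A\|^2$. I do not expect any real obstacle: the only subtle point is justifying that one may pull $\theta\max_{j\in\tau_k}g_j(x^k)+(1-\theta)\mathbb{E}_{j\sim p_2^k}[g_j(x^k)]$ out of the $\mathbb{E}_{i_k\sim p_3^k}$ expectation, which is immediate because that quantity is measurable with respect to $\tau_k$ and $x^k$ and does not depend on $i_k$. Once that is noted, the remainder is a direct invocation of Lemma \ref{spectrum11} and the operator-norm bound on $A$.
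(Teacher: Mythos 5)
Your proposal is correct and follows essentially the same route as the paper: lower-bound $g_{i_k}(x^k)$ by the threshold defining $\mathcal{W}_k$, use that this threshold is independent of $i_k$ so the $p_3^k$-expectation collapses (the paper writes this as $\sum_{i_k\in\mathcal{W}_k}p_{3_{i_k}}^k=1$), then invoke Lemma \ref{spectrum11} and the bound $\|Ax^k-b\|\leq\|A\|\,\|x^k-\bar{x}\|$. No gaps.
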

\begin{proof}
	When $i_k$ is sampled with the adaptive rule in Algorithm 2.
$$	\begin{aligned}
		\mathbb{E}\left[g_{i_k}(x^k)\right]&=\sum_{\tau_k\sim p_1^k} \sum_{i_k\in\mathcal{W}_k}g_{i_k}(x^k)p_{3_{i_k}}^k\\
		&\geq  \sum_{\tau_k\sim p_1^k} \sum_{i_k \in \mathcal{W}_{k}}\left(\theta \max _{j\in\tau_k} g_{j}\left(x^{k}\right)+(1-\theta) \mathbb{E}_{j \sim p_2^k}\left[g_{j}\left(x^{k}\right)\right]\right) p_{3_{i_k}}^{k} \\
		&=\sum_{\tau_k\sim p_1^k}\left(  \theta \max _{j\in\tau_k} g_{j}\left(x^{k}\right)+(1-\theta) \mathbb{E}_{j \sim p_2^k}\left[g_{j}\left(x^{k}\right)\right]\right) \\
		& \geq\left(\theta \sigma_{p_1^k,\infty}^{2}( \beta_k,S)+(1-\theta) \sigma_{p_1^k,p_2^k}^{2}(\beta_k,S)\right)\left\|x^{k}-x^{*}\right\|^2 .
	\end{aligned}$$
	As a result, 	$$\mathbb{E}\left[g_{i_k}(x^k)\right]\geq\frac{\theta \sigma_{p_1^k,\infty}^{2}(\beta_k, S)+(1-\theta) \sigma_{p_1^k,p_2^k}^{2}(\beta_k,S)}{\|A\|^2}\left\|Ax^{k}-b\right\|^2.$$
\end{proof}
\noindent Then, by using the same idea of proving Theorem \ref{main1}, we can finish the proof of Theorem \ref{main2}.
\subsection{Special cases for adaptive sampling rule}
In this subsection, we will discuss four special cases of the adaptive sampling rule: max-distance, proportional to the sketched loss rule, capped adaptive rule, and sketch Motzkin rule. They can be generalized from Algorithm 2. For simplicity, we assume the conditions below are satisfied:$f$ is a $\mu$-strongly convex function such that the conditions in Lemma \ref{2016linear} holds, and the initialization $x^{0} \in \mathbb{R}^{n}$ and $x^{0}_{*} \in \partial f\left(x^{0}\right) \cap$
	$\mathcal{R}\left(A^{\top}\right)$. Results are summarised in Table \ref{special cases}.
\subsubsection{Max-distance}
\begin{table}
	\label{a3}
	\begin{tabular}{l}
		\hline
		Algorithm 3 Max-distance SBP method \\
		\hline 1: \textbf{Input}: $x^{0}=x^{0}_*=0 \in \mathbb{R}^{n}$, strongly convex function $f$, $A \in \mathbb{R}^{m \times n}, b \in \mathbb{R}^{m},$ and $S=\left[S_{1}, \ldots, S_{q}\right]$ \\
		$\quad\quad\quad\quad S_i\in\mathbb{R}^{m\times\tau},i=1,\cdots,q$\\
		2: for $k=0,1,2, \ldots$ do \\
		3: $\quad g_{i}\left(x^{k}\right)=\left\|A x^{k}-b\right\|_{H_{i}}$ for $i=1, \ldots, q,$ where $H_{i}:=S_i(S_{i}^{\top}AA^{\top}S_{i})^{\dagger}S_{i}^{\top}$ \\
        4: $\quad i_{k}=\arg \max _{i=1, \ldots, q} g_{i}\left(x^{k}\right)$ \\
        5: $\quad y^k\in\arg\min_{y\in\mathbb{R}^{\tau}} f^*(x_*^k-A^{\top}S_{i_k}y)+\langle S_{i_k}^{\top}b, y\rangle$\\
		6: $\quad x^{k+1}_*=x_*^k-A^{\top}S_{i_k}y^k$\\
		7: $\quad  x^{k+1}=\nabla f^*(x_*^{k+1})$\\
		8: output: last iterate $x^{k+1}$\\
		\hline	
	\end{tabular}
\end{table}
When $\theta=1$, $\beta_k=q$, the adaptive sampling rule becomes the max-distance\cite{Gower2019}. Theorem below provides a convergence guarantee for it.
\begin{theorem}\label{max distance}
Let $\{x^k\}$ be generated by Algorithm 3. Then, we have
	$$\mathbb{E}\left[D_{f}^{x^{k+1}_*}\left(x^{k+1}, \bar{x}\right)\right] \leqslant(1-\frac{\mu\gamma\sigma_{\infty}^2(S)}{2\|A\|^2}  )^{k+1}D_f^{x^0_*}(x^0,\overline{x}),$$
where $\sigma^2_{\infty}(S)$ is defined as in \eqref{spec2}.
\end{theorem}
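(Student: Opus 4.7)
The plan is to derive Theorem \ref{max distance} as a direct specialization of Theorem \ref{main2} with parameters $\theta=1$ and $\beta_k = q$. The key observation is that Algorithm 3 is exactly what Algorithm 2 collapses to under this choice of parameters, so once the equivalence of the two sampling schemes is verified, the convergence bound is simply obtained by substituting these values into the rate expression from Theorem \ref{main2}.

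First I would argue the algorithmic equivalence. When $\beta_k = q$, the collection $\binom{[q]}{\beta_k}$ is the singleton $\{[q]\}$, so $\tau_k = [q]$ is chosen deterministically and the distribution $p_1^k$ is a point mass; in particular, any expectation $\mathbb{E}_{\tau_k \sim p_1^k}[\,\cdot\,]$ is a no-op. With $\theta = 1$, the set $\mathcal{W}_k$ in line 5 of Algorithm 2 becomes
\begin{equation*}
\mathcal{W}_k = \Bigl\{ i \in [q] : g_i(x^k) \geq \max_{j\in[q]} g_j(x^k) \Bigr\},
\end{equation*}
which is precisely the set of maximizers of $g_i(x^k)$. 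Therefore sampling $i_k \sim p_3^k$ with $\textbf{supp}(p_3^k) \subset \mathcal{W}_k$ is the same as picking any $i_k \in \arg\max_i g_i(x^k)$, which is line 4 of Algorithm 3.

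Next I would specialize the spectral constants. With $\tau_k = [q]$ deterministically, the definition \eqref{spectrum4} gives
\begin{equation*}
\sigma_{p_1^k,\infty}^2(q,S) = \min_{v \notin \mathrm{Null}(A)} \max_{i=1,\ldots,q} \frac{\|v\|_{Z_i}^2}{\|v\|^2} = \sigma_\infty^2(S),
\end{equation*}
so the first term inside the parentheses of the rate in Theorem \ref{main2} becomes $\theta \cdot \sigma_\infty^2(S) = \sigma_\infty^2(S)$, while the second term vanishes because $1-\theta = 0$. Plugging this into the bound from Theorem \ref{main2} yields
\begin{equation*}
\mathbb{E}\bigl[D_f^{x^{k+1}_*}(x^{k+1}, \bar{x})\bigr] \leq \Bigl(1 - \tfrac{\mu\gamma \sigma_\infty^2(S)}{2\|A\|^2}\Bigr)^{k+1} D_f^{x^0_*}(x^0, \bar{x}),
\end{equation*}
which is the claimed rate. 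There is essentially no obstacle here beyond bookkeeping; the main care is to confirm that the max-distance rule in Algorithm 3 is indeed a valid realization of the adaptive rule in Algorithm 2 with $\theta=1,\beta_k=q$, so that Theorem \ref{main2} applies verbatim.
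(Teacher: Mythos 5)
Your proposal is correct and matches the paper's intent exactly: the paper presents Theorem \ref{max distance} as the specialization of Theorem \ref{main2} to $\theta=1$, $\beta_k=q$ (as recorded in Table \ref{special cases}), without writing out the details you supply. Your verification that $\binom{[q]}{q}$ is a singleton, that $\mathcal{W}_k$ reduces to the argmax set, and that $\sigma_{p_1^k,\infty}^2(q,S)=\sigma_\infty^2(S)$ is precisely the bookkeeping the paper leaves implicit.
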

\noindent Furthermore, we show that the convergence of the max-distance method is strictly faster than the method using fixed probability sampling rule.

\begin{theorem}Let $p \in \Delta_{q}^{\dagger}$ where $p_{i}>0$ for all $i=1, \ldots, q .$ Let $\sigma_{p}^{2}(S)$ be defined as in \eqref{spec1}, and define
\begin{eqnarray}\label{eta}
\eta := \frac{1}{\max _{i=1, \ldots, q} \sum_{j=1, j \neq i}^{q} p_{j}}>1.
\end{eqnarray}
Let $\{x^k\}$ be generated by Algorithm 3. Then, we have
$$
\mathbb{E}\left[D_{f}^{x^{k+1}_*}\left(x^{k+1}, \bar{x}\right) \mid x^k\right] \leq\left(1-\frac{\eta\mu\gamma \sigma_{p}^{2}( S)}{2\|A\|^2}\right)D_{f}^{x^{k}_*}\left(x^{k}, \bar{x}\right).
$$
\end{theorem}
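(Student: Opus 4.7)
The plan is to compare, step by step, the (deterministic) descent achieved by the max-distance rule against what the fixed-probability analysis of Theorem \ref{main1} would extract from a \emph{hypothetical} sampling distribution $p\in\Delta_q^{\dagger}$. Since $i_k=\arg\max_i g_i(x^k)$, we always have $g_{i_k}(x^k)=\max_i g_i(x^k)$, whereas the Theorem \ref{main1} template uses $\mathbb{E}_{i\sim p}[g_i(x^k)]$. The entire task reduces to showing the pointwise improvement
$$g_{i_k}(x^k)\;\geq\;\eta\cdot \mathbb{E}_{i\sim p}\bigl[g_i(x^k)\bigr].$$
A naive max-versus-average bound only yields a factor of $1$, so the gain must come from extra structure beyond ``max is at least average''.

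That extra structure is Lemma \ref{S0}, which gives $g_{i_{k-1}}(x^k)=0$. This collapses the expectation to a sum over indices different from $i_{k-1}$, namely $\mathbb{E}_{i\sim p}[g_i(x^k)]=\sum_{i\neq i_{k-1}} p_i\,g_i(x^k)$. Bounding the maximum of the remaining nonnegative entries by their weighted average then yields
$$\max_i g_i(x^k)\;\geq\;\max_{i\neq i_{k-1}} g_i(x^k)\;\geq\;\frac{\sum_{i\neq i_{k-1}} p_i\,g_i(x^k)}{\sum_{i\neq i_{k-1}} p_i}\;\geq\;\eta\cdot\mathbb{E}_{i\sim p}\bigl[g_i(x^k)\bigr],$$
where the last inequality uses $\sum_{i\neq i_{k-1}} p_i\leq \max_j \sum_{i\neq j} p_i = 1/\eta$ by the definition \eqref{eta} of $\eta$.

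With this step-gain estimate in hand, the rest of the argument is a repackaging of tools already established. Lemma \ref{spectrum1} provides $\mathbb{E}_{i\sim p}[g_i(x^k)]\geq \sigma_p^2(S)\|x^k-\bar x\|^2$; the trivial estimate $\|Ax^k-b\|^2\leq \|A\|^2\|x^k-\bar x\|^2$ combined with the error bound in Lemma \ref{2016linear} yields $\|x^k-\bar x\|^2\geq (\gamma/\|A\|^2)D_f^{x^k_*}(x^k,\bar x)$; and the descent inequality \eqref{inequall} from Lemma \ref{NT1} gives $D_f^{x^{k+1}_*}(x^{k+1},\bar x)\leq D_f^{x^k_*}(x^k,\bar x)-\tfrac{\mu}{2}g_{i_k}(x^k)$. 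Chaining these three estimates with the $\eta$-bound on $g_{i_k}(x^k)$ produces the stated linear contraction factor $1-\eta\mu\gamma\sigma_p^2(S)/(2\|A\|^2)$, and the conditional expectation is automatic since $i_k$ is a deterministic function of $x^k$ under the max-distance rule.

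The only genuine obstacle is the short manipulation in the middle paragraph that converts the ``previously chosen index has zero sketched loss'' property of Lemma \ref{S0} into the $\eta$ improvement; the remainder is mechanical. A minor caveat is the base case $k=0$, where no previous iterate exists, so the argument degenerates to the $\eta=1$ rate of Theorem \ref{main1} for that first step; this is harmless and can be either flagged as a one-step loss or absorbed into the conditional-expectation formulation for $k\geq 1$.
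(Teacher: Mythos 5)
Your proposal is correct and follows essentially the same route as the paper: both hinge on Lemma \ref{S0} to zero out the previously selected index, bound the maximum sketched loss by $\eta$ times the weighted average over the remaining indices, and then chain with Lemma \ref{spectrum1}, the error bound \eqref{EB1}, and the descent inequality \eqref{inequall}. Your explicit flagging of the $k=0$ base case (where no previous index exists and the argument degrades to the $\eta=1$ rate) is a small point the paper's own proof glosses over by working with $g_j(x^{k+1})$ and the index $i_k$ just used.
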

\begin{proof}
 In Lemma \ref{S0}, recall that $g_{i_{k}}\left(x^{k+1}\right)=0$. Then, we  have
\begin{eqnarray}\label{ps1}
\mathbb{E}_{j \sim p}\left[g_{j}\left(x^{k+1}\right)\right] &=&\sum_{j=1, j \neq i_{k}}^{q} p_{j} g_{j}\left(x^{k+1}\right)\nonumber \\
& \leq&\left(\max _{j=1, \ldots, q} g_{j}\left(x^{k+1}\right)\right)\left(\sum_{j=1, j \neq i_{k}}^{q} p_{j}\right)\nonumber \\
& \leq&\left(\max _{j=1, \ldots, q} g_{j}\left(x^{k+1}\right)\right)\left(\max _{i=1, \ldots, q} \sum_{j=1, j \neq i}^{q} p_{j}\right)\nonumber \\
& =& \frac{\max _{j=1, \ldots, q} g_{j}\left(x^{k+1}\right)}{\eta}.
\end{eqnarray}

\noindent Then, by \eqref{EB1}, \eqref{Le5} and \eqref{ps1} , we deduce
\begin{eqnarray}
\frac{\max _{j=1, \ldots, q} g_{j}\left(x^{k+1}\right)}{\eta}\geq\mathbb{E}_{j\sim p}[g_{j}(x^k)]\geq\frac{\sigma^2_p(S)}{\|A\|^2}  \cdot\left\|Ax^{k}-b\right\|^{2}\geq \frac{\gamma\sigma^2_p(S)}{\|A\|^2}\cdot D_{f}^{x^k_{*}}(x^k, \overline{x}).\label{ps}
\end{eqnarray}
Last, by using \eqref{inequall},  we get
	$$\mathbb{E}\left[D_{f}^{x^{k+1}_*}\left(x^{k+1}, \bar{x}\right) \mid x^k\right] \leqslant(1-\frac{\eta\mu\gamma\sigma_p^2(S)}{2\|A\|^2}  )D_f^{x^k_*}(x^k,\overline{x}),$$
which is strictly less than the convergence rate of the method with fixed probability sampling rule in Theorem \ref{main1}.
\end{proof}
\subsubsection{Proportional to the sketched loss}
When $\beta_k=1$, $p_1=\frac{g(x^k)}{\|g(x^k)\|_1} $, the adaptive sampling rule \cite{Gower2019} can be achieved which is a kind of the proportional to the sketched loss adaptive rule where indices are sampled with the probabilities proportional to the sketched loss values. For this sampling rule, we can also derive a convergence rate, which is as least twice faster than the method with the uniform sampling rule.
\begin{table}
	\label{a4}
	\begin{tabular}{l}
		\hline
		Algorithm 4 Proportional to the sketched loss SBP method\\
		\hline 1: \textbf{Input}: $x^{0}=x^{0}_*=0 \in \mathbb{R}^{n}$, srongly convex function $f$, $A \in \mathbb{R}^{m \times n}, b \in \mathbb{R}^{m},$ and $S=\left[S_{1}, \ldots, S_{q}\right]$ \\
		$\quad\quad\quad\quad S_i\in\mathbb{R}^{m\times\tau},i=1,\cdots,q$\\
		2: for $k=0,1,2, \ldots$ do \\
		3: $\quad g_{i}\left(x^{k}\right)=\left\|A x^{k}-b\right\|_{H_{i}}$ for $i=1, \ldots, q,$ where $H_{i}:=S_i(S_{i}^{\top}AA^{\top}S_{i})^{\dagger}S_{i}^{\top}$ \\
		4: $\quad i_{k}\sim p^k$, $p^k=\frac{g(x^k)}{\|g(x^k)\|_1}$ \\
	    5: $\quad y^k\in\arg\min_{y\in\mathbb{R}^{\tau}} f^*(x_*^k-A^{\top}S_{i_k}y)+\langle S_{i_k}^{\top}b, y\rangle$\\
	    6: $\quad x^{k+1}_*=x_*^k-A^{\top}S_{i_k}y^k$\\
		7: $\quad  x^{k+1}=\nabla f^*(x_*^{k+1})$\\
		8: output: last iterate $x^{k+1}$\\
		\hline	
	\end{tabular}
\end{table}
\begin{theorem}\label{propsa}
	Consider Algorithm 4 with $p^{k}=\frac{g\left(x^{k}\right)}{\left\|g\left(x^{k}\right)\right\|_{1}} .$ Define $u=\left(\frac{1}{q}, \ldots, \frac{1}{q}\right) \in \Delta_{q}$. Then, we have
$$
\mathbb{E}\left[D_{f}^{x^{k+1}_*}\left(x^{k+1}, \bar{x}\right) \right] \leq\left(1-\frac{\mu\gamma \left(1+q^{2} \mathbb{V} \mathbb{A} \mathbb{R}_{i \sim u}\left[p_{i}^{k}\right]\right) \sigma_{u}^{2}( S)}{2\|A\|^2}\right)^{k}D_{f}^{x^{0}_*}\left(x^{1}, \bar{x}\right), k\geq0,
$$
where $\mathbb{V A} \mathbb{R}_{i \sim u}[\cdot]$ denotes the variance taken with respect to the uniform distribution
$$
\mathbb{V A R}_{i \sim u}\left[v_{i}\right] \stackrel{\triangle}{=} \frac{1}{q} \sum_{i=1}^{q}\left(v_{i}-\frac{1}{q} \sum_{s=1}^{q} v_{s}\right)^2, \quad \forall v \in \mathbb{R}^{q}.
$$
Moreover, we have
$$
\mathbb{E}\left[D_{f}^{x^{k+1}_*}\left(x^{k+1}, \bar{x}\right)\right] \leq\left(1-\frac{\mu\gamma\sigma_{u}^{2}( S)}{\|A\|^2}\right)^kD_{f}^{x^{0}_*}\left(x^{1}, \bar{x}\right).
$$
\end{theorem}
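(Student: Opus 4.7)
The plan is to specialize the sufficient-descent inequality of Lemma \ref{NT1} to the weights $p_i^k = g_i(x^k)/\|g(x^k)\|_1$, for which a direct algebraic identity converts the per-iteration conditional expectation of the sketched loss into the uniform expectation inflated by a variance term. First I would compute, using $p_i^k g_i(x^k) = g_i(x^k)^2/\|g(x^k)\|_1$,
\begin{equation*}
\mathbb{E}_{i_k \sim p^k}\bigl[g_{i_k}(x^k)\bigr]
= \sum_{i=1}^q \frac{g_i(x^k)^2}{\|g(x^k)\|_1}
= \|g(x^k)\|_1\sum_{i=1}^q (p_i^k)^2.
\end{equation*}
Combining the elementary identity $\sum_{i=1}^q (p_i^k)^2 = q\,\mathbb{VAR}_{i\sim u}[p_i^k] + 1/q$ with $\|g(x^k)\|_1 = q\,\mathbb{E}_{i\sim u}[g_i(x^k)]$ then rearranges the above into the key equality
\begin{equation*}
\mathbb{E}_{i_k \sim p^k}\bigl[g_{i_k}(x^k)\bigr] = \bigl(1 + q^2\,\mathbb{VAR}_{i\sim u}[p_i^k]\bigr)\,\mathbb{E}_{i\sim u}[g_i(x^k)].
\end{equation*}

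Next, I would invoke Lemma \ref{spectrum1} with $p = u$ to obtain $\mathbb{E}_{i\sim u}[g_i(x^k)] \ge \sigma_u^2(S)\|x^k-\bar x\|^2$, chain with the operator-norm bound $\|x^k-\bar x\|^2 \ge \|Ax^k-b\|^2/\|A\|^2$, and apply the error bound of Lemma \ref{2016linear} to conclude
\begin{equation*}
\mathbb{E}_{i_k \sim p^k}\bigl[g_{i_k}(x^k)\bigr] \geq \bigl(1 + q^2\,\mathbb{VAR}_{i\sim u}[p_i^k]\bigr)\,\frac{\gamma\,\sigma_u^2(S)}{\|A\|^2}\,D_f^{x^k_*}(x^k,\bar x).
\end{equation*}
Plugging this into Lemma \ref{NT1}, taking total expectation, and iterating across $k$ delivers the first (variance-inflated) contraction. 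The \emph{moreover} statement then follows by dropping the nonnegative variance term, since $q^2\,\mathbb{VAR}_{i\sim u}[p_i^k] \ge 0$ gives a trivial but convenient lower bound on the per-step contraction factor.

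The main technical obstacle, as in the proof of Theorem \ref{T11}, is to maintain the monotonicity hypothesis $D_f^{x^k_*}(x^k,\bar x) \le D_f^{x^0_*}(x^0,\bar x)$ along the iterates so that the error bound of Lemma \ref{2016linear} remains applicable at each step; this is handled deterministically by the per-step descent \eqref{inequall}, which uses only strong convexity of $f$ together with $g_{i_k}(x^k)\ge 0$. Beyond that bookkeeping, the argument is essentially computational: the observation $p_i^k \propto g_i(x^k)$ reduces the conditional expectation to $\|g(x^k)\|_2^2/\|g(x^k)\|_1$, and the Cauchy--Schwarz bound $\|g(x^k)\|_2^2 \ge \|g(x^k)\|_1^2/q$ (equivalent to nonnegativity of the variance above) is precisely what upgrades the proportional rule past the uniform one.
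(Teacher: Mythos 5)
Your derivation of the first (variance-inflated) contraction is correct and is essentially the paper's own argument: the identity $\mathbb{E}_{i\sim p^k}[g_i(x^k)] = \bigl(1+q^2\,\mathbb{VAR}_{i\sim u}[p_i^k]\bigr)\mathbb{E}_{i\sim u}[g_i(x^k)]$, followed by Lemma \ref{spectrum1} with $p=u$, the bound $\|Ax^k-b\|\le\|A\|\,\|x^k-\bar x\|$, the error bound \eqref{EB1}, and the descent inequality \eqref{inequall}, is exactly the chain used in the paper.

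However, your proof of the \emph{moreover} statement has a genuine gap. Dropping the nonnegative variance term, i.e.\ using only $q^2\,\mathbb{VAR}_{i\sim u}[p_i^k]\ge 0$, yields the per-step factor $1-\frac{\mu\gamma\sigma_u^2(S)}{2\|A\|^2}$ — the \emph{same} rate as the uniform rule — whereas the claimed rate is $1-\frac{\mu\gamma\sigma_u^2(S)}{\|A\|^2}$, which is twice as aggressive and requires the lower bound $1+q^2\,\mathbb{VAR}_{i\sim u}[p_i^k]\ge 2$ for $k\ge 1$. The missing ingredient is Lemma \ref{S0}: since $g_{i_k}(x^{k+1})=0$, the proportional weights at the next iteration satisfy $p_{i_k}^{k+1}=0$, hence
$$
\mathbb{VAR}_{i\sim u}\bigl[p_i^{k+1}\bigr]=\frac{1}{q}\sum_{i=1}^q\Bigl(p_i^{k+1}-\tfrac{1}{q}\Bigr)^2\;\ge\;\frac{1}{q}\Bigl(p_{i_k}^{k+1}-\tfrac{1}{q}\Bigr)^2=\frac{1}{q^2},
$$
which gives $1+q^2\,\mathbb{VAR}_{i\sim u}[p_i^{k+1}]\ge 2$ and therefore the doubled contraction from iteration $1$ onward (this is also why the bound in the statement is anchored at $x^1$ with exponent $k$ rather than at $x^0$ with exponent $k+1$). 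Your closing remark that nonnegativity of the variance "upgrades the proportional rule past the uniform one" is likewise imprecise: nonnegativity only shows the proportional rule is no worse; the strict factor-of-two improvement comes entirely from the vanishing of the previously selected sketched loss.
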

\begin{proof}
Note that for $i \sim u$.  We derive that
$$
\begin{array}{l}
\qquad \mathbb{V A R}_{i\sim u}\left[g_{i}\left(x^{k}\right)\right]=\mathbb{E}_{i\sim u}\left[\left(g_{i}\left(x^{k}\right)\right)^{2}\right]-\left(\mathbb{E}_{i\sim u}\left[g_{i}\left(x^{k}\right)\right]\right)^{2}=\frac{1}{q} \sum_{i=1}^q\left(g_{i}\left(x^{k}\right)\right)^{2}-\frac{1}{q^{2}}\left(\sum_{i=1}^{q} g_{i}\left(x^{k}\right)\right)^{2} . \\
\text{Thus, we  have}, \\
\qquad \begin{aligned}
\mathbb{E}_{i \sim p^{k}}\left[g_{i}\left(x^{k}\right)\right] &=\sum_{i=1}^{q} p_{i}^{k} g_{i}\left(x^{k}\right) \\
&=\sum_{i=1}^{q} \frac{\left(g_{i}\left(x^{k}\right)\right)^{2}}{\sum_{i=1}^{q} g_{i}\left(x^{k}\right)} \\
&=\frac{q \mathbb{V} \mathbb{A} \mathbb{R}_{i\sim u}\left[g_{i}\left(x^{k}\right)\right]+\frac{1}{q}\left(\sum g_{i}\left(x^{k}\right)\right)^{2}}{\sum_{i=1}^{q} g_{i}\left(x^{k}\right)} \\
&=\left(q^{2} \mathbb{V} \mathbb{A} \mathbb{R}_{i\sim u}\left[\frac{g_{i}\left(x^{k}\right)}{\sum_{i=1}^{q} g_{i}\left(x^{k}\right)}\right]+1\right) \frac{1}{q} \sum_{i=1}^{q} g_{i}\left(x^{k}\right)\\
&\geq\frac{\left(q^{2} \mathbb{V} \mathbb{A} \mathbb{R}_{i\sim u}\left[p_i^k\right]+1\right)\sigma_{u}^{2}(S)}{\left\|A\right\|^2}\left\|Ax^{k}-b\right\|^{2},\end{aligned}
\end{array}
$$
where the last inequality is deduced by $p^{k}_i=\frac{g_i\left(x^{k}\right)}{\sum_{i=1}^{q} g_{i}\left(x^{k}\right)}$, the inequality \eqref{spectrum2}, and $\left\|A x^{k}-b\right\|\leqslant\|A\| \cdot\left\|x^{k}-\bar{x}\right\|.$ Then, by using \eqref{EB1} and \eqref{inequall}, we have
\begin{eqnarray}\label{proportional1}
\mathbb{E}\left[D_{f}^{x^{k+1}_*}\left(x^{k+1}, \bar{x}\right)  \mid x^k\right]\leq\left(1-\frac{\mu\gamma\left(1+q^{2} \mathbb{V} \mathbb{A} \mathbb{R}_{i\sim u}\left[p^{k}_i\right]\right) \sigma_{u}^{2}( S)}{2\|A\|^2}\right)D_{f}^{x^{k}_*}\left(x^{k}, \bar{x}\right).
\end{eqnarray}

It remains to show the second part. By Lemma \ref{S0}, we know $g_{i_k}(x^{k+1})=0$. Then, $p_{i_{k}}^{k+1}=0$ can be derived. Thus,
$$
\begin{aligned}
\mathbb{V A R}_{i\sim u}\left[p_{i}^{k+1}\right] & = \frac{1}{q} \sum_{i=1}^{q}\left(p_{i}^{k+1}-\frac{1}{q} \sum_{j=1}^{q} p_{j}^{k+1}\right)^{2} \\
&=\frac{1}{q} \sum_{i=1}^{q}\left(p_{i}^{k+1}-\frac{1}{q}\right)^{2} \geq \frac{1}{q}\left(p_{i_{ k}}^{k+1}-\frac{1}{q}\right)^{2}=\frac{1}{q^{2}}.
\end{aligned}
$$
Combining with \eqref{proportional1}, we get
$$
\mathbb{E}\left[D_{f}^{x^{k+1}_*}\left(x^{k+1}, \bar{x}\right) \mid x^{k}\right] \leq\left(1-\frac{\mu\gamma\sigma_{u}^{2}( S)}{\|A\|^2}\right)D_{f}^{x^{k}_*}\left(x^{k}, \bar{x}\right), k\geq1.
$$
This completes the proof.
\end{proof}
\subsubsection{Capped sampling rule}
When $\beta_k=q$, we obtain the capped SBP method, introduced in Algorithm 5.  This sampling rule is originally suggested in [3] for the randomized Kaczmarz method. Below, we provide the convergence guarantees for Algorithm 5.
\begin{table}
	\label{a5}
	\begin{tabular}{l}
		\hline
		Algorithm 5 Capped SBP method \\
		\hline 1: \textbf{Input}: $x^{0}=x^{0}_*=0 \in \mathbb{R}^{n}$, strongly convex function $f$, $A \in \mathbb{R}^{m \times n}, b \in \mathbb{R}^{m},$ $p \in \Delta_{q}^{\dagger},$\\ $\quad\quad\quad\quad$ and $S=\left[S_{1}, \ldots, S_{q}\right],S_i\in\mathbb{R}^{m\times\tau},i=1,\cdots,q$ \\
		2: for $k=0,1,2, \ldots$ do \\
		3: $\quad g_{i}\left(x^{k}\right)=\left\|A x^{k}-b\right\|_{H_{i}}$ for $i=1, \ldots, q,$ where $H_{i}:=S_i(S_{i}^{\top}AA^{\top}S_{i})^{\dagger}S_{i}^{\top}$ \\
		4: $\quad\mathcal{W}_{k}=\left\{i \mid g_{i}\left(x^{k}\right) \geq \theta \max _{j=1, \ldots, q} g_{j}\left(x^{k}\right)+(1-\theta) \mathbb{E}_{j \sim p}\left[g_{j}\left(x^{k}\right)\right]\right\}$ \\
		5: $\quad i_k\sim p^k$, where $p^{k} \in \Delta_{q}^{\dagger}$ such that $\textbf{supp}\left(p^{k}\right) \subset \mathcal{W}_{k}$\\
		6: $\quad y^k\in\arg\min_{y\in\mathbb{R}^{\tau}} f^*(x_*^k-A^{\top}S_{i_k}y)+\langle S_{i_k}^{\top}b, y\rangle$\\
		7: $\quad x^{k+1}_*=x_*^k-A^{\top}S_{i_k}y^k$\\
		8: $\quad  x^{k+1}=\nabla f^*(x_*^{k+1})$\\
		9: output: last iterate $x^{k+1}$\\
		\hline	
	\end{tabular}
\end{table}
\begin{theorem}\label{capt}
Consider Algorithm 5 and let $p \in \Delta_{q}^{\dagger}$ be a fixed reference probability and $\theta \in[0,1] .$ Define
$$
\mathcal{W}_{k}=\left\{i \mid g_{i}\left(x^{k}\right) \geq \theta \max _{j=1, \ldots, q} g_{j}\left(x^{k}\right)+(1-\theta) \mathbb{E}_{j \sim p}\left[g_{j}\left(x^{k}\right)\right]\right\}.
$$
Then, we have
$$
\mathbb{E}\left[D_{f}^{x^{k+1}_*}\left(x^{k+1}, \bar{x}\right)\right] \leq\left(1-\frac{\mu\gamma(\theta \sigma_{\infty}^{2}(S)+(1-\theta) \sigma_{p}^{2}( S))}{2\|A\|^2}\right)^{k+1}D_{f}^{x^{0}_*}\left(x^{0}, \bar{x}\right).
$$
\end{theorem}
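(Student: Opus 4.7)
The plan is to derive Theorem \ref{capt} as the specialization of Theorem \ref{main2} to $\beta_k = q$, while identifying $p_2^k$ with the reference probability $p$ appearing in the definition of $\mathcal{W}_k$. When $\beta_k = q$ the class $\binom{[q]}{\beta_k}$ contains a single element $\tau_k = [q]$, so the outer expectation over $\tau_k \sim p_1^k$ collapses, and the two spectral constants reduce to $\sigma^2_{p_1^k,\infty}(q,S) = \sigma^2_\infty(S)$ and $\sigma^2_{p_1^k, p_2^k}(q,S) = \sigma^2_p(S)$; substituting into Theorem \ref{main2} immediately yields the stated rate. For a self-contained presentation, however, I would redo the three-step template used in the proofs of Theorems \ref{main1} and \ref{main2}.

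First I would invoke Lemma \ref{NT1} pathwise: its inequality \eqref{inequall} holds before taking any expectation, giving the deterministic descent
$$D_f^{x^{k+1}_*}(x^{k+1}, \bar{x}) \leq D_f^{x^k_*}(x^k, \bar{x}) - \frac{\mu}{2} g_{i_k}(x^k).$$
This simultaneously delivers the monotonicity $D_f^{x^k_*}(x^k, \bar{x}) \leq D_f^{x^0_*}(x^0, \bar{x})$ that the error bound in Lemma \ref{2016linear} requires along the whole trajectory.

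Second I would exploit the capping rule. Because $i_k$ is drawn from a distribution supported on $\mathcal{W}_k$, we get the pathwise lower bound
$$g_{i_k}(x^k) \geq \theta \max_{j=1,\ldots,q} g_j(x^k) + (1-\theta)\, \mathbb{E}_{j\sim p}[g_j(x^k)].$$
Applying Lemma \ref{spectrum1} to each term separately and combining with $\|Ax^k - b\|^2 \leq \|A\|^2 \|x^k - \bar{x}\|^2$ yields
$$g_{i_k}(x^k) \geq \frac{\theta \sigma^2_\infty(S) + (1-\theta)\sigma^2_p(S)}{\|A\|^2}\, \|Ax^k - b\|^2.$$
Chaining with the error bound \eqref{EB1}, substituting into the descent inequality, and taking total expectation produces the one-step contraction
$$\mathbb{E}[D_f^{x^{k+1}_*}(x^{k+1}, \bar{x})] \leq \left(1 - \frac{\mu\gamma(\theta \sigma^2_\infty(S) + (1-\theta)\sigma^2_p(S))}{2\|A\|^2}\right) \mathbb{E}[D_f^{x^k_*}(x^k, \bar{x})],$$
and iterating from $k=0$ gives the theorem.

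The only real care-point, which I would not call a serious obstacle, is verifying that the hypotheses of Lemma \ref{2016linear} are preserved at every iterate: namely $x^k_* \in \mathcal{R}(A^\top)$ and $D_f^{x^k_*}(x^k, \bar{x}) \leq D_f^{x^0_*}(x^0, \bar{x})$. The subgradient update $x^{k+1}_* = x^k_* - A^\top S_{i_k} y^k$ together with $x^0_* \in \mathcal{R}(A^\top)$ takes care of the range condition, while the monotone decrease of the Bregman distance is built into the pathwise form of Lemma \ref{NT1}. With these invariants in hand, the rest is the routine chaining described above.
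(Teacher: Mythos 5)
Your proposal is correct and follows essentially the same route as the paper, which omits the proof of this theorem with the remark that it is "similar to that of Theorem \ref{main2}": you specialize Theorem \ref{main2} to $\beta_k=q$ with $p_2^k=p$ (exactly the parameter choice listed in Table \ref{special cases}) and, in the self-contained version, reproduce the paper's standard three-step chain of Lemma \ref{NT1}, Lemma \ref{spectrum1}, and the error bound \eqref{EB1}. Your added verification that the iterates preserve the hypotheses of Lemma \ref{2016linear} matches the discussion the paper gives after Lemma \ref{NT1}, so nothing is missing.
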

The proof of Theorem \ref{capt} is similar to that of Theorem \ref{main2}; we omit it here. Note that, for the capped SBP method, in each iteration $p_{i}^{k}$ are zero for all indices that are not in the set $\mathcal{W}_{k}$, which contains indices whose sketched losses are lager than the weighted sum of the maximal sketched loss and $\mathbb{E}_{i \sim p}\left[g_{i}\left(x^{k}\right)\right].$  The parameter $\theta$ can control the aggressive of the method. When $\theta=1$, capped SBP method is equivalent to the max-distance SBP method. But when $\theta=0$, the convergence rate of the sampling rule is equivalent to that of the non-adaptive SBP.

By using the relationships between different spectral constants in Lemma \ref{relationshipspectral}, we conclude that the convergence rate of the SBP method with the capped sampling rule is not slower than that of the non-adaptive SBP method.

\subsubsection{Sketch Motzkin SBP method}
When $\theta=1$, we can derive a sampling rule called the Sketched Motzkin method which can be viewed as  a generalization of the sampling Kaczmarz-Motzkin method in \cite{haddock2019greed}. The convergence rate of this method is shown in Theorem \ref{KMot}, which can be directly derived from Theorem \ref{main2}. At the same time, we can also derive a convergence rate below and show that it can be as least faster than the SBP method with the uniform sampling rule.
\begin{theorem}\label{KMot}
	 Let$\quad\tau_k\in\binom{[q]}{\beta_k}\sim p_1^k$ $,p_1^k \in \Delta_{|\binom{[q]}{\beta_k}|}$, and $\beta_k\leq q$. The iterates in Algorithm 6 converge linearly in the sense that
	$$\mathbb{E}\left[D_{f}^{x^{k+1}_* }\left(x^{k+1}, \bar{x}\right)\right] \leqslant\left(1-\frac{\mu \gamma\cdot\sigma^2_{p_1^k,\infty}(\beta_k,S)}{2\|A\|^2}  \right)^{k+1}D_f^{x^0_*}(x^0,\overline{x}).$$
	Moreover, define $u_q=\left(\frac{1}{q}, \ldots, \frac{1}{q}\right) \in \Delta_{q}^{\dagger}$,
		$$\mathbb{E}\left[D_{f}^{x^{k+1}_* }\left(x^{k+1}, \bar{x}\right)\right] \leqslant\left(1-\frac{\mu \gamma\cdot\sigma^2_{u_{|\binom{[q]}{\beta_k}|},\infty}(\beta_k,S)}{2\|A\|^2}  \right)^{k+1}D_f^{x^0_*}(x^0,\overline{x})\leqslant\left(1-\frac{\mu \gamma\cdot\sigma^2_{u_q}(S)}{2\|A\|^2}  \right)^{k+1}D_f^{x^0_*}(x^0,\overline{x}).$$
\end{theorem}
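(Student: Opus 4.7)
I would prove both parts as consequences of Theorem~\ref{main2}; the main rate is a plain specialization, while the ``moreover'' chain additionally needs a combinatorial comparison between the spectral constants in \eqref{spec1} and \eqref{spectrum4}.

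For the main rate, observe that Algorithm~6 (sketched Motzkin) is Algorithm~2 with $\theta = 1$: the threshold in the definition of $\mathcal{W}_k$ collapses to $g_i(x^k) \geq \max_{j \in \tau_k} g_j(x^k)$, so only the arg-max indices inside $\tau_k$ are retained. Substituting $\theta = 1$ into Theorem~\ref{main2} makes the convex combination $\theta\,\sigma^2_{p_1^k,\infty}(\beta_k,S) + (1-\theta)\,\sigma^2_{p_1^k,p_2^k}(\beta_k,S)$ collapse to $\sigma^2_{p_1^k,\infty}(\beta_k,S)$, which gives the first displayed inequality directly.

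For the ``moreover'' chain, I would specialize the main bound to $p_1^k = u_{|\binom{[q]}{\beta_k}|}$, the uniform distribution on $\beta_k$-element subsets of $[q]$; this yields the middle term at once. It then remains to prove the spectral comparison
$$\sigma^2_{u_{|\binom{[q]}{\beta_k}|},\infty}(\beta_k,S) \;\geq\; \sigma^2_{u_q}(S),$$
since monotonicity of $t \mapsto (1 - ct)^{k+1}$ in $t$ closes the chain. For an arbitrary $v \notin \mathrm{Null}(A)$, the elementary inequality $\max_{i\in\tau_k}\|v\|_{Z_i}^2 \geq (1/\beta_k)\sum_{i\in\tau_k}\|v\|_{Z_i}^2$ holds on every $\tau_k \in \binom{[q]}{\beta_k}$; averaging uniformly over $\tau_k$ and applying the double-counting identity $\binom{q-1}{\beta_k-1}/\binom{q}{\beta_k} = \beta_k/q$ reduces the right-hand side to $(1/q)\sum_{i=1}^q \|v\|_{Z_i}^2 = \|v\|^2_{\mathbb{E}_{i\sim u_q}[Z_i]}$, after which dividing by $\|v\|^2$ and taking the minimum over admissible $v$ delivers the comparison. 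I expect this combinatorial averaging to be the only non-routine step; it simply records the fact that the uniform law on $\beta_k$-subsets marginalizes to the uniform law on indices, and everything else is an appeal to Theorem~\ref{main2} together with monotonicity of the rate in the relevant spectral constant.
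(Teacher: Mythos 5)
Your proposal is correct and follows essentially the same route as the paper: the first display is Theorem~\ref{main2} specialized to $\theta=1$, and the ``moreover'' chain rests on bounding the max by the uniform average over $\tau_k$ and observing that the uniform law on $\beta_k$-subsets marginalizes to the uniform law on $[q]$, which is exactly the paper's argument (you merely make the double-counting identity explicit where the paper asserts it).
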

\begin{proof}
We only prove the second part. Recall \eqref{spectrum4}, the relationship below can be held.
	\begin{eqnarray*}\label{spectrum5}
			\sigma^2_{u_{|\binom{[q]}{\beta_k}|},\infty}(\beta_k,S):= \min _{v \notin \mathrm{Null(A)}}\mathbb{E}_{\tau_k\sim u_{|\binom{[q]}{\beta_k}|}}\left[ \max _{i\in\tau_k} \frac{\left\| v\right\|_{Z_i}^{2}}{\|v\|^{2}}\right]&\geq&\min _{v \notin \mathrm{Null(A)}}\mathbb{E}_{\tau_k\sim u_{|\binom{[q]}{\beta_k}|}}\left[ \mathbb{E} _{i\sim u_{\beta_k},i\in \tau_k} \frac{\left\| v\right\|_{Z_i}^{2}}{\|v\|^{2}}\right]\\
			&=&\min _{v \notin \mathrm{Null(A)}}\mathbb{E} _{i\sim u_{q}} \frac{\left\| v\right\|_{Z_i}^{2}}{\|v\|^{2}}=	\sigma^2_{u_{q}}(S).
			\end{eqnarray*}
Then, combining with the results in the first part of Theorem \ref{KMot}, we finish the proof.
\end{proof}

Overall, by Lemma \ref{relationshipspectral} and the rates of the convergence in Table \ref{special cases}, we can find that the max-distance SBP method has the fastest convergence rate. Next is the sketched Motzkin SBP method , capped SBP method and proportional to the sketched loss SBP method. Last is the non-adaptive SBP method utilizing the uniform sampling rule. In the next section, we will apply numerical tests to verify the theoretical results above. Furthermore, it should be noted that the SBP method with adaptive sampling rule needs more computational cost in each iteration although the method utilizing them usually achieve a faster converge rate.
\begin{table}
	\label{a2}
	\begin{tabular}{l}
		\hline
		Algorithm 6 Sketch Motzkin SBP method\\
		\hline ~1: \textbf{Input}: $x^{0}=x^{0}_*=0 \in \mathbb{R}^{n}$, strongly convex function $f$, $A \in \mathbb{R}^{m \times n}, b \in \mathbb{R}^{m},$ $S=\left[S_{1}, \ldots, S_{q}\right],$\\
		$\quad \quad \quad\quad S_i\in\mathbb{R}^{m\times\tau},i=1,\cdots,q$ \\
		~2: for $k=0,1,2, \ldots$ do \\
		~3: $\quad\tau_k\in\binom{[q]}{\beta_k}\sim p_1^k$ $,p_1^k \in \Delta_{|\binom{[q]}{\beta_k}|}$, and $\beta_k\leq q$ \\
		~4: $\quad i_{k}=\arg \max _{i\in\tau_k} g_{i}\left(x^{k}\right)$ \\
		~5: $\quad y^k\in\arg\min_{y\in\mathbb{R}^{\tau}} f^*(x_*^k-A^{\top}S_{i_k}y)+\langle S_{i_k}^{\top}b, y\rangle$\\
		~6: $\quad x^{k+1}_*=x_*^k-A^{\top}S_{i_k}y^k$\\
		~7: $\quad  x^{k+1}=\nabla f^*(x_*^{k+1})$\\
		~8: \textbf{Output}: last iterate $x^{k+1}$\\
		\hline	
	\end{tabular}
\end{table}
%
%\begin{lemma}
%	Let $f$ be a Legendre-type function on $\mathbb{R}^n$ with dom$f^*$ being $C_1,C_2,\cdots,C_q$ are closed sets with,
%	\begin{eqnarray*}
%	\cap_{i} C_{i} \cap int(dom f) \neq \phi,
%	\end{eqnarray*}
%and let,
%
%$r$ be a random mapping from $\mathbb{N}$ onto $\{1, \cdots, q\}$.
%
%Then the sequence $\{x^k\}$ generated by
%\begin{equation}
%\left\{\begin{array}{l}
%x^{\circ} \in int (dom~f) \\
%x^{*+1}=\operatorname{\arg\min}\left(D_{f}\left(x^{k}, x\right)\right),
%\text { s.t. } x \in C_{p^k}
%\end{array}\right.
%\end{equation}
%\text { converges to some point } $\bar{x} \text { in } \prod_{i=1}^{q} C_{i} \cap \text { int } \operatorname{dom} (f ).$
%\end{lemma}
%
%
%\begin{theorem}
%	Suppose that,
%	\begin{enumerate}
%		\item the exactness assumption holds i.e.,
%		$$
%		\operatorname{Null} E[Z]=\operatorname{Null}(A)
%		$$
%		\item $f$ is Legendre type and $dom~f^*$ is open
%		\item
%		$L\cap int(dom~f)\neq\emptyset$ with $L:=\{x:Ax =b\}$.
%	\end{enumerate}
%Then, the sequence $\{x^k\}$ generated by SBP2 converges to some point $\overline{x}\in L\cap int(dom~f)$.
%	\end{theorem}
%\begin{proof}
%	Recall that $L:=\{x: S_i^{\top}Ax=S_i^{\top}b\},i=1,\cdots,q,$ which are closed convex sets. By the exactness assumption and Theorem we can have $L = \cap_{i=1}^{q}L_{S_i}$. Therefore, the results followed by directly applying lemma above.
%\end{proof}
\section{Applications}
In this section, we introduce some applications of the SBP method to the Kaczmarz method and the sparse Kaczmarz method.
\subsection{Randomized Kaczmarz method}
Set $f(x)=\frac{1}{2}\|x\|_2^2$ and take the sketching matrices $S_{i}=e_{i}$ for $i=1, \ldots, m$, where $e_{i} \in \mathbb{R}^{n}$ is the $i{\text {th }}$ coordinate vector. In this setting, $f^*(x)=\frac{1}{2}\|x\|^2$ and hence the resulting method corresponds to the randomized Kaczmarz method, read as
\begin{eqnarray*}
	x^{k+1}=x^{k}-\frac{\left\langle A_{i_k:}^{\top}, x^{k}\right\rangle-b_{i_k}}{\left\|A_{i_k:}\right\|_{2}^{2}} \cdot A_{i_k:}^{\top}, i_k\sim p^k.
\end{eqnarray*}
 Note that
 \begin{eqnarray}
&H_i:=e_i(e_i^{\top}AA^{\top}e_i)^{\dag}e_i^{\top}=\frac{e_ie_i^{\top}}{\|A_{i:}\|^2},&\\
&Z_i:=A^{\top}H_iA=\frac{A_{i:}^{\top}A_{i:}}{\|A_{i:}\|^2}.&
\end{eqnarray}
Hence, the sketched loss becomes
\begin{eqnarray*}
	g_{i_k}(x^k):=\|x^k-\overline{x}\|^2_{Z_{i_k}}=\frac{\|\langle A_{i_k:}^{\top},x^k\rangle-b_{i_k}\|^2}{\|A_{i_k:}\|^2}.
\end{eqnarray*}

Using the results in Section 4, we obtain a group of convergence rate bounds for Kaczmarz methods, equipped with different sampling strategies; the details are summarized in Table \ref{ttt1}.
\begin{table}
\caption{Summary of convergence rate bounds and parameters choosing for different sampling strategies for the randomized Kaczmarz algorithm. Here, $\eta=1 / \max _{i=1, \ldots, m} \sum_{j=1, j \neq i}^{m} p_{i}$ as defined in $\eqref{eta}$, $P=\operatorname{diag}\left(p_{1}, \ldots, p_{m}\right)$ is
	a matrix of arbitrary fixed probabilities, and $\overline{A}$ is the normalization of $A$. The sampling size is $\beta_k$, and the number of rows and columns in the matrix $A$ are $m$ and $n$ respectively, $\gamma_k:=\frac{\sum_{\tau_k\in\binom{[m]}{\beta_k}}\|A_{\tau_k}\mathbf{x}_{k}-\mathbf{b}_{\tau_k}\|^2_2}{\sum_{\tau_k\in\binom{[m]}{\beta_k}}\|A_{\tau_k}\mathbf{x}_{k}-\mathbf{b}_{\tau_k}\|^2_{\infty}}\leq\beta_k,$  $\theta\in[0,1]$, and $\sigma^{\dagger}(\cdot)$ is the smallest nonzero singular value of a matrix.}\label{ttt1}
\centering
\resizebox{0.9\textwidth}{!}{
\begin{tabular}{|c|c|c|c|}
	\hline Sampling Strategy & Convergence Rate Bound & Rate Bound In & Parameter choosing \\
	\hline \multirow{2}{*}{Uniform} & \multirow{2}{*}{$1-\frac{1}{m} \sigma_{\min }^{\dagger}\left(\overline{A}^{\top} \overline{A}\right)$} & \multirow{2}{*}{Theorem $1$\cite{Maxdis}}& $f(x)=\frac{1}{2}\|x\|_2^2, S_i=e_i$,\\
	&&&$p_{1_i}=\frac{1}{m},$ $ \theta=0$, $\beta_k=1.$ \\
	
	\hline\multirow{2}{*}{$p_{1_i} \propto\left\|A_{i:}\right\|_{2}^{2}$} &\multirow{2}{*}{$1-\frac{\sigma_{\min }^{\dagger}\left(A^{\top} A\right)}{\|A\|_{F}^{2}}$} & \multirow{2}{*}{ Theorem 7\cite{2009AVershyin}}& $f(x)=\frac{1}{2}\|x\|_2^2, S_i=e_i$\\
	&&& $p_{1_i}=\frac{\|A_{i:}\|_2^2}{\|A\|_{F}^2},\theta=0$, $\beta_k=1.$ \\
	\hline \multirow{2}{*}{Max-distance} & \multirow{2}{*}{$1-\frac{\sigma_{\min }^{\dagger}\left(\overline{A}^{\top} \overline{A}\right)}{\gamma_{k}}$} & \multirow{2}{*}{Theorem 1\cite{haddock2019greed}} & $f(x)=\frac{1}{2}\|x\|_2^2, S_i=e_i$\\
	&&&$\theta=1$, $\beta_k=m.$ \\
	\hline\multirow{2}{*}{\text{Proportional}} & \multirow{2}{*}{$1-\frac{2}{m} \sigma_{\min }^{\dagger}\left(\overline{A}^{\top} \overline{A}\right)$} &\multirow{2}{*}{Theorem 11\cite{Gower2019}} & $f(x)=\frac{1}{2}\|x\|_2^2, S_i=e_i$\\
	&&&$p_1 \propto g_{i}$,$\beta_k=1.$ \\
	\hline \multirow{2}{*}{Capped} & \multirow{2}{*}{$1-(\theta \eta+1) \sigma_{\min }^{\dagger}\left(\overline{A}^{\top} P \overline{A}\right)$} & \multirow{2}{*}{Theorem 13\cite{bai2018on}} & $f(x)=\frac{1}{2}\|x\|_2^2, S_i=e_i$\\
	&&&$\beta_k=m$\\
	\hline Sampling & \multirow{2}{*}{$1-\frac{\beta_{k}\sigma_{\min }^{\dagger}\left(\overline{A}^{\top} \overline{A}\right)}{\gamma_{k}m}$} & \multirow{2}{*}{Theorem 1\cite{haddock2019greed}} & $f(x)=\frac{1}{2}\|x\|_2^2,S_i=e_i$\\
	Kaczmarz-Motzkin&&&$p_1\sim\frac{\left\|A_{\tau_k:}\right\|^{2}}{\sum_{\tau \in\binom{[m]}{\beta_k}}\|A_{\tau,:}\|^2},\theta=1$\\
	\hline
\end{tabular}
}
\end{table}
\subsection{Randomized sparse Kaczmarz method}
Set $f(x)=\frac{1}{2}\|x\|_2^2+\lambda\|x\|_1$ and take other parameters of the randomized Kaczmarz method. Then, we recover the randomized sparse Kaczmarz method. In this setting, $f^*(x)=\frac{1}{2}\|S_{\lambda}(x)\|^2,$ where $S_\lambda(x)$ is the soft-thresholding operator, defined by
$$S_\lambda(x)=\max(|x|-\lambda,0)\cdot\text{sign}(x).$$
Now, the randomized sparse Kaczmarz method reads as
\begin{eqnarray*}
x^{k+1}_{*}&=&x^{k}_{*}-t_k \cdot A_{i_k:},i_k\sim p^k \\
x^{k+1}&=&S_{\lambda}\left(x^{k+1}_{*}\right),
\end{eqnarray*}
where $t_k$ is the dual stepsize, called inexact step if $t_{k}=\left\langle A_{i_{k},:}^{\top}, x_{k}\right\rangle-b_{i_{k}},$ and if
\begin{eqnarray}\label{subo}
t_{k}\in
\operatorname{argmin}_{t \in \mathbb{R}} f^{*}\left(x_{k}^{*}-t \cdot A_{i_{k},:}\right)+t \cdot b_{i_{k}},
\end{eqnarray}
which is called the exact step. The method to solve this minimization subproblem \eqref{subo} above was analyzed in \cite{schopfer2016linear}.
Again, by using the results in Section 4, we obtain a group of  convergence rate bounds for the randomized sparse Kaczmarz methods, summarized in Table \ref{ttt2}.
\begin{table}
	\caption{Summary of convergence rate bounds and parameter choosing for different sampling strategies for the randomized Kaczmarz algorithm. Here, $\eta=1 / \max _{i=1, \ldots, m} \sum_{j=1, j \neq i}^{m} p_{i}$ as defined in Equation $(\ref{eta}), P=\operatorname{diag}\left(p_{1}, \ldots, p_{m}\right)$ is
		a matrix of arbitrary fixed probabilities, $\overline{A}$ is the normalization of $A$. $\lambda$ is the $l_1$ regularizer. The sampling size is $\beta_k$, and the number of rows and columns in the matrix $A$ are $m$ and $n$ respectively, $\gamma_k:=\frac{\sum_{\tau_k\in\binom{[m]}{\beta_k}}\|A_{\tau_k}\mathbf{x}_{k}-\mathbf{b}_{\tau_k}\|^2_2}{\sum_{\tau_k\in\binom{[m]}{\beta_k}}\|A_{\tau_k}\mathbf{x}_{k}-\mathbf{b}_{\tau_k}\|^2_{\infty}}\leq\beta_k,$ and $|\hat{x}|_{\min }$ is the smallest nonzero element of $|\hat{x}|$.}\label{ttt2}
		\resizebox{\textwidth}{!}{
	\begin{tabular}{|c|c|c|c|}
		\hline Sampling Strategy & Convergence Rate Bound & Rate Bound In & Parameter choosing \\
		\hline \multirow{2}{*}{Uniform} & \multirow{2}{*}{$1-\frac{1}{2m} \sigma_{\min }^{\dagger}\left(\overline{A}^{\top} \overline{A}\right)\frac{|\hat{x}|_{\min }}{|\hat{x}|_{\min }+2 \lambda}$} & \multirow{2}{*}{Theorem 1\cite{2016Linear}} & $f(x)=\frac{1}{2}\|x\|_2^2+\lambda\|x\|_1, S_i=e_i$,\\
		&&&$p_{1_i}=\frac{1}{m},$ $ \theta=0$, $\beta_k=1.$ \\
		
		\hline\multirow{2}{*}{$p_{1_i} \propto\left\|A_{i:}\right\|_{2}^{2}$} &\multirow{2}{*}{$1-\frac{\sigma_{\min }^{\dagger}\left(A^{\top} A\right)}{2\|A\|_{F}^{2}}\frac{|\hat{x}|_{\min }}{|\hat{x}|_{\min }+2 \lambda}$} & \multirow{2}{*}{Theorem 1\cite{2016Linear} }& $f(x)=\frac{1}{2}\|x\|_2^2+\lambda\|x\|_1, S_i=e_i$\\
		&&& $p_{1_i}=\frac{\|A_{i:}\|_2^2}{\|A\|_{F}^2},\theta=0$, $\beta_k=1.$ \\
		\hline \multirow{2}{*}{Max-distance} &\multirow{2}{*}{ $1-\frac{\sigma_{\min }^{\dagger}\left(\overline{A}^{\top} \overline{A}\right)}{2\gamma_{k}}\frac{|\hat{x}|_{\min }}{|\hat{x}|_{\min }+2 \lambda}$} & \multirow{2}{*}{Section 4.4.1} & $f(x)=\frac{1}{2}\|x\|_2^2+\lambda\|x\|_1, S_i=e_i$\\
		&&&$\theta=1$, $\beta_k=m.$ \\
			\hline\multirow{2}{*}{\text{Proportional}} & \multirow{2}{*}{$1-\frac{1}{m} \sigma_{\min }^{\dagger}\left(\overline{A}^{\top} \overline{A}\right)\frac{|\hat{x}|_{\min }}{|\hat{x}|_{\min }+2 \lambda}$} &\multirow{2}{*}{Section 4.4.2} & $f(x)=\frac{1}{2}\|x\|_2^2+\lambda\|x\|_1, S_i=e_i$\\
		&&&$p_1 \propto g_{i}$,$\beta_k=1.$ \\
			\hline \multirow{2}{*}{Capped} & \multirow{2}{*}{$1-\frac{(\theta \eta+1) \sigma_{\min }^{\dagger}\left(\overline{A}^{\top} P \overline{A}\right)}{2}\frac{|\hat{x}|_{\min }}{|\hat{x}|_{\min }+2 \lambda}$} & \multirow{2}{*}{Section 4.4.3} & $f(x)=\frac{1}{2}\|x\|_2^2, S_i=e_i$\\
	&&&$\beta_k=m$\\
		\hline Sampling & \multirow{2}{*}{$1-\frac{\beta_{k}\sigma_{\min }^{\dagger}\left(\overline{A}^{\top} \overline{A}\right)}{2\gamma_{k}m}\frac{|\hat{x}|_{\min }}{|\hat{x}|_{\min }+2 \lambda}$} & \multirow{2}{*}{Theorem 2\cite{Yuan2021}} & $f(x)=\frac{1}{2}\|x\|_2^2+\lambda\|x\|_1,S_i=e_i$\\
	Kaczmarz-Motzkin&&&$p_1\sim\frac{\left\|A_{\tau_k:}\right\|^{2}}{\sum_{\tau \in\binom{[m]}{\beta_k}}\|A_{\tau:}\|^2},\theta=1$\\
	\hline
	\end{tabular}}
\end{table}
\section{Numerical performance}
\subsection{Experimental setup}
In this section, we will testify the performance of the sparse Kaczmarz methods with different sampling strategies: uniform rule\cite{2009AVershyin}, max-distance\cite{griebel2012greedy}, proportional to the sketched loss adaptive rule\cite{Gower2015}, capped adaptive sampling rule with $\theta=0.5$\cite{bai2018on,bai20181on}, and the sampling Kaczmarz-Motzkin rule with $\beta=m/2$\cite{haddock2019greed,Yuan2021}. While for the Kaczmarz method, \cite{Gower2019} has made a large number of numerical tests to demonstrate that the max-distance sampling rule can have a faster convergence rate than other sampling rules. From Figure \ref{figure01}, we can find that the randomized Kaczmarz method equipped with sampling Kaczmarz-Motzkin rule has a little slower convergence speed than the max-distance rule, but it actually demands much less computational cost when $m$ is large. This observation is very important in practice when dealing with big data. In comparison, for the sparse Kaczmarz method, the results of the tests in this paper demonstrate that the method applying the sampling Kaczmarz-Motzkin rule demands the least computational cost to achieve a given error than other sampling rules.
\begin{figure}
	\begin{subfigure}{0.5\textwidth}
		\includegraphics[width=1\textwidth]{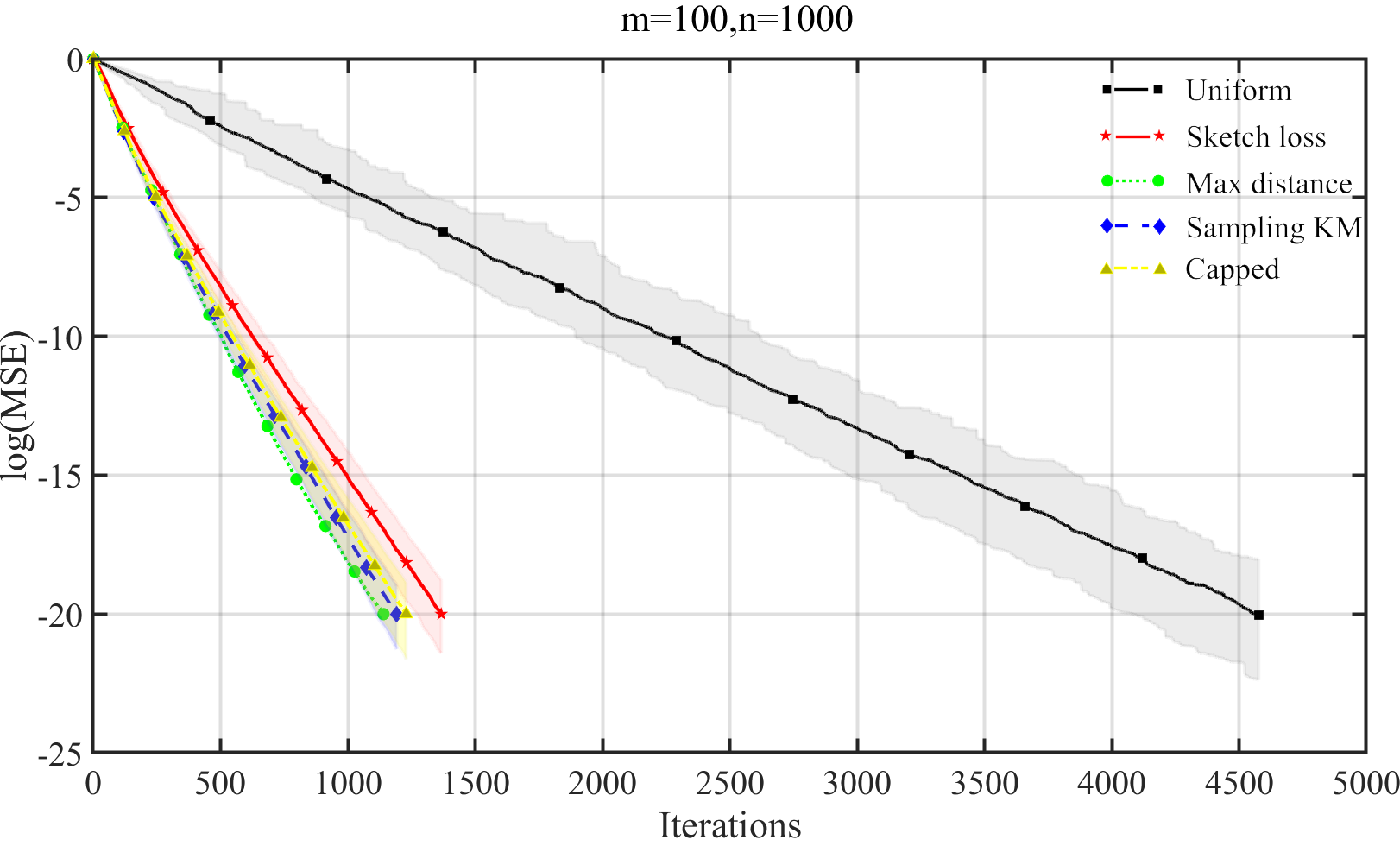}
		\caption{}
	\end{subfigure}
	\begin{subfigure}{0.5\textwidth}
		\includegraphics[width=1\textwidth]{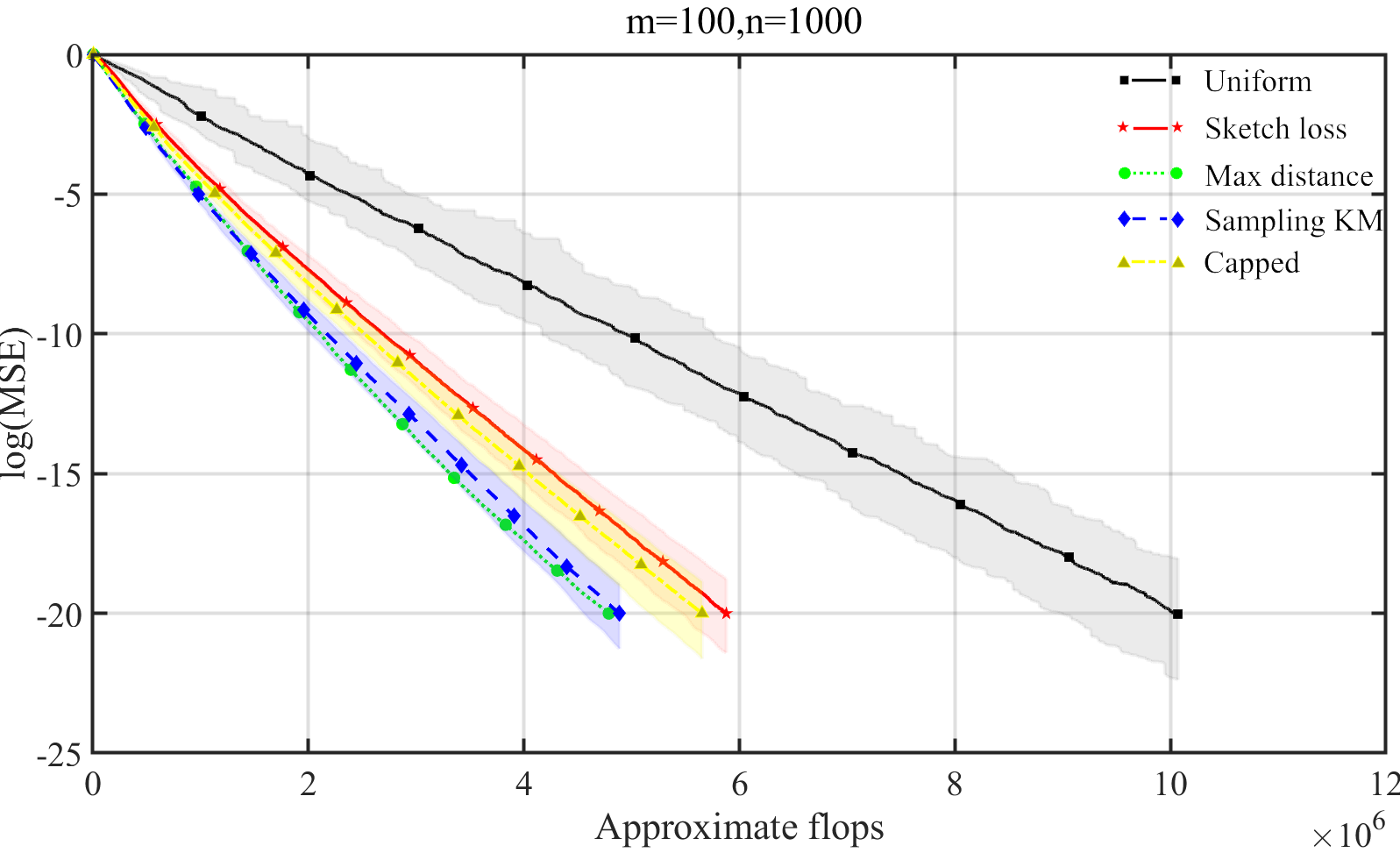}
		\caption{}
	\end{subfigure}
	\begin{subfigure}{0.5\textwidth}
		\includegraphics[width=1\textwidth]{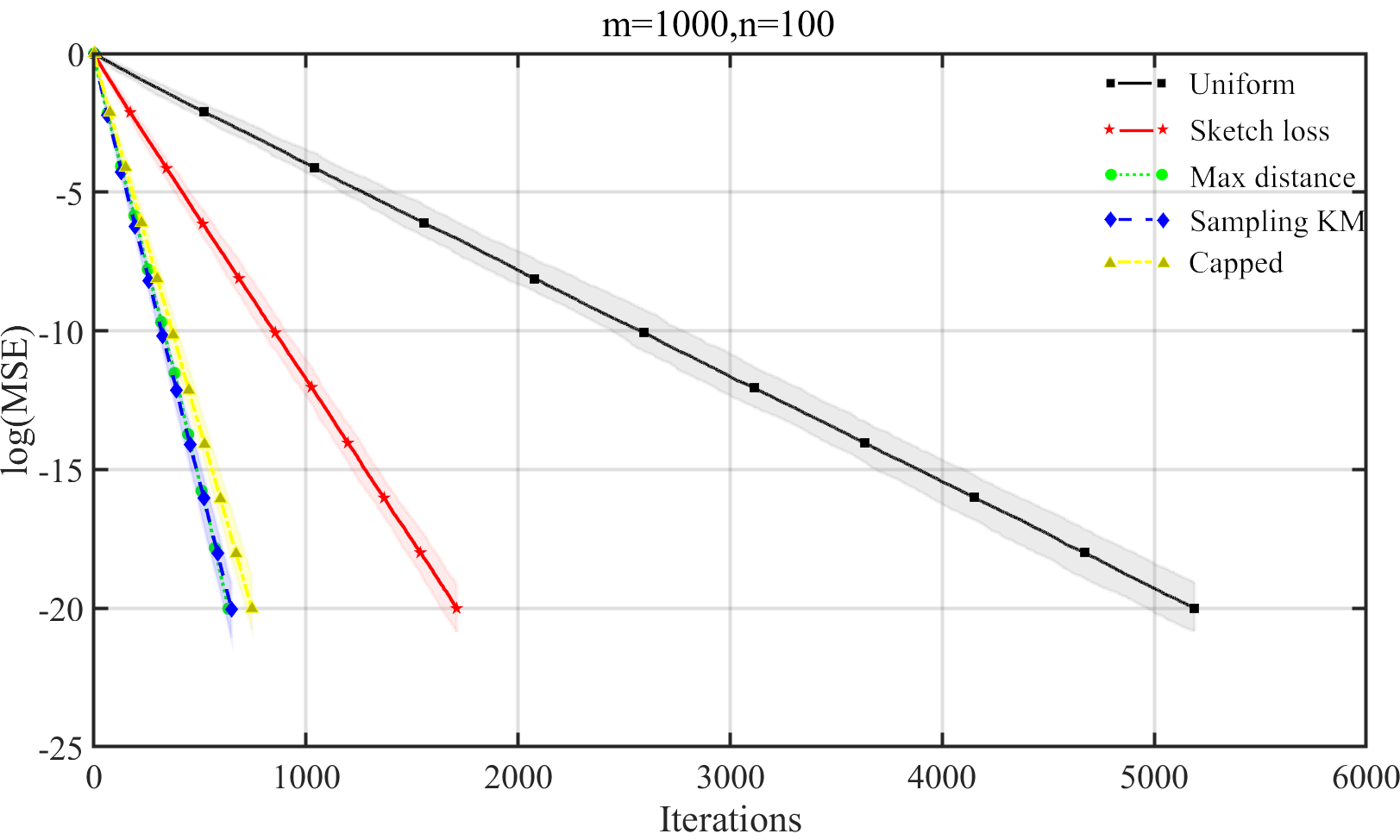}
		\caption{}
	\end{subfigure}
\begin{subfigure}{0.5\textwidth}
	\includegraphics[width=1\textwidth]{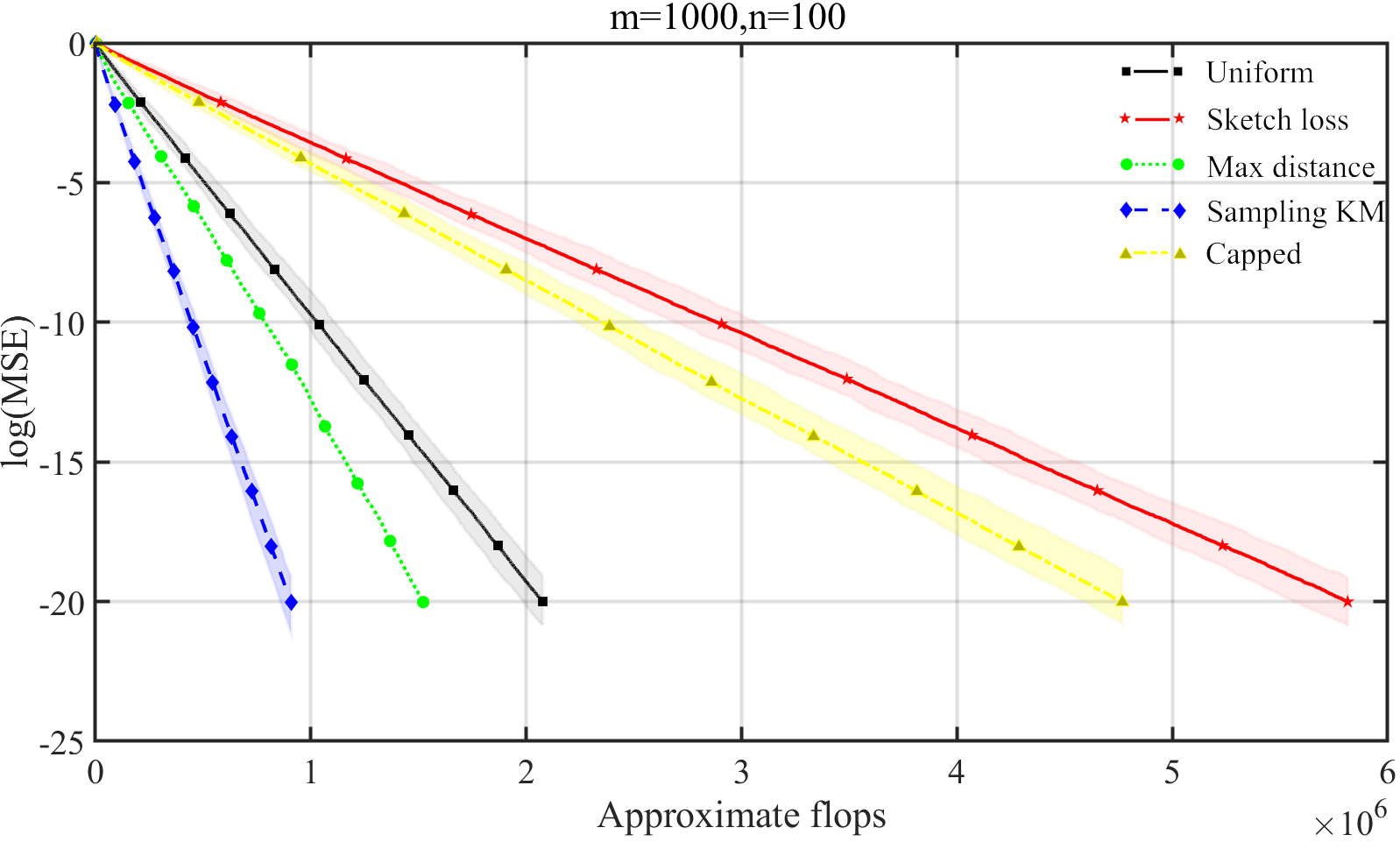}
	\caption{}
\end{subfigure}
\caption{A comparison between different sampling strategies for randomized Kaczmarz methods. MSE are averaged over 100 trials and are plotted against the iteration and corresponding approximate
	flops aggregated over the computations respectively. Subplots on the first row show convergence for underdetermined systems, and those on the second column show the convergence on an overdetermined systems. Thick line
	shows median over 100 trials, light area is between min and max, and darker area indicate 25th and 75th quantile.}
\label{figure01}
\end{figure}

In the tests, we are going to solve the linear systems with two kinds of different coefficient matrices $\mathbf{ A}\in\mathbb{R}^{m\times n}$. One is the random matrix, generated by using the Matlab function
'randn'. In our implementations, the solution $\overline{x}\in\mathbb{R}^n$
is randomly generated by using the Matlab function "randn" as well. The nonzero locations are chosen randomly according to the sparsity. The measurement $b\in\mathbb{R}^m$ is calculated by $A\hat{x}$. The other type of matrices is originated from Computed Tomography (CT).
The signal and matrix are generated by AIRtools toolbox in \cite{2017AIR}. All computations are started from the
initial vector $x^0 = 0$. The mean square error (MSE) is defined as
\begin{eqnarray}
\textrm{MSE} = \frac{\|x^k-\overline{x}\|^2}{\|\overline{x}\|^2},
\end{eqnarray}
All of the experiments are carried out by Matlab(Version R2019a) on a personal computer with 2.70GHZ CPU(Intel(R) Core(TM) i7-6820HQ), 32GB memory, and Microsoft Windows 10 operation system.

\subsection{MSE per iteration}

We first investigate the behaviours of MSE in terms of the iterations. We divide the testing matrices into two categories $m/n=1.5$(or $n/m=1.5$) and $m/n\approx3.3$(or $n/m\approx3.3$). For each category,  both  under-determined and over-determined situations are considered. The sparsity of the signal is $30$. The results are shown in Figure \ref{figure1} and Figure \ref{figure2}.
\begin{figure}
	\begin{subfigure}{0.5\textwidth}
		\includegraphics[width=1\textwidth]{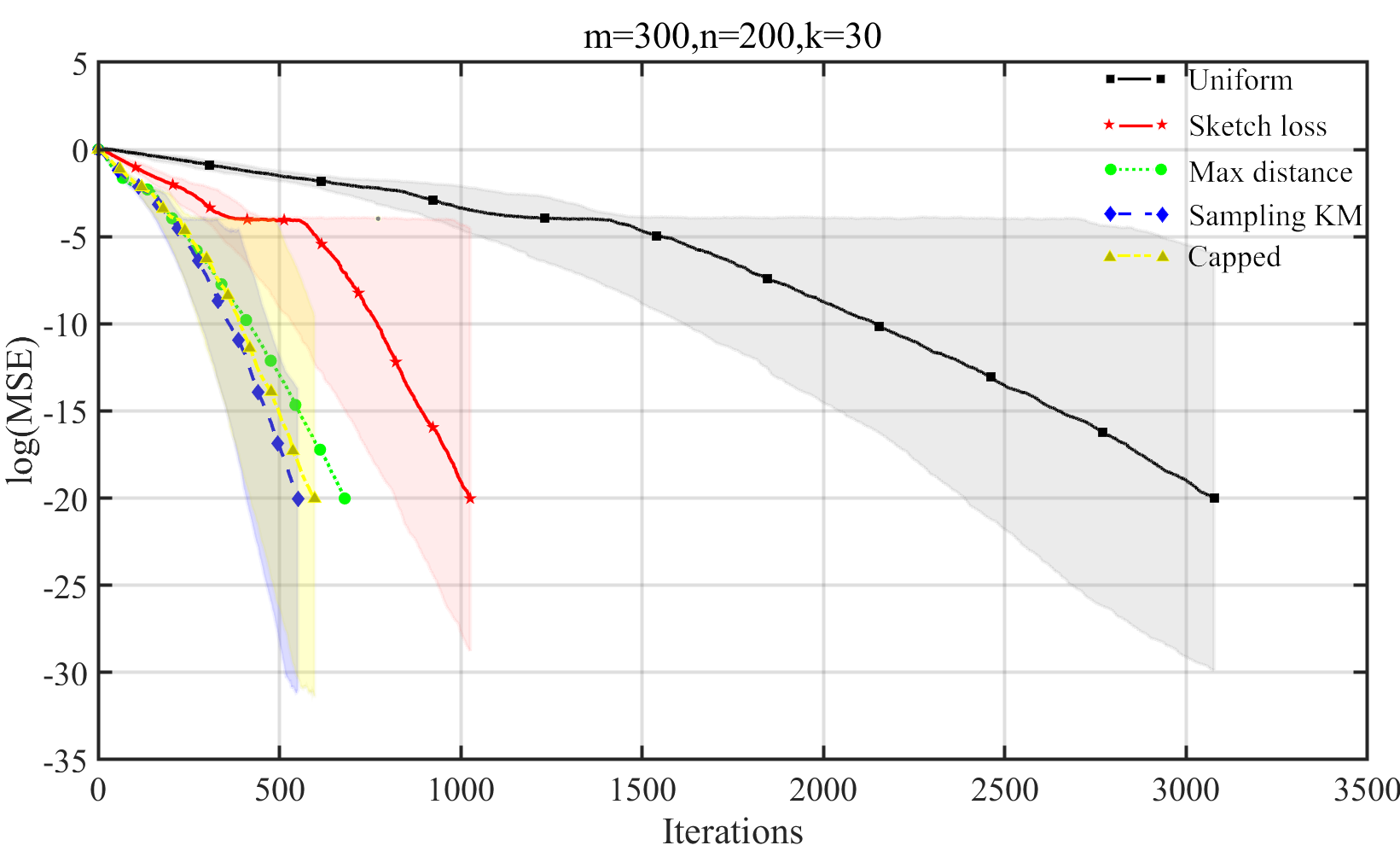}
		\caption{}
	\end{subfigure}
\begin{subfigure}{0.5\textwidth}
	\includegraphics[width=1\textwidth]{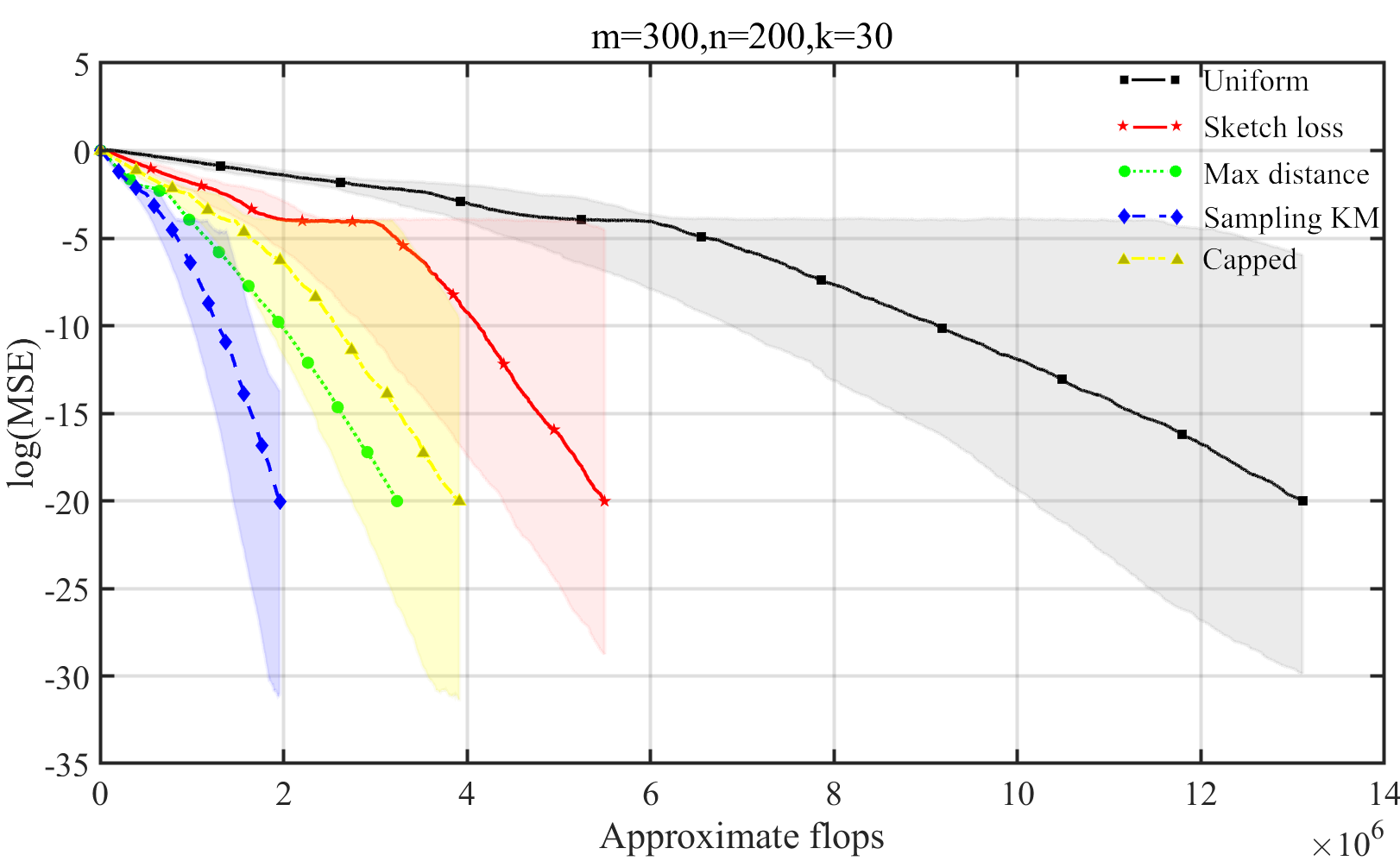}
	\caption{}
\end{subfigure}
	\begin{subfigure}{0.5\textwidth}
		\includegraphics[width=1\textwidth]{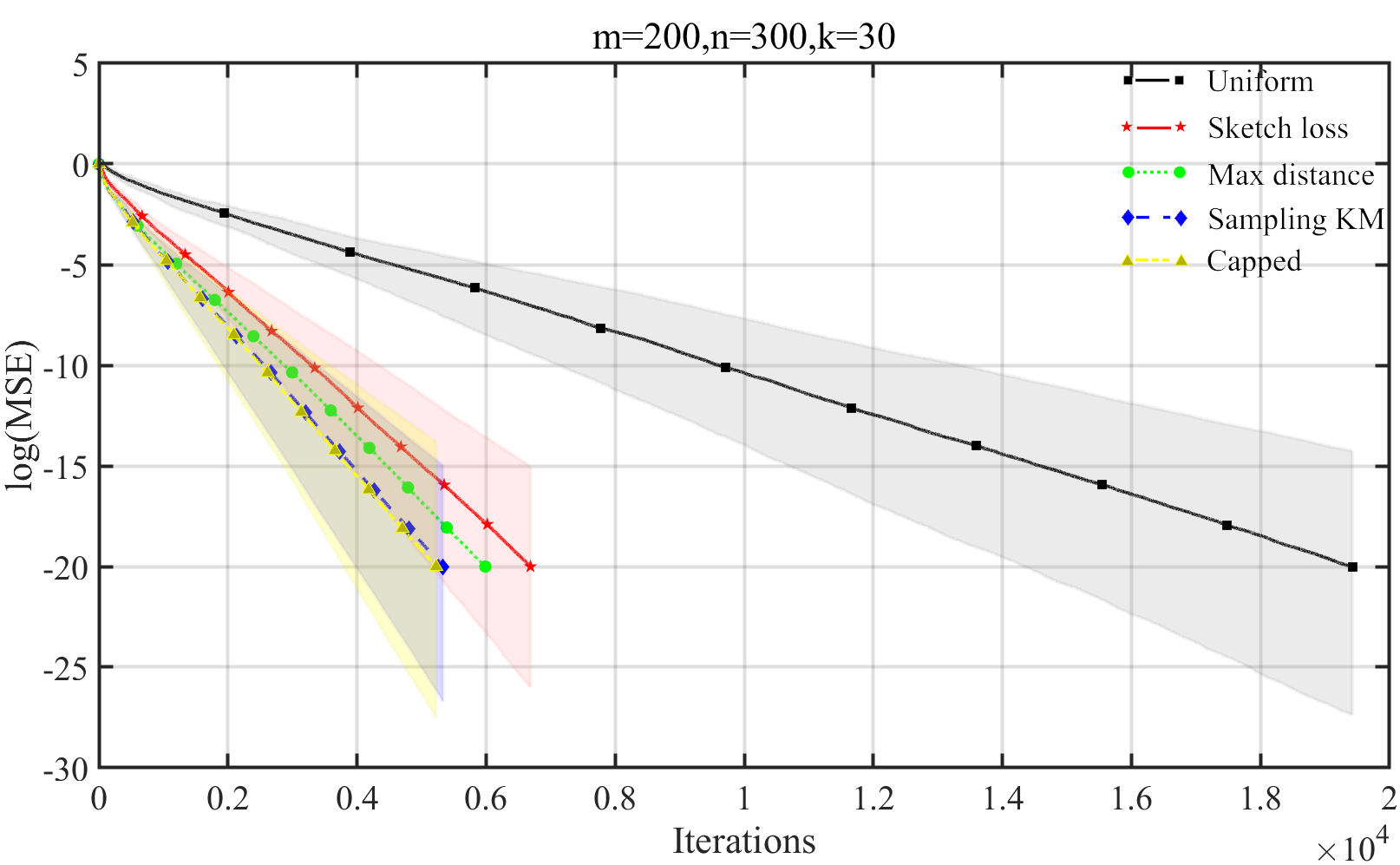}
		\caption{}
	\end{subfigure}
	\begin{subfigure}{0.5\textwidth}
		\includegraphics[width=1\textwidth]{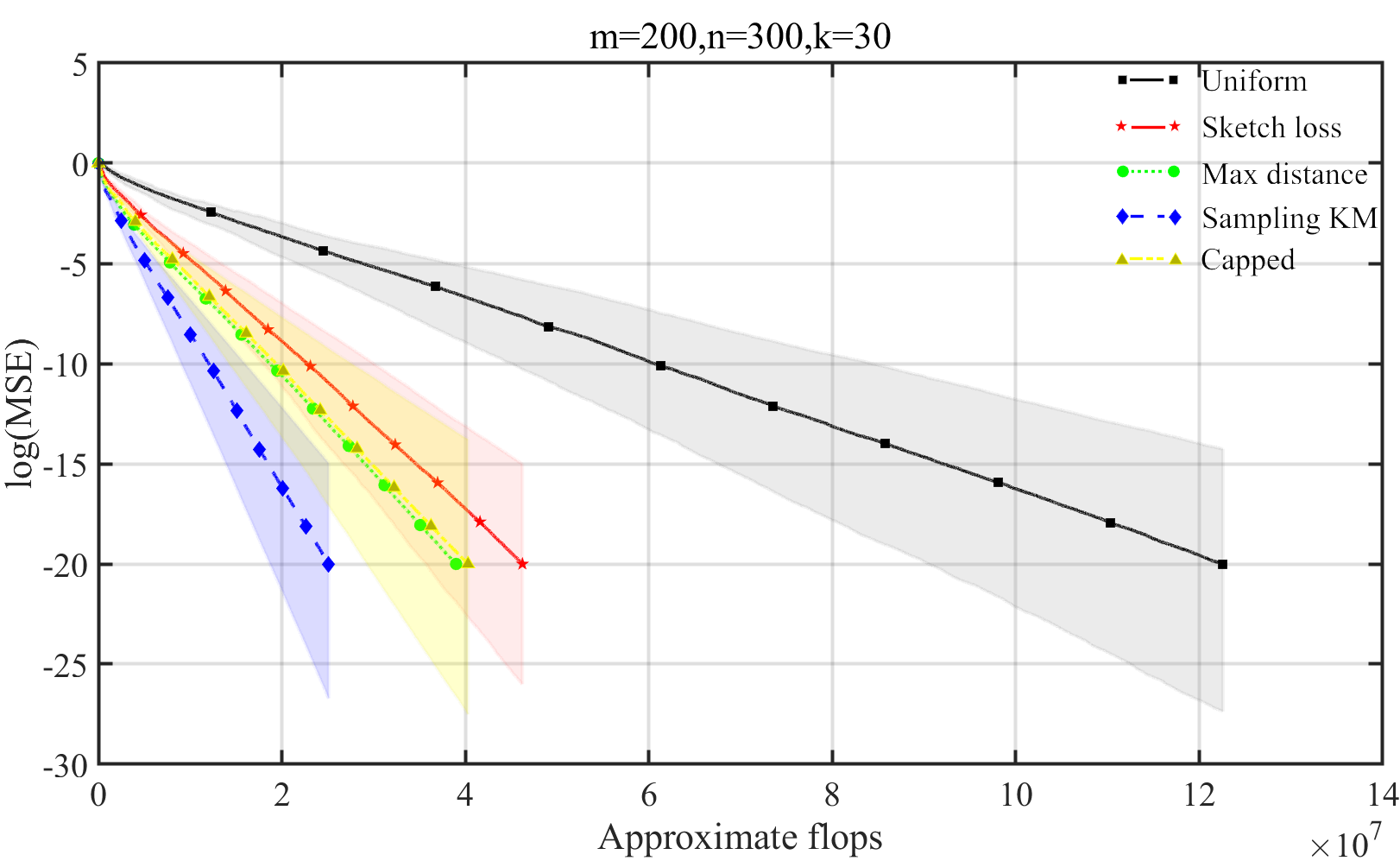}
		\caption{}
	\end{subfigure}
\caption{A comparison between different sampling strategies for sparse Kaczmarz methods. MSE are averaged over 100 trials and are plotted against the iteration and corresponding approximate
	flops aggregated over the computations respectively. Subplots on the first row show convergence for underdetermined systems, and those on the second column show the convergence on an overdetermined systems. Thick line
	shows median over 100 trials, light area is between min and max, darker area indicate 25th and 75th quantile.}
\label{figure1}
\end{figure}

\begin{figure}
		\begin{subfigure}{0.5\textwidth}
		\includegraphics[width=1\textwidth]{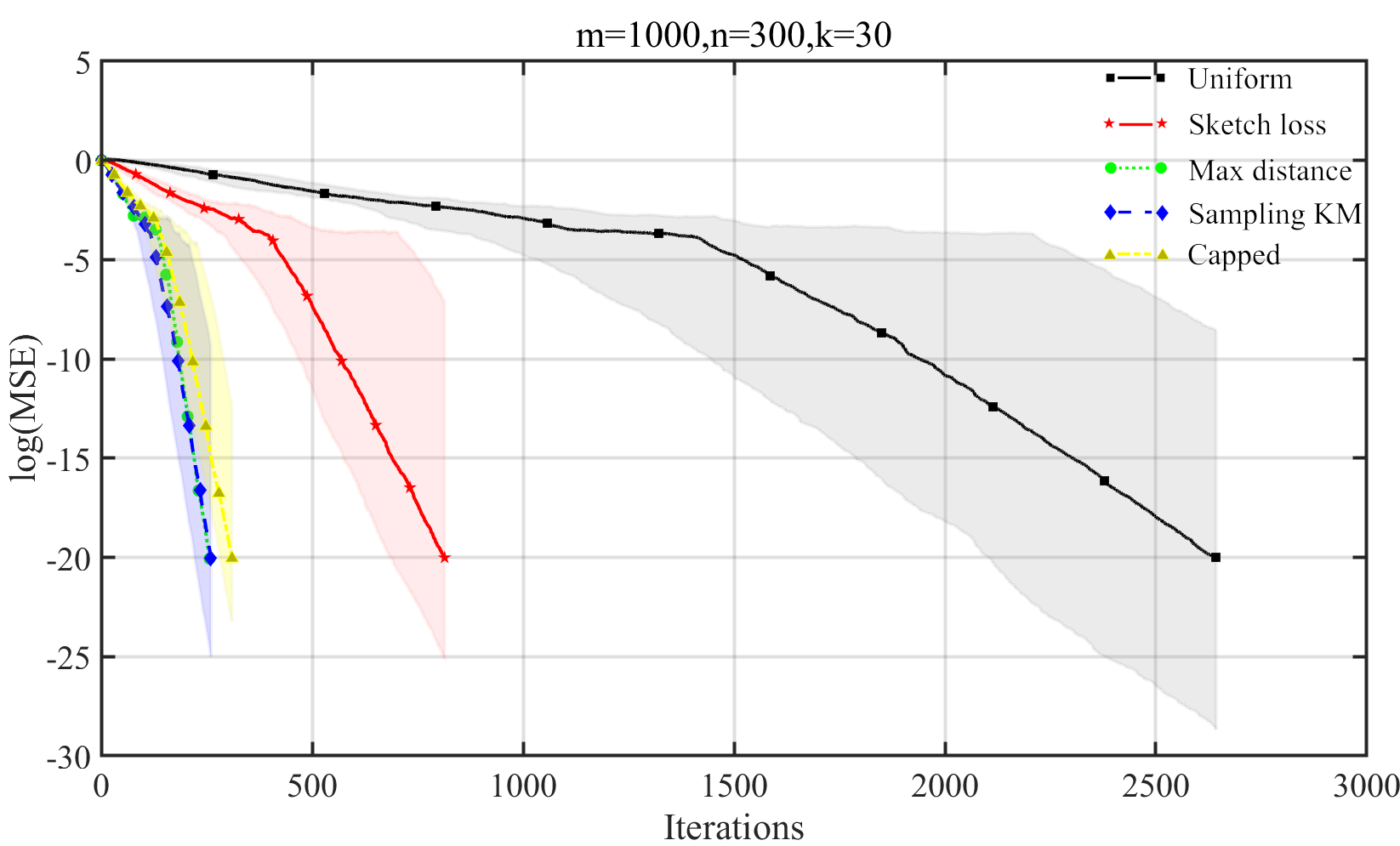}
		\caption{}
	\end{subfigure}
	\begin{subfigure}{0.5\textwidth}
		\includegraphics[width=1\textwidth]{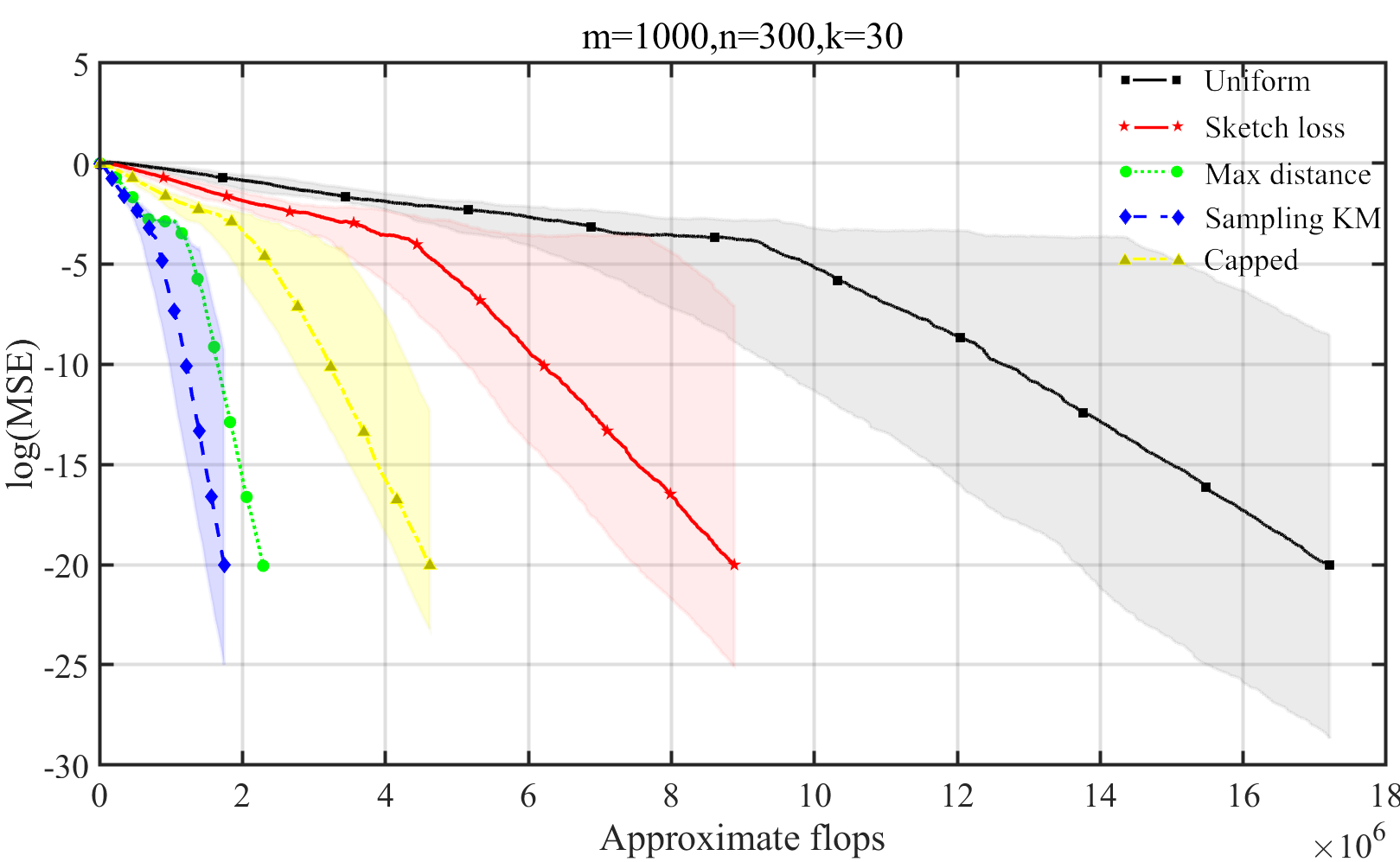}
		\caption{}
	\end{subfigure}
	\begin{subfigure}{0.5\textwidth}
		\includegraphics[width=1\textwidth]{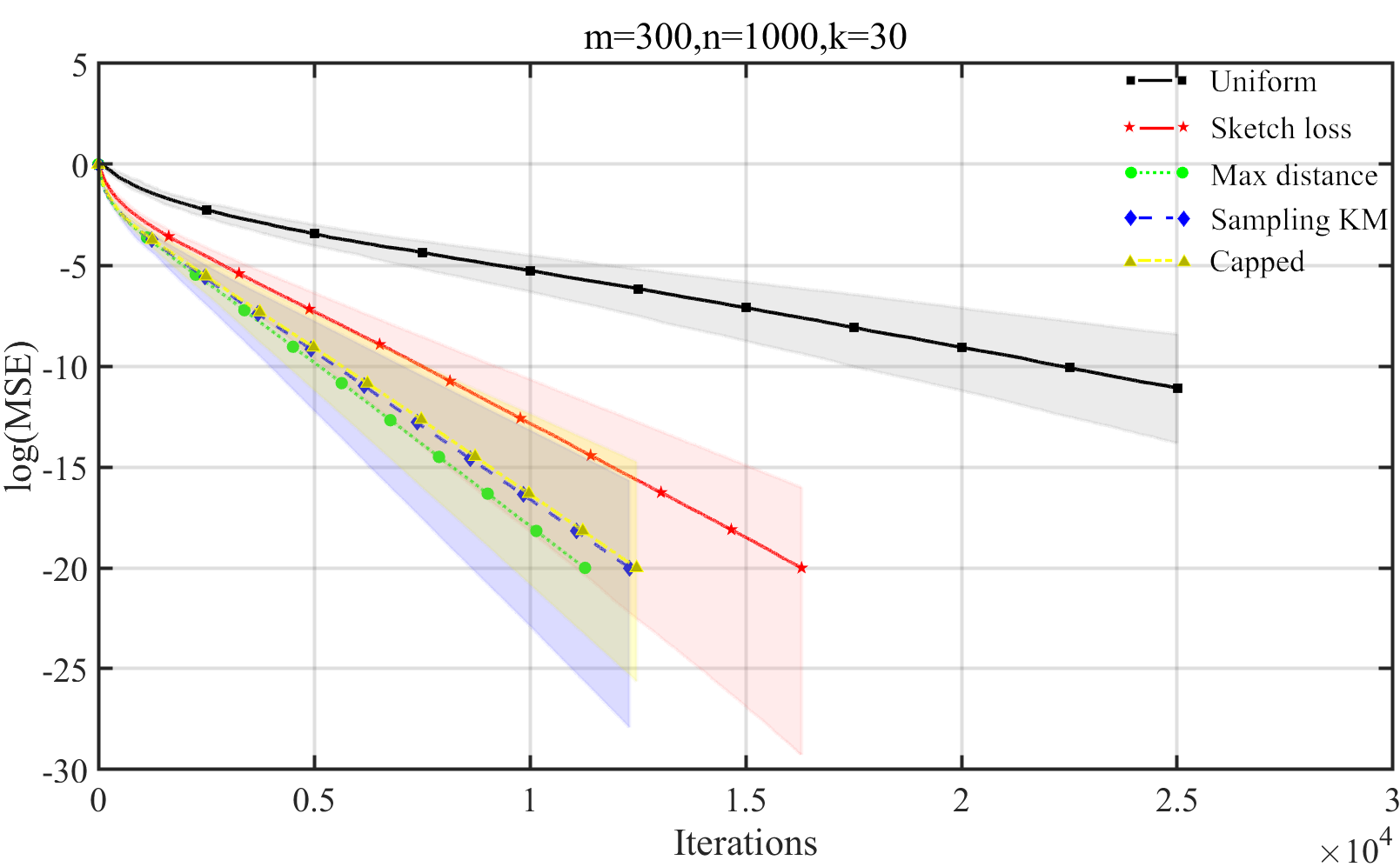}
		\caption{}
	\end{subfigure}
	\begin{subfigure}{0.5\textwidth}
		\includegraphics[width=1\textwidth]{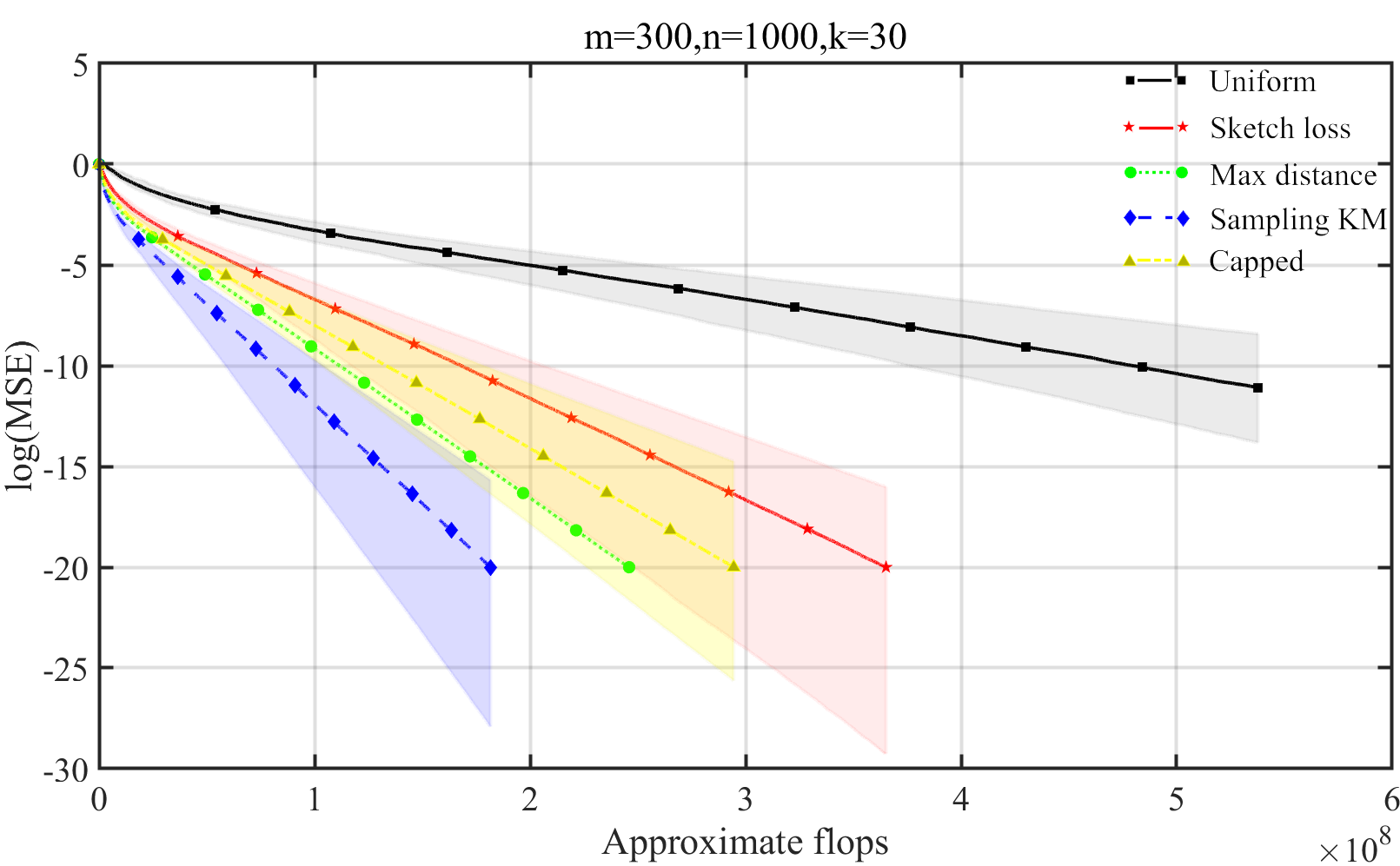}
		\caption{}
	\end{subfigure}
\caption{A comparison between different sampling strategies for sparse Kaczmarz methods. MSE are averaged over 100 trials and are plotted against the iteration and corresponding approximate
	flops aggregated over the computations respectively. Subplots on the first row show convergence for underdetermined systems, and those on the second column show the convergence on an overdetermined systems. Thick line
	shows median over 100 trials, light area is between min and max, darker area indicate 25th and 75th quantile.}
\label{figure2}
\end{figure}
For the over-determined situation, from the results in Figure \ref{figure1} and Figure \ref{figure2} we can find that the method equipped with the sampling Kaczmarz-Motzkin rule achieve a best performance,  followed by the max-distance sampling rule, the capped rule, proportional to the sketched loss rule, and the uniform rule. For the under-determined situation, the method using the sampling Kaczmarz-Motzkin rule also achieves the best performance, the max-distance sampling rule and capped sampling rule have a similar performance, next is the proportional to the sketched loss rule, and the last is the uniform sampling rule. The sparse Kaczmarz method using these sampling rules which has a larger convergence rate shown in section 5 usually has a faster convergence speed. But the method equipped with the sampling kaczmarz-Motzkin rule usually achieve the best performance.

\subsection{Computational cost per iteration}
In \cite{Gower2019}, it has completely analyzed the computational costs of each sampling rule. Combining with the cost $12n+nln(n)$ for solving the minimization problem \eqref{subo} and the cost $5n$ for soft thresholding in each iteration\cite{2016Linear}, we summarize the computational costs for the randomized sparse Kaczmarz method with different rules in Table \ref{costs}. Note that, for all the adaptive sampling rule methods, the method equipped with the capped sampling rule requires the most flops in each iteration.

In the experiments, we record the number of flops required for each sampling rule to make the MSE decrease, the results are displayed in Figure \ref{figure1} and Figure \ref{figure2}. We can find that although the adaptive methods demand more computational burdens than non-adaptive method in each iteration, these adaptive methods still need fewer operational flops to achieve a lower error. Specially, the sparse kaczmarz method using the sampling Kaczmarz-Motzkin rule demands the least computational cost among all adaptive sampling rules.
\begin{table}
	\caption{Summary of the computational costs for the sparse Kaczmarz method using different sampling strategies, where $m,n$ are numbers of the row and columns of the matrix,  $\beta_k$ is the sampling size.}\label{costs}
	\centering
	\begin{tabular}{|c|c|c|}
		\hline Sampling Strategy & Flops & Rate Bound Shown In\\
		\hline {Uniform} & {$21n+nln(n) $} & {Theorem $7$\cite{2016Linear}}\\
		
		\hline{$p_{i} \propto\left\|A_{i:}\right\|_{2}^{2}$} &{$21n+nln(n) $} &{Theorem 7\cite{2016Linear} }\\
		\hline {Max-distance} & {$m+17n+nln(n)$} & {Section 4.2.1} \\
		\hline{\text{Sketched loss}} & {$2m+17n+nln(n)$} &{Section 4.2.2}\\
		\hline {Capped} & {$5m+17n+nln(n)$}&{Section 4.2.3}\\
		\hline Sampling & \multirow{2}{*}{$\beta_k+17n+nln(n)$} & \multirow{2}{*}{Theorem 2\cite{Yuan2021}}\\
		Kaczmarz-Motzkin&&\\
		\hline
	\end{tabular}
\end{table}

\subsection{Phantom picture}
In this test, we will study an academic tomography problem to test the performance of sparse Kaczmarz method utilizing different sampling rules. As it is well known, the underlying model in this experiment consists of straight X-rays which penetrate the object, afterwards the damping is recorded. This physical process can be formulated as a linear equation  model $Ax=b$.
\begin{figure}
	\begin{subfigure}{0.5\textwidth}
		\includegraphics[width=1\textwidth]{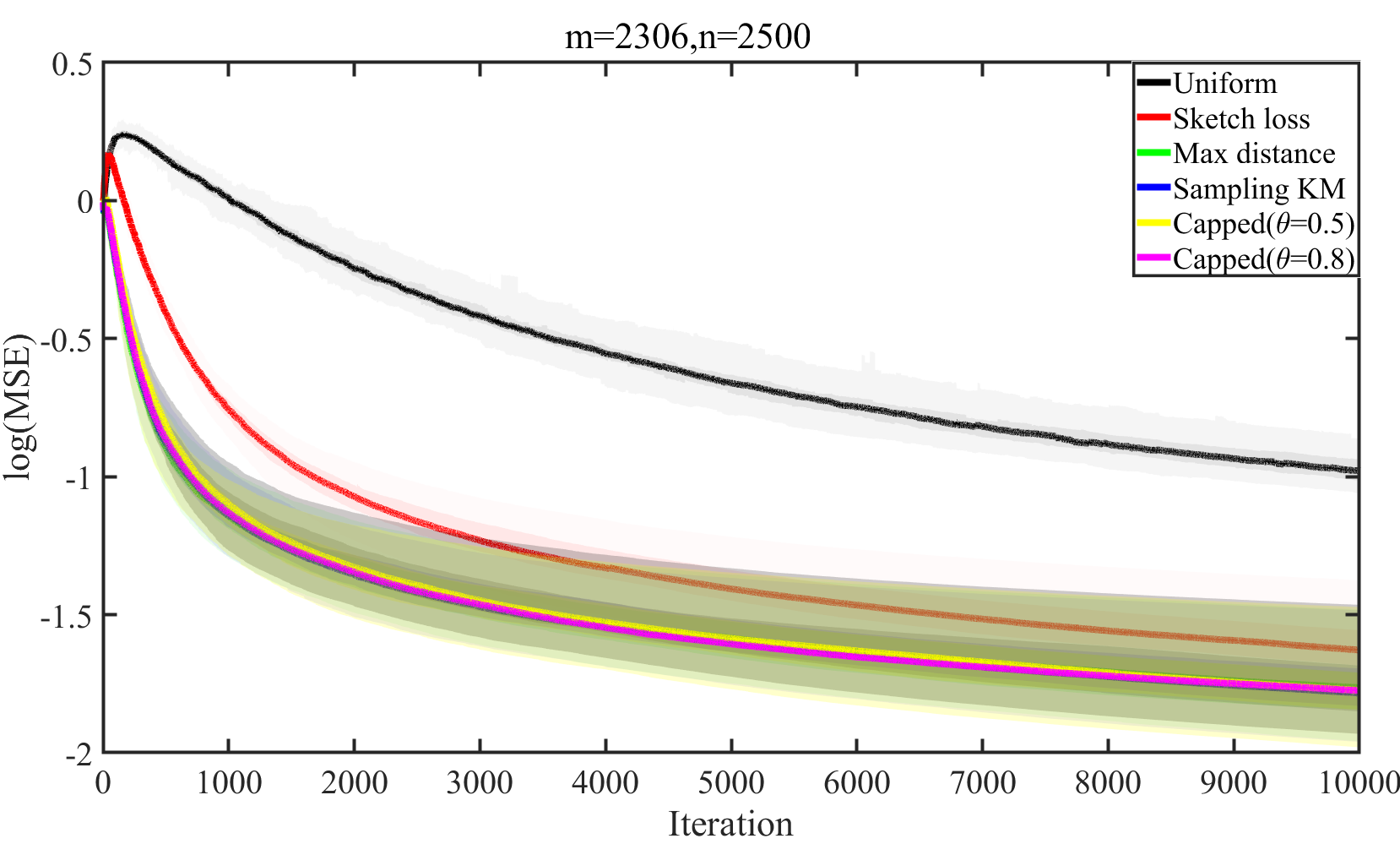}
		\caption{}
	\end{subfigure}
	\begin{subfigure}{0.5\textwidth}
		\includegraphics[width=1\textwidth]{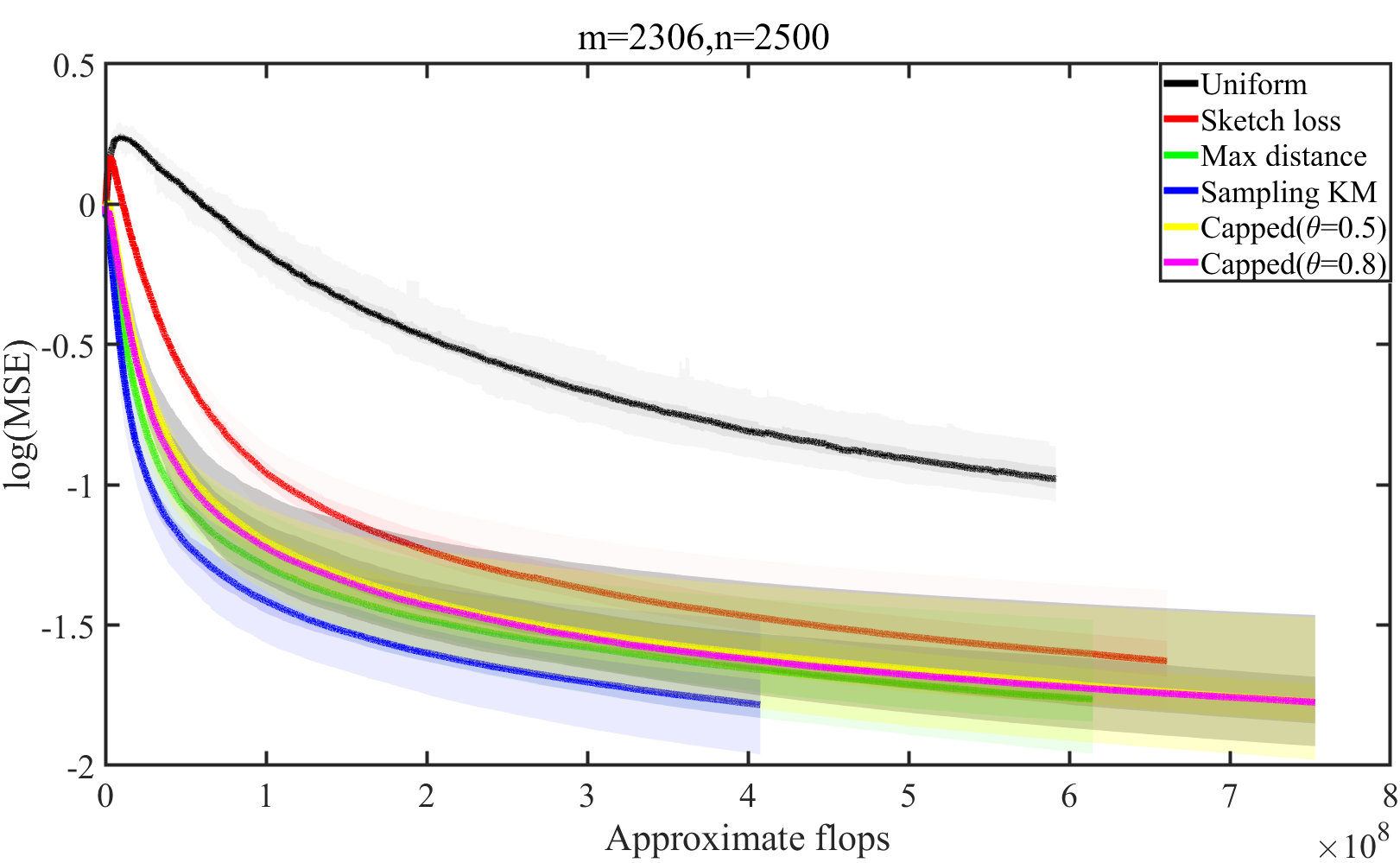}
		\caption{}
	\end{subfigure}
\caption{A comparison between different selection strategies for the randomized sparse Kaczmarz methods. Squared error norms were averaged over 100 trials and are plotted against the approximate
flops aggregated over the computations that occur at each iteration. Confidence intervals indicate the middle
95% performance. Subplots on the left show convergence for underdetermined systems, while those on the
right show the convergence on an over-determined systems.}
\label{figure3}
\end{figure}

%The object domain is the square $[-N/2, N/2]\times[-N/2, N/ 2]$.The detector has $p$ pixels, and the assumption is that each detector pixel is hit by a single X-ray. Moreover, the source-detector pair is rotated around the object, and measurements are recorded for $N_\theta$ angles $\theta_1,\theta_2,\cdots,\theta_N$. Hence, the number of data is $m = pN_\theta$. The Paralleltomo will be considered in this test where the source is placed infinitely far from a flat detector with equal spacing between the pixels (which is a very good approximation to the situation at synchrotron facilities) and the p rays are therefore parallel. In the test, the number of rays is 75, the projections angles are $1^{\circ},6^{\circ},\cdots,178^{\circ}$

%\begin{enumerate}
%	\item \textbf{Paralleltomo}. The source is placed infinitely far from a flat detector with equal spacing between the pixels (which is a very good approximation to the situation at synchrotron facilities) and the p rays are therefore parallel. In the test, the number of rays is 75, the projections angles are $1^{\circ},6^{\circ},\cdots,178^{\circ}$
%%	\item \textbf{fancurvedtomo}. The source is located at distance $RN$ from the object¡¯s center. The detector is curved such that there is an equal angular span between each ray.
%%	\item \textbf{fanlineartomo}. The source is also located at distance RN from the object¡¯s center. The detector is linear so that there is an uneven angular span between each ray.
%\end{enumerate}

AIRtools toolbox \cite{2017AIR} is utilized to generate the matrix $A$. In this test, $n=2500$ and $m=2049$. The image of interest is shepplogan shown in Figure \ref{imagedata}(a), which is sparse. Thus we can apply sparse kaczmarz method to recover it from dump. In this test, we not only compare the convergence rate and computational cost of sparse Kaczmarz method utilizing different sampling rules, but also the PSNR (Peak Signal to Noise Ratio) of the recovered images. The results are shown in Figure \ref{figure3} and Figure \ref{imagedata}.
\begin{figure}
		\begin{subfigure}{0.33\textwidth}
		\centering
		\includegraphics[width=1\textwidth]{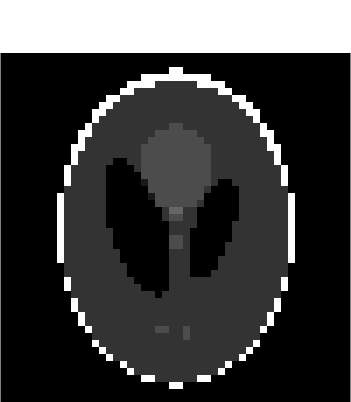}
		\caption{Ground Truth.}
	\end{subfigure}
	\begin{subfigure}{0.33\textwidth}
		\centering
		\includegraphics[width=1\textwidth]{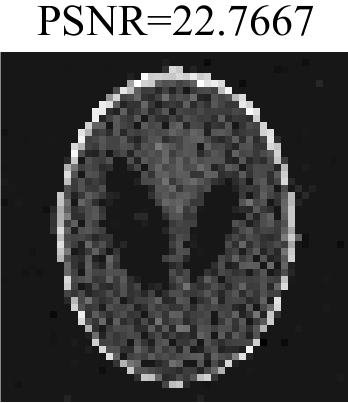}
		\caption{Uniform.}
	\end{subfigure}
	\begin{subfigure}{0.33\textwidth}
		\centering
		\includegraphics[width=1\textwidth]{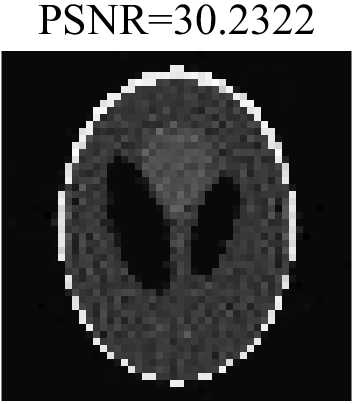}
		\caption{Sketched loss.}
	\end{subfigure}
	\begin{subfigure}{0.33\textwidth}
		\centering
		\includegraphics[width=1\textwidth]{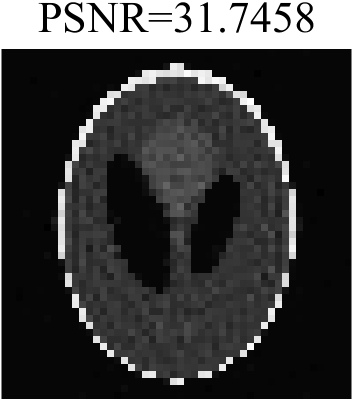}
		\caption{Max distance.}
	\end{subfigure}
		\begin{subfigure}{0.33\textwidth}
	\centering
	\includegraphics[width=1\textwidth]{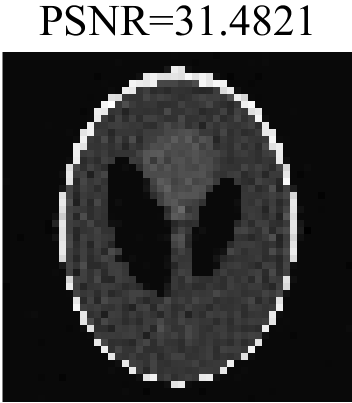}
	\caption{Capped $\theta=0.5$.}
\end{subfigure}
	\begin{subfigure}{0.33\textwidth}
		\centering
		\includegraphics[width=1\textwidth]{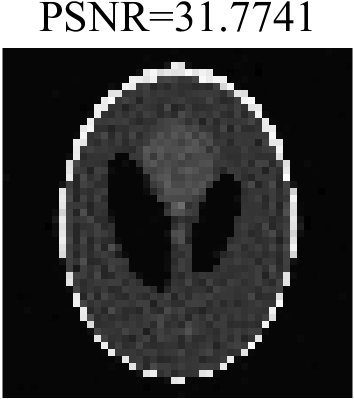}
		\caption{Sampling Kaczmarz-Motzkin.}
	\end{subfigure}
	\caption{Experimental results. (a) is the ground truth. (b) is the result recovered by the randomized sparse Kaczmarz method using the uniform sampling strategy. (c) is the result recovered by the randomized sparse Kaczmarz method using the sketched loss sampling strategy. (d) is the result recovered by the randomized sparse Kaczmarz method using the Max distance strategy. (e) is the result recovered by the randomized sparse Kaczmarz method using the sampling Kaczmarz-Motzkin strategy.(f)are the results recovered by the randomized sparse Kaczmarz method using the capped sampling strategy with parameters $\theta=0.5$.}\label{imagedata}
\end{figure}

From Figure \ref{imagedata}, we can find that the method using the sampling Kaczmarz-Motzkin rule has advantages over other sampling rules. It can recover the image with the largest PSNR in a given step. At the same time, from Figure \ref{figure3}, we can still find that the method equipped with the sampling Kaczmarz-Motzkin rule still achieve the lowest MSE with least computational costs. Although the matrix is of rank deficiency, by utilizing the sparsity structure of image, we can still find the ground truth. It will shed light on more applications to utilize prior information to recover the signal of interest with fewer measurements.

\section{Conclusion}
In this paper, we proposed the adaptively sketched Bregman projection method to solve linear systems, flexible in finding solutions with certain structures. As a mathematical framework, the proposed method, equipped with adaptive sampling rules, is general enough to cover the randomized sparse Kaczmarz method and the adaptive sketch-and-project method. Theoretically, we showed the linear convergence of SBP with detailed rates. Numerically, we reported experiment results to demonstrate that the (sparse) Kaczmarz method equipped with the sampling Kaczmarz-Motzkin rule demands the least computational cost to achieve the lowest error.

In the future, we would like to extend the proposed SBP method to deal with matrix equations. 
\section*{Acknowledgments}
The paper is granted by the National Natural Science Foundation of China:11971480, 61977065, and the Hunan Province excellent youngster Foundation:2020JJ3038. Education Department of Hunan:CX20190016. This work does not have any conflicts of interest.

\bibliographystyle{unsrt}
\small
\bibliography{1.bib}

\end{document}